\title{Semicoverings: a generalization of covering space theory}
\author{Jeremy Brazas}
\DeclareMathAlphabet{\mathpzc}{OT1}{pzc}{m}{it}
\newcommand{\sus}{\Sigma(X_{+})}
\newcommand{\ra}{\to}
\newcommand{\bs}{\backslash}
\newcommand{\sG}{\mathcal{G}}
\newcommand{\sH}{\mathcal{H}}
\newcommand{\txf}{\tilde{X}_{F}}
\newcommand{\pf}{p_{F}}
\newcommand{\tf}{\Theta_{F}}
\newcommand{\pitx}{\pi_{1}^{\tau}(X,x_0)}
\newcommand{\px}{\mathcal{P}X}
\newcommand{\py}{\mathcal{P}Y}
\newcommand{\pxxo}{(\mathcal{P}X)_{x_0}}
\newcommand{\pyyo}{(\mathcal{P}Y)_{y_0}}
\newcommand{\pxx}{(\mathcal{P}X)_{x}}
\newcommand{\pyy}{(\mathcal{P}Y)_{y}}
\newcommand{\qtopgrpd}{\mathbf{qTopGrpd}}
\newcommand{\topgrpd}{\mathbf{TopGrpd}}
\newcommand{\qtg}{\mathbf{qTopGrp}}
\newcommand{\set}{\mathbf{Set}}
\newcommand{\tg}{\mathbf{TopGrp}}
\newcommand{\spaces}{\mathbf{Top}}
\newcommand{\grp}{\mathbf{Grp}}
\newcommand{\bspaces}{\mathbf{Top_{\ast}}}
\newcommand{\stc}{wep-connected}
\newcommand{\estc}{locally wep-connected}
\newcommand{\ee}{locally well-ended}
\newcommand{\we}{well-ended}
\newcommand{\wt}{well-targeted}
\newcommand{\et}{locally well-targeted}
\newcommand{\pit}{\pi_{1}^{\tau}}
\newcommand{\scovx}{\mathbf{SCov}(X)}
\newcommand{\cscovx}{\mathbf{SCov_{0}}(X)}
\newcommand{\covx}{\mathbf{Cov}(X)}
\newcommand{\ccovx}{\mathbf{Cov_{0}}(X)}
\newcommand{\pyx}{p\colon Y\to X}
\newtheorem{theorem}{Theorem}[section]
\newtheorem{lemma}[theorem]{Lemma}
\newtheorem{proposition}[theorem]{Proposition}
\newtheorem{corollary}[theorem]{Corollary}
\newtheorem{definition}[theorem]{Definition}
\newtheorem{example}[theorem]{Example}
\newtheorem{remark}[theorem]{Remark}
\newtheorem{approximation}[theorem]{Approximation of $\tau(\mathcal{G})$}
\newtheorem{pathlifting}[theorem]{Canonical lifts of paths}
\newtheorem{homotopylifting}[theorem]{Canonical lifts of homotopies}
\newtheorem{basis}[theorem]{A basis for the topology of $\txf$}
\begin{document}
\maketitle
\begin{abstract}
The fundamental groupoid of a space becomes enriched over the category of topological spaces when the hom-sets are endowed with topologies intimately related to universal constructions of topological groups. This paper is devoted to a generalization of classical covering theory in the context of this construction.
\end{abstract}
\section{Introduction}
This paper is the sequel to \cite{Br10.2} where the fundamental group of a space is endowed with the finest group topology such that the natural map from the loop space identifying homotopy classes is continuous. The resulting homotopy invariant $\pi_{1}^{\tau}$ takes values in the category of topological groups and can distinguish spaces with isomorphic fundamental groups. Additionally, there is a natural connection to free topological groups and free topological products unlikely to arise from the more historical shape theoretic approach to spaces with complicated local structure. The present paper includes the theory of \textit{semicoverings}, a generalization of covering space theory in the context of a topologically enriched fundamental groupoid $\pi^{\tau}$ whose vertex groups are given by $\pi_{1}^{\tau}$.

Generalizations of the notion of covering map and extensions of covering theoretic techniques to spaces beyond those in the classical theory (path connected, locally path connected and semilocally 1-connected spaces) have appeared in many different contexts \cite{BS98,BDLM08,FZ07,Fox74,HP98,Lub62}. We emphasize the assessment in the introduction of \cite{FZ07} that the properties one should require of a ``generalized covering map" depend on the intended application. A semicovering map $\pyx$ is a local homeomorphism such that whenever $p(y)=x$ and $f$ is a path or homotopy of paths starting at $x$, there is a unique lift $\tilde{f}_{y}$ of $f$ starting at $y$. Moreover, we demand that each lifting assignment $f\mapsto \tilde{f}_{y}$ for paths and homotopies is continuous with respect to the compact-open topology on function spaces. Except for local triviality, semicoverings enjoy nearly all of the important properties of coverings; in this sense the notion of semicovering is quite close to the notion of covering. The intentions of this generalization are to further study the topology on $\pi_{1}$ introduced in \cite{Br10.2}, identify a suitable classification of generalized coverings applicable to spaces with non-trivial local structure, and develop geometric tools for studying topological groups much like those used in applications of covering space theory to group theory \cite[Ch. 14]{Hig71}.

Covering-type theories applicable to non-locally path connected spaces are less prevalent due to the existence of troubling examples such as Zeeman's example \cite[Example 6.6.14]{HW60} of a planar set which admits non-equivalent coverings that, under the usual classification, do not give the same conjugacy class of subgroup of the fundamental group. Lubkin's theory of coverings \cite{Lub62} overcomes this obstacle using a more general notion of ``space" and ``group." The authors of \cite{ATHP99,HP98} achieve a quite general theory by attaching extra data (equivalence classes of locally constant presheaves) to their projection maps and providing a classification in terms of the fundamental pro-groupoid. Conveniently, semicoverings of a space are genuine maps of topological spaces and are classified for a broad class of topological spaces. In particular, the full theory of semicoverings applies to the class of locally wep-connected spaces defined in Section \ref{localpropertiesofendpointsofpaths}. This class contains all locally path connected spaces and ``enough" non-locally path connected spaces. By ``enough" we mean that any topological group is realized as the fundamental group of a locally wep-connected space and for any space $X$, $\pitx$ may be approximated up to induced isomorphism on $\pit$ by a locally wep-connected space. Thus for application of semicoverings to the theory of topological groups, it suffices to study locally wep-connected spaces.

Recall that for a path connected, locally path connected, and semilocally 1-connected space $X$, the category $\covx$ of coverings of $X$ is naturally equivalent to the category $\mathbf{CovMor}(\pi X)$ of covering morphisms of the fundamental groupoid $\pi X$ \cite[10.6.1]{Brown06}. Additionally, $\mathbf{CovMor}(\pi X)$ is equivalent to the functor category $\mathbf{Func}(\pi X,\set)$. The resulting equivalence $\covx\ra \mathbf{Func}(\pi X,\set)$ is an isomorphism of categories given by taking a covering to its monodromy. The classification of semicoverings is also stated as an equivalence of categories but requires the language of enriched category theory \cite{KellyEnriched}.

A $\spaces$-category is a category $\mathcal{C}$ enriched over $\spaces$, the category of topological spaces, in the sense that each hom-set is equipped with a topology such that all composition maps are continuous. For instance, we view $\set$ as a $\spaces$-category by viewing each set as a discrete space and giving sets of functions the topology of point-wise convergence. When the underlying category of $\mathcal{C}$ is a groupoid and each inversion map $\mathcal{C}(a,b)\ra \mathcal{C}(b,a)$ is continuous, $\mathcal{C}$ is a $\spaces$-groupoid. In section \ref{topgroupoidsection} we construct, for each space $X$, a $\spaces$-groupoid $\pit X$ whose underlying groupoid is the fundamental groupoid $\pi X$. Given $\spaces$-categories $\mathcal{A}$ and $\mathcal{B}$, a $\spaces$-functor is a functor $F\colon\mathcal{A}\to \mathcal{B}$ such that each function $\mathcal{A}(a_1,a_2)\ra \mathcal{B}(F(a_1),F(a_2))$ is continuous. We say $F$ is \textit{open} if each map $\mathcal{A}(a_1,a_2)\ra \mathcal{B}(F(a_1),F(a_2))$ is also open. A $\spaces$-natural transformation of $\spaces$-functors is a natural transformation of the underlying functors. The category of $\spaces$-functors $\mathcal{A}\to \mathcal{B}$ and $\spaces$-natural transformations is denoted $\mathbf{TopFunc}(\mathcal{A},\mathcal{B})$.

Our main result (Theorem \ref{classificationI}) states that for any locally wep-connected space, the category $\scovx$ of semicoverings is naturally isomorphic as a category to the category of enriched functors $\pit X\ra \set$. An analogue of the equivalence $\covx\simeq \mathbf{CovMor}(\pi X)$ is obtained after one observes that $\mathbf{TopFunc}(\pi^{\tau} X,\set)$ is naturally equivalent to the category $\mathbf{OCovMor}(\pi^{\tau}X)$ of open covering morphisms $\sG\ra \pi^{\tau}X$ of $\spaces$-groupoids. This classification also restricts to a classification of connected semicoverings of $X$ in terms of continuous, transitive actions of the topological group $\pitx$ on discrete spaces. These results include, as special cases, classical covering theory, Spanier's extension to non-semilocally 1-connected spaces via ``Spanier groups" $\pi(\mathscr{U},x_0)$ \cite{Spanier66,FRVZ11}, and Fox's fundamental theorem of overlays \cite{Fox74,Koc90}.

The author thanks Maria Basterra for many helpful conversations.

\section{Continuous lifting of paths and homotopies}
In general, mapping spaces $\spaces(X,Y)$ will be given the compact-open topology generated by subbasis sets $\langle K,U\rangle=\{f|f(K)\subseteq U\}$, $K\subseteq X$ compact, $U\subseteq Y$ open. Let $\px$ denote the space of paths $\alpha\colon I=[0,1]\ra X$ and $c_{x}$ denote the constant path at $x\in X$. If $\mathscr{B}$ is a basis for the topology of $X$ which is closed under finite intersection, sets of the form $\bigcap_{j=1}^{n}\langle K_{n}^{j},U_j\rangle$ where $K_{n}^{j}=\left[\frac{j-1}{n},\frac{j}{n}\right]$ and $U_j\in \mathscr{B}$ form a convenient basis for the topology of $\px$.

For any fixed, closed subinterval $A\subseteq I$, let $T_A\colon I\ra A$ be the unique, increasing, linear homeomorphism. For a path $\alpha\in\px$, $\alpha_{A}=\alpha|_{A}\circ T_{A}\colon I\ra A\ra X$ is the restricted path of $p$ to $A$. As a convention, if $A=\{t\}\subseteq I$, let $\alpha_{A}=c_{\alpha(t)}$. Note that if $0=t_0\leq t_1\leq ...\leq t_n=1$, knowing the paths $\alpha_{[t_{i-1},t_i]}$ for $i=1,...,n$ uniquely determines $\alpha$. It is simple to describe concatenations of paths with this notation: If $\alpha_1,\alpha_2,...,\alpha_n\in\px$ such that $\alpha_j(1)=\alpha_{j+1}(0)$ for each $j=1,...,n-1$, the \textit{n-fold concatenation} of this sequence is the unique path $\beta=\alpha_1\ast \alpha_2\ast \dots\ast \alpha_n$ such that $\beta_{K_{n}^{j}}=\alpha_j$ for each $j=1,...,n$. It is an elementary fact of the compact-open topology that concatenation $\px\times_{X}\px=\{(\alpha,\beta)|\alpha(1)=\beta(0)\}\ra \px$, $(\alpha,\beta)\mapsto \alpha\ast\beta$ is continuous. If $\alpha\in \px$, then $\alpha^{-1}(t)=\alpha(1-t)$ is the \textit{reverse} of $\alpha$ and for a set $A\subseteq \px$, $A^{-1}=\{\alpha^{-1}|\alpha\in A\}$. The operation $\alpha\mapsto \alpha^{-1}$ is a self-homeomorphism of $\px$.

If $x\in X$, let $\pxx=\{\alpha\in \px|\alpha(0)=x\}$, $(\mathcal{P}X)^{y}=\{\alpha\in \px|\alpha(1)=y\}$, and $\px(x,y)=\pxx\cap (\px)^{y}$ be subspaces of $\pxx$. In notation, we do not distinguish a neighborhood from being an open set in $\px$ or any of its subspaces but rather leave this to context.  We also use the notation $\Omega(X,x)=\px(x,x)$. Each of these constructions gives either a functor $\spaces\ra \spaces$ or $\bspaces\ra \bspaces$. For instance, $(X,x)\mapsto (\pxx,c_x)$ is a functor which is $\mathcal{P}f(\alpha)=f\circ \alpha$ on morphisms.
\begin{definition} \emph{
A map $\pyx$ has \textit{continuous lifting of paths} if $\mathcal{P}p\colon\pyy\ra (\px)_{p(y)}$ is a homeomorphism for each $y\in Y$. }
\end{definition}

Certainly every map with continuous lifting of paths has unique path lifting since this property is equivalent to the injectivity of $\mathcal{P}p$ in the above definition. Consequently, a map with continuous lifting of paths has the unique lifting property with respect to all path connected spaces \cite[2.2 Lemma 4]{Spanier66}. The condition that $\mathcal{P}p$ be a homeomorphism is much stronger than the existence and uniqueness of lifts of paths since each inverse $L_p\colon(\mathcal{P}X)_{p(y)}\ra \pyy$, which we refer to as the \textit{lifting homeomorphism}, taking a path $\alpha$ to the unique lift $\tilde{\alpha}_{y}$ starting at $y$ is required to be continuous.

Let $\Delta_2=\left\{(s,t)\in I^{2}|s+t\leq 1\right\}$ be the 2-simplex with edges $e_1,e_2,e_3$ opposite to vertices $(1,0),(0,1),(0,0)$ respectively. Let $\partial_j\colon e_j\hookrightarrow\Delta_2$ denote each inclusion and $(\Phi X)_{x}$ be the space of relative maps $(\Delta_2,e_{1})\to(X,\{x\})$. If $\alpha,\beta \in \pxx$ such that $\alpha(1)=\beta(1)$, then $\alpha\simeq \beta$ (rel. endpoints) if and only if there is a $\phi\in (\Phi X)_{x}$ such that $\alpha$ is $\xymatrix{I \ar[r]^-{\partial_{2}} & \Delta_2 \ar[r]^-{\phi} & X}$ and $\beta$ is $\xymatrix{I \ar[r]^-{\cong} & e_{3} \ar[r]^-{\partial_{3}} & \Delta_2 \ar[r]^-{\phi} & X}$ (the first map is the inverse of the homeomorphic projection of $e_3$ onto $e_2=I$). Thus elements of $(\Phi X)_{x}$ are homotopies of paths. Just as with path spaces, $\Phi\colon\bspaces\ra \bspaces$ is a functor which is $\Phi f(\phi)=f\circ \phi$ on morphisms.
\begin{definition}  \emph{
A map $\pyx$ has \textit{continuous lifting of homotopies} if $\Phi p\colon(\Phi Y)_{y}\ra (\Phi X)_{p(y)}$ is a homeomorphism for each $y\in Y$.}
\end{definition}
Let $\sG$ be a groupoid with source and target maps $s,t\colon\sG\ra Ob(\sG)$. The star of $\sG$ at $x\in Ob(\sG)$ is $\sG_{x}=\{g\in \sG|s(g)=x\}$. Additionally, $\sG^{y}=\{g\in \sG|t(g)=y\}$, $\sG(x,y)=\sG_{x}\cap \sG^{y}$ and $\sG(x)=\sG(x,x)$. If $g\in \sG(x,y)$, right and left multiplication by $g$ is $\rho_{g}\colon\sG(w,x)\ra \sG(w,y)$ and $\lambda_{g}\colon\sG(y,z)\ra \sG(x,z)$ respectively. A \textit{covering morphism} is a functor $F\colon\sG\ra \sG '$ of groupoids such that each function $\sG_{x}\ra \sG '_{F(x)}$, $g\mapsto Fg$ is a bijection. If $g\in \sG '_{F(x)}$, then $\tilde{g}_{x}$ denotes the unique $\tilde{g}_{x}\in \sG_{x}$ such that $F(\tilde{g}_{x})=g$. We refer to \cite{Brown06} for the theory of covering morphisms, groupoid actions, and the fundamental groupoid. The following lemma is straightforward given the above definitions.
\begin{lemma} \label{contliftingprops}
Let $\pyx$ be a map with continuous lifting of paths and homotopies.
\begin{enumerate}
\item $p$ induces a covering morphism $\pi p\colon\pi Y\ra \pi X$ of fundamental groupoids.
\item $Y$ is a right $\pi X$-set via the action $p^{-1}(x_1)\times \pi X(x_1,x_2)\ra p^{-1}(x_2)$, $(y,[\alpha])\mapsto y\cdot [\alpha]=\tilde{\alpha}_{y}(1)$.
\item If $p(y_i)=x_i$, $i=1,2$ and $\beta\in \px(x_1,x_2)$, then $[\beta]\in Im \left(\pi p\colon\pi Y(y_1,y_2)\ra \pi X(x_1,x_2)\right)$ if and only if $y_{1}\cdot[\beta]=y_2$.
\end{enumerate}
\end{lemma}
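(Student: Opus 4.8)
The plan is to extract from the hypotheses only the \emph{existence} and \emph{uniqueness} of lifts of paths and homotopies; the continuity of the lifting assignments, which is the extra content of the definitions, plays no role in this lemma. Bijectivity of $\mathcal{P}p\colon (\py)_{y}\ra (\px)_{p(y)}$ guarantees that every path $\alpha$ in $X$ with $\alpha(0)=p(y)$ has a unique lift $\tilde{\alpha}_{y}$ beginning at $y$, and bijectivity of $\Phi p\colon (\Phi Y)_{y}\ra (\Phi X)_{p(y)}$ gives the analogous statement for homotopies of paths based at $p(y)$. The induced functor $\pi p$ sends $[\gamma]\mapsto[p\circ\gamma]$ and is well-defined since the continuous map $p$ carries homotopic paths to homotopic paths.

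For (1) I would check that each star map $(\piy)_{y}\ra (\pix)_{p(y)}$ is a bijection. Surjectivity is immediate: given $[\alpha]\in (\pix)_{p(y)}$, the lift $\tilde{\alpha}_{y}$ satisfies $\pi p[\tilde{\alpha}_{y}]=[\alpha]$. For injectivity, suppose $\gamma_1,\gamma_2\in(\py)_{y}$ with $[p\circ\gamma_1]=[p\circ\gamma_2]$; a homotopy of paths witnessing this equality is an element $\phi\in(\Phi X)_{p(y)}$, which lifts uniquely to some $\tilde{\phi}\in(\Phi Y)_{y}$. The two boundary paths of $\tilde{\phi}$ begin at $y$ and project under $p$ to $p\circ\gamma_1$ and $p\circ\gamma_2$, so by uniqueness of path lifting they must be $\gamma_1$ and $\gamma_2$ respectively. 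Hence $\tilde{\phi}$ is a homotopy rel endpoints from $\gamma_1$ to $\gamma_2$, giving $[\gamma_1]=[\gamma_2]$ (and in particular forcing the common target), so the star map is injective.

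For (2) the value $y\cdot[\alpha]=\tilde{\alpha}_{y}(1)$ lies in $p^{-1}(x_2)$ because $p(\tilde{\alpha}_{y}(1))=\alpha(1)=x_2$. Independence of the chosen representative of $[\alpha]$ follows from the homotopy-lifting argument used in (1): a homotopy rel endpoints between $\alpha$ and $\alpha'$ lifts to a homotopy rel endpoints between $\tilde{\alpha}_{y}$ and $\widetilde{\alpha'}_{y}$, forcing $\tilde{\alpha}_{y}(1)=\widetilde{\alpha'}_{y}(1)$. The identity axiom holds because the constant path $c_{p(y)}$ lifts uniquely to $c_{y}$, so $y\cdot[c_{p(y)}]=y$; and the compatibility $(y\cdot[\alpha])\cdot[\beta]=y\cdot[\alpha\ast\beta]$ follows from the fact that the lift of a concatenation $\alpha\ast\beta$ at $y$ is the concatenation of $\tilde{\alpha}_{y}$ with the lift of $\beta$ at $\tilde{\alpha}_{y}(1)$, again by uniqueness of path lifting.

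For (3) I would argue both implications directly. If $[\beta]=\pi p[\gamma]$ for some $[\gamma]\in\piy(y_1,y_2)$, then $\gamma$ is the unique lift of $p\circ\gamma$ at $y_1$ and $p\circ\gamma\simeq\beta$, so by the well-definedness established in (2) we get $y_1\cdot[\beta]=\tilde{\beta}_{y_1}(1)=\gamma(1)=y_2$. Conversely, if $y_1\cdot[\beta]=\tilde{\beta}_{y_1}(1)=y_2$, then $\tilde{\beta}_{y_1}$ determines a class in $\piy(y_1,y_2)$ with $\pi p[\tilde{\beta}_{y_1}]=[\beta]$. I do not anticipate a serious obstacle, consistent with the authors' remark that the statement is straightforward; the only point demanding care throughout is the correct translation between the simplicial description of elements of $(\Phi X)_{x}$ and the two boundary paths they connect, so that uniqueness of path lifting may legitimately be invoked to identify the lifted boundary paths with $\gamma_1,\gamma_2$ (respectively $\tilde{\alpha}_{y},\widetilde{\alpha'}_{y}$).
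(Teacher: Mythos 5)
Your proof is correct, and it is precisely the verification the paper has in mind: the paper offers no written proof, stating only that the lemma is ``straightforward given the above definitions,'' and your argument fills in those details using exactly the intended ingredients (surjectivity of $\mathcal{P}p$ and $\Phi p$ for existence of lifts, injectivity for uniqueness, and the observation that a lifted element of $(\Phi Y)_{y}$ is automatically a homotopy rel endpoints because the two boundary paths of $\Delta_2$ share the vertex $(1,0)$). No gaps; in particular you correctly isolate the one delicate point, namely identifying the lifted boundary paths with $\gamma_1,\gamma_2$ via unique path lifting.
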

Thus a map with continuous lifting of paths and homotopies has monodromy in the following sense.
\begin{definition} \emph{
The \textit{monodromy} of a map $\pyx$ with continuous lifting of paths and homotopies is the functor $\mathscr{M}p\colon\pi X\ra \set$ which takes a point $x\in X$ to the fiber $p^{-1}(x)$ and a class $[\alpha]\in \pi X(x_1,x_2)$ to the function $p^{-1}(x_1)\ra p^{-1}(x_2)$, $y\mapsto y\cdot [\alpha]$. }
\end{definition}
The following useful lemma is a generalization of a well-known lifting result from covering space theory (\cite[10.5.3]{Brown06}) and illustrates precisely why the notion of continuous lifting is worth considering in covering space theory and its generalizations.
\begin{lemma} \label{generalliftinglemma}
Let $\pyx$ be a map with continuous lifting of paths and homotopies and $W$ be a space whose path components are open and such that for each $w\in W$ evaluation $ev_{1}\colon(\mathcal{P}W)_{w}\ra W$, $\beta\mapsto \beta(1)$ is quotient onto the path component of $w$. If $f\colon W\ra X$ is any map such that $\pi f\colon\pi W\ra \pi X$ lifts to a morphism $\Psi\colon\pi W\ra \pi Y$ of groupoids (i.e. $\pi p\circ \Psi= \pi f$), then $\tilde{f}=Ob(\Psi)\colon W\ra Y$ is continuous and $\Psi=\pi\tilde{f}$.
\end{lemma}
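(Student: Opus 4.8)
The plan is to first recognize that $\tilde f = Ob(\Psi)$ is a genuine set-theoretic lift of $f$, then to establish its continuity one path component at a time by exploiting the quotient evaluation hypothesis, and finally to deduce $\Psi = \pi\tilde f$ from the covering morphism property of $\pi p$. Reading the hypothesis $\pi p\circ\Psi = \pi f$ on objects gives $p\circ\tilde f = f$, so $\tilde f$ does lift $f$. Since the path components of $W$ are open, $\tilde f$ will be continuous as soon as its restriction to each path component is continuous, so I would fix $w_0\in W$ with path component $C$ and work there.

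Because $ev_1\colon(\mathcal{P}W)_{w_0}\to C$ is quotient, it suffices to prove that the composite $\tilde f\circ ev_1\colon(\mathcal{P}W)_{w_0}\to Y$ is continuous. The key step, and the part I expect to be the main obstacle, is to identify this composite with a chain of manifestly continuous maps. For $\beta\in(\mathcal{P}W)_{w_0}$ the class $[\beta]$ lies in $\pi W(w_0,\beta(1))$, so $\Psi([\beta])\in\pi Y(\tilde f(w_0),\tilde f(\beta(1)))$ while $\pi p(\Psi([\beta])) = \pi f([\beta]) = [f\circ\beta]$. Lemma \ref{contliftingprops}(3) then forces $\tilde f(w_0)\cdot[f\circ\beta] = \tilde f(\beta(1))$, and unwinding the action of Lemma \ref{contliftingprops}(2) this reads $\tilde f(\beta(1)) = ev_1\big(L_p(f\circ\beta)\big)$, where $L_p\colon(\mathcal{P}X)_{f(w_0)}\to(\mathcal{P}Y)_{\tilde f(w_0)}$ is the lifting homeomorphism. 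In short, $\tilde f\circ ev_1 = ev_1\circ L_p\circ\mathcal{P}f$ on $(\mathcal{P}W)_{w_0}$.

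Each factor in this composite is continuous: $\mathcal{P}f$ is continuous by functoriality of $\mathcal{P}$, $L_p$ is continuous precisely because $p$ has continuous lifting of paths, and $ev_1$ is continuous in the compact-open topology. Hence $\tilde f\circ ev_1$ is continuous, and since $ev_1$ is quotient onto $C$, the restriction $\tilde f|_C$ is continuous; openness of the path components then upgrades this to continuity of $\tilde f$ on all of $W$.

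It remains to check $\Psi = \pi\tilde f$, which is now meaningful since $\tilde f$ is continuous. Both are groupoid morphisms $\pi W\to\pi Y$ with the same object map $\tilde f$, so it is enough to compare them on a class $[\beta]\in\pi W(w_0,w_1)$. Both $\Psi([\beta])$ and $\pi\tilde f([\beta]) = [\tilde f\circ\beta]$ lie in the star $(\pi Y)_{\tilde f(w_0)}$, and each has image $[f\circ\beta]$ under $\pi p$ (for the second, because $p\circ\tilde f = f$). Since $\pi p$ is a covering morphism by Lemma \ref{contliftingprops}(1), the star map $(\pi Y)_{\tilde f(w_0)}\to(\pi X)_{f(w_0)}$ is a bijection, forcing $\Psi([\beta]) = [\tilde f\circ\beta] = \pi\tilde f([\beta])$, which completes the argument.
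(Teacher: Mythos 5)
Your proof is correct and follows essentially the same route as the paper's: reduce to a path component via openness, factor $\tilde f\circ ev_1$ as $ev_1\circ L_p\circ \mathcal{P}f$, invoke the quotient property of $ev_1$, and conclude $\Psi=\pi\tilde f$ from the covering morphism property of $\pi p$. Your explicit use of Lemma \ref{contliftingprops}(2)--(3) simply spells out what the paper compresses into ``follows in the usual manner,'' so there is nothing to flag.
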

\begin{proof}
Since the path components of $W$ are open, it suffices to show the restriction of $\tilde{f}$ to each path component of $W$ is open. Thus it suffices to the prove the lemma for $W$ path connected. Suppose $\tilde{f}(w_0)=y_0$ and $p(y_0)=x_0=f(w_0)$. Note that $\tilde{f}$ is determined as follows: given $w\in W$ and any path $\beta\in (\mathcal{P}W)_{w_0}$ such that $\alpha(1)=w$, $\tilde{f}(w)=\widetilde{f\circ\alpha}_{y_0}(1)$. That this description of $\tilde{f}(w)$ does not depend on the choice of $w_0$ or $\beta\in (\mathcal{P}W)_{w_0}$ follows, in the usual manner, from the unique path lifting of $p$ and the assumption that $\Psi$ is a lift of $\pi f$. By functorality, $\mathcal{P}f\colon(\mathcal{P}W)_{w_0}\ra (\mathcal{P}X)_{x_0}$ is continuous and since $p$ has continuous lifting of paths, there is a lifting homeomorphism $L_p\colon\pxxo\ra \pyyo$. The diagram \[\xymatrix{ (\mathcal{P}W)_{w_0} \ar[d]_-{ev_1} \ar[r]^-{\mathcal{P}f} & \pxxo \ar@/^1pc/[rr]^-{id}\ar[r]_-{L_P} & \pyyo \ar[d]^-{ev_1} \ar[r]_-{\mathcal{P}p} & \pxxo \ar[d]^-{ev_1}\\
W  \ar@/_1pc/[rrr]_-{f} \ar@{-->}[rr]^-{\tilde{f}} && Y \ar[r]^-{p} & X}.\]commutes and since the left-most vertical map is quotient, $\tilde{f}$ is continuous by the universal property of quotient spaces. Since $\pi p$ is a covering morphism and $\Psi$ and $\pi\tilde{f}$ are both lifts of $\pi f$, we have $\Psi=\pi\tilde{f}$.
\end{proof}
\section{Semicovering maps}
\begin{definition}\label{semicoveringdef} \emph{
A \textit{semicovering map} $\pyx$ is a local homeomorphism with continuous lifting of paths and homotopies.}
\end{definition}
We refer to $Y$ as a \textit{semicovering space} of $X$ and often refer to $p$ simply as a \textit{semicovering} of $X$. If $q\colon Y '\ra X$ is another semicovering of $X$, a morphism of semicoverings is a map $f\colon Y \ra Y'$ such that \[\xymatrix{ Y \ar[rr]^{f} \ar[dr]_{p} & & Y ' \ar[dl]^{p '} \\ & X }\] commutes. This defines a category $\scovx$ of semicoverings of $X$. Two semicoverings of $X$ are then \textit{equivalent} if they are isomorphic in this category. A semicovering $\pyx$ is \textit{connected} if $Y$ is non-empty and path connected. Let $\cscovx$ denote the full subcategory of connected semicoverings. A \textit{universal semicovering} of $X$ is a semicovering initial in $\cscovx$.
\begin{remark} \label{emptysemicov} \emph{
Every zero morphism $\emptyset\ra X$ is vacuously a semicovering called the \textit{empty semicovering of }$X$. The monodromy of the empty semicovering is the unique (provided $X$ is path connected) functor $\pi X\ra \set$ whose value on each object is the emptyset. Note that if $y\in Y$ and $x\in X$, any path $\alpha\in \px(p(y),x)$ lifts to a path $\tilde{\alpha}_{y}$ such that $p(\tilde{\alpha}_{y}(1))=x$. Thus if $X$ is path connected and $\pyx$ is a semicovering map, then either $Y=\emptyset$ or $p$ is surjective.}
\end{remark}
\begin{remark} \emph{
It is evident from Lemma \ref{generalliftinglemma} that a semicovering has the homotopy lifting property with respect to locally path connected, simply connected spaces and is therefore a Serre fibration with discrete fibers.}
\end{remark}

It is well-known that if one does not restrict to spaces with universal coverings the composition of two (connected) covering maps is not always a covering map. On the other hand, it is straightforward from the definition of semicovering that the composition of two semicoverings is a semicovering. Thus connected semicoverings have the desirable ``two out of three" property. The following lemma is an exercise in point-set topology.
\begin{lemma} \label{lochomeomorphism}
Let $p\colon X\ra Y$, $q\colon Y\ra Z$ and $r=q\circ p$ be surjective maps. If two of $p,q,r$ are local homeomorphisms, then so is the third. If two of $p,q,r$ have continuous lifting of paths and homotopies, then so does the third.
\end{lemma}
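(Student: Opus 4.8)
The plan is to handle the two assertions separately and, within each, to treat the three possible pairs of hypotheses. The unifying tool is the functorial factorization $\mathcal{P}r = \mathcal{P}q\circ\mathcal{P}p$ and $\Phi r = \Phi q\circ\Phi p$, which holds because $\mathcal{P}$ and $\Phi$ are functors, together with the elementary ``two out of three'' principle for homeomorphisms: in a composite $h = g\circ f$, if any two of $f,g,h$ are homeomorphisms then so is the third (via $g = h\circ f^{-1}$ or $f = g^{-1}\circ h$). This principle will dispatch the lifting statement essentially for free, while the local homeomorphism statement requires the construction of an explicit chart in two of its three cases.

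For the lifting statement I fix the relevant basepoint and restrict the three functorial maps to the appropriate stars. If $p$ and $q$ lift paths, then $\mathcal{P}p\colon(\mathcal{P}X)_{x}\to(\mathcal{P}Y)_{p(x)}$ and $\mathcal{P}q\colon(\mathcal{P}Y)_{p(x)}\to(\mathcal{P}Z)_{q(p(x))}$ are homeomorphisms, so their composite $\mathcal{P}r$ is one for every $x$. If $p$ and $r$ lift paths, then for any $y\in Y$ I use surjectivity of $p$ to choose $x$ with $p(x)=y$; then $\mathcal{P}q = \mathcal{P}r\circ(\mathcal{P}p)^{-1}$ is a homeomorphism on the star at $y$, and since every $y$ is reached this way, $q$ lifts paths. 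If $q$ and $r$ lift paths, then $\mathcal{P}p = (\mathcal{P}q)^{-1}\circ\mathcal{P}r$ is a homeomorphism for each $x$. The identical argument with $\Phi$ in place of $\mathcal{P}$ settles homotopies, so this half reduces entirely to the two out of three principle.

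For the local homeomorphism statement, the case where $p$ and $q$ are local homeomorphisms is immediate, since a composite of local homeomorphisms is one. In the remaining two cases I build a chart by intersecting the two given ones. Suppose $q$ and $r$ are local homeomorphisms and fix $x\in X$; choose an open $U\ni x$ on which $r$ restricts to a homeomorphism onto an open set and an open $V\ni p(x)$ on which $q$ restricts to a homeomorphism, and set $W = U\cap p^{-1}(V)$. On $W$ one has $p|_{W} = (q|_{V})^{-1}\circ r|_{W}$, exhibiting $p|_{W}$ as a composite of a homeomorphism onto an open set with the restriction of a homeomorphism, hence a homeomorphism onto an open subset of $Y$. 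Symmetrically, if $p$ and $r$ are local homeomorphisms, then for $y\in Y$ I use surjectivity of $p$ to pick $x$ with $p(x)=y$, shrink to a common open $U''\ni x$ on which both $p$ and $r$ are homeomorphisms onto open sets, and observe that $q$ agrees with $r|_{U''}\circ(p|_{U''})^{-1}$ on the open set $p(U'')\ni y$.

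The only step requiring any genuine care is this chart construction in the two mixed local homeomorphism cases: one must intersect the two given charts correctly and verify that the resulting restriction of the third map really is onto an \emph{open} set. Surjectivity of $p$ enters precisely in the two cases whose conclusion is a property of $q$, where it guarantees that every point of $Y$ has a preimage at which the given charts live; everything else is bookkeeping with the two out of three principle.
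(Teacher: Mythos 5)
Your proof is correct and complete. The paper itself offers no proof of this lemma---it is dismissed as ``an exercise in point-set topology''---and your argument (functoriality of $\mathcal{P}$ and $\Phi$ combined with the two-out-of-three principle for homeomorphisms for the lifting statement, plus intersection of charts for the two mixed local-homeomorphism cases, with surjectivity of $p$ invoked exactly in the cases whose conclusion concerns $q$) is precisely the standard solution the author leaves to the reader.
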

\begin{corollary} \label{twoofthree}
Let $p\colon X\ra Y$, $q\colon Y\ra Z$, and $r=q\circ p$ be maps where $Y$ and $Z$ are path connected. If two of $p,q,r$ are semicoverings, so is the third.
\end{corollary}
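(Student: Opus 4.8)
The plan is to reduce the entire statement to Lemma \ref{lochomeomorphism}. That lemma already transfers both properties defining a semicovering in the sense of Definition \ref{semicoveringdef}—being a local homeomorphism and having continuous lifting of paths and homotopies—from two of the maps $p,q,r$ to the third, provided all three maps are surjective. Consequently the whole problem reduces to checking surjectivity of $p,q,r$ in each of the three cases. Throughout I use Remark \ref{emptysemicov}: a semicovering whose path connected base is nonempty either has empty total space or is surjective. Since $Y$ and $Z$ are path connected I take them nonempty, and I assume $X\neq\emptyset$; the degenerate case $X=\emptyset$ contributes nothing, as then the map to be produced, when it is $p$ or $r$, is simply the empty semicovering.

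Suppose first that $p$ and $q$ are the given semicoverings. Since $X\neq\emptyset$ and $Y$ is path connected, $p$ is surjective; then $Y=p(X)$ is nonempty, so $q$ is surjective, and hence $r=q\circ p$ is surjective. Applying Lemma \ref{lochomeomorphism} recovers the (already observed) fact that $r$ is a semicovering. Suppose next that $p$ and $r$ are given and $q$ is to be produced. As before $p$ and $r$ are surjective, so $q(Y)=q(p(X))=r(X)=Z$ and $q$ is surjective; Lemma \ref{lochomeomorphism} then shows $q$ is a local homeomorphism with continuous lifting, i.e.\ a semicovering.

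The only genuine work is the remaining case: $q$ and $r$ are given and $p$ must be produced. Here surjectivity of $p$ does not follow formally, since $q(p(X))=q(Y)$ need not force $p(X)=Y$ when $q$ is non-injective. I will instead establish it by a path-lifting argument using that $q$ and $r$ have unique path lifting and that $Y$ is path connected. Fix $x_0\in X$, put $y_0=p(x_0)$, and let $y\in Y$ be arbitrary; choose a path $\gamma$ in $Y$ from $y_0$ to $y$. The path $q\circ\gamma$ begins at $q(y_0)=r(x_0)$ and lifts through the semicovering $r$ to a unique path $\delta$ in $X$ with $\delta(0)=x_0$ and $r\circ\delta=q\circ\gamma$. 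Then $p\circ\delta$ and $\gamma$ are paths in $Y$ starting at $y_0$ with $q\circ(p\circ\delta)=r\circ\delta=q\circ\gamma$, so unique path lifting for the semicovering $q$ gives $p\circ\delta=\gamma$; evaluating at $1$ yields $p(\delta(1))=\gamma(1)=y$. Thus $p$ is surjective.

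With surjectivity of all three maps secured in every case, Lemma \ref{lochomeomorphism} transfers the local-homeomorphism and continuous-lifting properties from the two given maps to the third, which is therefore a semicovering. The main obstacle is precisely the verification of surjectivity of $p$ in the last case; the two defining properties themselves are handled uniformly and automatically by Lemma \ref{lochomeomorphism} once surjectivity is known.
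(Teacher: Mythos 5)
Your proposal is correct and is essentially the paper's own proof: the crucial case (producing $p$ from $q$ and $r$) is argued there exactly as you do, by lifting $q\circ\gamma$ through $r$ and invoking unique path lifting of $q$ to conclude $p\circ\delta=\gamma$, hence surjectivity of $p$, followed by an appeal to Lemma \ref{lochomeomorphism}. The only cosmetic difference is that the paper checks the case where $p$ and $q$ are given directly from Definition \ref{semicoveringdef} instead of routing it through the surjectivity lemma, which changes nothing of substance.
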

\begin{proof}
The case where $X=\emptyset$ is trivial. Suppose then that $p(x_0)=y_0$ and $q(y_0)=z_0=r(x_0)$. If $p$ and $q$ are semicoverings, then $r$ clearly satisfies the conditions in Definition \ref{semicoveringdef}. If $q$ and $r$ are semicoverings and $y\in Y$ take $\alpha\in (\py)_{y_0}$ with $\alpha(1)=y$. Then $q\circ \alpha\in (\mathcal{P}Z)_{z_0}$ has unique lift $\widetilde{q\circ \alpha}_{x_0}\in (\px)_{x_0}$ with endpoint $x=\widetilde{q\circ \alpha}_{x_0}(1)$. Since $q\circ p\circ \widetilde{q\circ \alpha}_{x_0}=q\circ \alpha$ and $q$ has unique path lifting, we have $p\circ \widetilde{q\circ \alpha}_{x_0}=\alpha$. Therefore $p(x)=\alpha(1)=y$ and $p$ is surjective. By Lemma \ref{lochomeomorphism}, $p$ is a semicovering. Lastly, suppose $p$ and $r$ are semicoverings. Since $Z$ is path connected, $q$ is surjective and therefore a semicovering by Lemma \ref{lochomeomorphism}.
\end{proof}
We now check that every covering is a semicovering.
\begin{remark} \emph{
If $\pyx$ is a covering map and $U\subseteq X$ is an evenly covered neighborhood, then $p^{-1}(U)$ is the disjoint union $\coprod_{\lambda}V_{\lambda}$ of \textit{slices} $V_{\lambda}$ over $U$. The collection of slices over evenly covered neighborhoods form a basis $\mathscr{B}_{p}$ for the topology of $Y$ which is closed under finite intersection. Consequently, the neighborhoods of the form $\bigcap_{j=1}^{n}\langle K_{n}^{j},V_j\rangle$, $V_{j}\in \mathscr{B}_{p}$ give a basis for the topology of $\py$.}
\end{remark}
\begin{proposition}
For any space $X$, $\covx$ and $\ccovx$ are full subcategories of $\scovx$ and $\ccovx$ respectively. 
\end{proposition}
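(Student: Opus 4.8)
The plan is to reduce the proposition to a single substantive claim: \emph{every covering map $\pyx$ is a semicovering map.} Once this is established the statement about full subcategories is formal. A morphism in $\scovx$ from $p$ to $p'$ is by definition a map $f\colon Y\ra Y'$ with $p'\circ f=p$, and a morphism in $\covx$ is exactly the same datum; hence as soon as the objects of $\covx$ are recognized as objects of $\scovx$, the two categories share identical hom-sets and the inclusion is a full embedding compatible with composition and identities. The connected case is identical: a connected (i.e.\ non-empty, path connected) covering is in particular a connected semicovering, so $\ccovx$ sits inside $\cscovx$ in the same way.

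So I would fix a covering $\pyx$ and verify the three requirements of Definition \ref{semicoveringdef}. That $p$ is a local homeomorphism is immediate, since each slice $V_\lambda$ over an evenly covered neighborhood is carried homeomorphically onto that neighborhood and the slices cover $Y$. For continuous lifting of paths I must show each $\mathcal{P}p\colon\pyy\ra(\px)_{p(y)}$ is a homeomorphism. It is continuous because $\mathcal{P}$ is a functor, and it is a bijection because coverings have path lifting (surjectivity) and unique path lifting (injectivity). A continuous bijection is a homeomorphism as soon as it is open, so the real task is to prove $\mathcal{P}p$ is an open map, and here I would use the slice basis $\mathscr{B}_{p}$ recorded in the preceding remark.

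Concretely, let $U=\bigcap_{j=1}^{n}\langle K_{n}^{j},V_j\rangle\cap\pyy$ be a basic neighborhood with each $V_j\in\mathscr{B}_{p}$, and let $\alpha=\mathcal{P}p(\tilde\alpha)=p\circ\tilde\alpha$ for some $\tilde\alpha\in U$. Writing $U_j=p(V_j)$ (open, as $V_j$ is a slice) one has $\alpha(K_{n}^{j})\subseteq U_j$, and at each breakpoint $t_j=j/n$ the lift satisfies $\tilde\alpha(t_j)\in V_j\cap V_{j+1}$, whence $\alpha(t_j)\in p(V_j\cap V_{j+1})$. I then set
\[ W=\bigcap_{j=1}^{n}\langle K_{n}^{j},U_j\rangle\cap\bigcap_{j=1}^{n-1}\langle\{t_j\},p(V_j\cap V_{j+1})\rangle\cap(\px)_{p(y)}, \]
an open neighborhood of $\alpha$. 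Given $\beta\in W$, I construct its lift $\tilde\beta$ starting at $y$ piece by piece: on $K_{n}^{1}$ put $\tilde\beta|_{K_{n}^{1}}=(p|_{V_1})^{-1}\circ\beta|_{K_{n}^{1}}$, and having lifted into $V_j$ observe that $\beta(t_j)\in p(V_j\cap V_{j+1})$ forces $\tilde\beta(t_j)\in V_j\cap V_{j+1}\subseteq V_{j+1}$, so the next piece lifts into $V_{j+1}$ via $(p|_{V_{j+1}})^{-1}\circ\beta|_{K_{n}^{j+1}}$, matching at $t_j$. The pasting lemma yields a continuous lift with $\tilde\beta(K_{n}^{j})\subseteq V_j$, so $\tilde\beta\in U$ and $\beta=\mathcal{P}p(\tilde\beta)\in\mathcal{P}p(U)$. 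Thus $W\subseteq\mathcal{P}p(U)$ and $\mathcal{P}p$ is open.

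Continuous lifting of homotopies I would handle by the same strategy applied to $(\Phi X)_{p(y)}$: the map $\Phi p$ is a continuous bijection by the unique homotopy lifting property of coverings, and to see it is open one subdivides $\Delta_2$ finely enough that the given lift carries each cell into a single slice and then imposes the analogous compatibility conditions along the edges and at the vertices of the subdivision. The main obstacle throughout is precisely this continuity of the inverse lifting maps --- existence and uniqueness of lifts is classical, but forcing a whole neighborhood of $\alpha$ (respectively of a homotopy) to lift cell-by-cell into a prescribed tuple of slices is exactly what the breakpoint and edge compatibility conditions are engineered to guarantee. The bookkeeping is more delicate in the two-dimensional case, but it introduces no new idea beyond the one-dimensional argument above.
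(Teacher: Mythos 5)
Your proposal is correct and follows essentially the same route as the paper: continuity and bijectivity of $\mathcal{P}p$ and $\Phi p$ come from functoriality and unique lifting, and openness is established using basic neighborhoods built from slices together with breakpoint (resp.\ edge) compatibility conditions of the form $\langle\{j/n\},p(V_j\cap V_{j+1})\rangle$, lifted piecewise via the local inverses of $p$ on slices. The only cosmetic difference is that the paper exhibits the image of a basic neighborhood as \emph{exactly} the open set $\mathcal{V}$, whereas you show each image point has a neighborhood $W\subseteq\mathcal{P}p(U)$; these amount to the same argument.
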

\begin{proof}
Certainly, a covering map $\pyx$ is a local homeomorphism. Suppose $p(y_0)=x_0$. Since covering maps uniquely lift paths and homotopies of paths, $\mathcal{P}p\colon\pyyo\ra \pxxo$ and $\Phi p\colon(\Phi Y)_{y_0}\ra(\Phi X)_{x_0}$ are bijective. Both are continuous by functorality. Let $\mathcal{U}=\bigcap_{j=1}^{n}\langle K_{n}^{j},U_j\rangle$ be a basic non-empty open neighborhood in $\pyyo$ where each $U_j\in \mathscr{B}_{p}$. Since $\mathcal{U}$ is non-empty, there is a path $\tilde{\alpha}_{y_0}\in \mathcal{U}$ that is the lift of \[\alpha=p\circ \tilde{\alpha}_{y_0}\in \mathcal{V}=\left(\bigcap_{j=1}^{n}\left\langle K_{n}^{j},p(U_j)\right\rangle\right)\cap \left(\bigcap_{j=1}^{n-1}\left\langle \left\{\frac{j}{n}\right\},p(U_{j}\cap U_{j+1})\right\rangle\right)\subseteq \pxxo.\]Clearly $\mathcal{P}p(\mathcal{U})\subseteq\mathcal{V}$. The lift $\tilde{\alpha}_{y_0}$ has the following description: There are homeomorphisms $h_j\colon p(U_j)\ra U_j$ such that $p\circ h_j$ is the identity of $p(U_j)$. For each $t\in K_{n}^{j}$, we have $\tilde{\alpha}_{y_0}(t)=h_j\circ \alpha(t)$. Note that if $\beta$ is any other path in $\mathcal{V}$, the unique lift $\tilde{\beta}_{y_0}\in \pyyo$ is defined in the same way, that is, for each $t\in K_{n}^{j}$, $\tilde{\beta}_{y_0}(t)=h_j\circ \beta(t)$. The equality $\mathcal{P}p(\mathcal{U})=\mathcal{V}$ implies that $\mathcal{P}p$ is open. A completely analogous argument may be used to show that $\Phi p\colon(\Phi Y)_{y_0}\ra (\Phi X)_{x_0}$ is a homeomorphism. Recalling how lifts of homotopies of paths are constructed, one may proceed by viewing $\Delta_2$ as a simplicial complex and taking a basic open neighborhood of a homotopy $\tilde{G}_{y_0}\in(\Phi Y)_{y_0}$ to be of the form $\mathcal{U}=\bigcap_{\sigma\in sd_{n}(\Delta_2)}\langle \sigma,U_{\sigma}\rangle$ where the intersection is taken over 2-simplices $\sigma$ in the n-th barycentric subdivision $sd_{n}(\Delta_2)$ of $\Delta_2$ and $U_{\sigma}\in \mathscr{B}_{p}$. Then \[\mathcal{V}=\left(\bigcap_{\sigma\in sd_{n}(\Delta_2)}\left\langle \sigma,p(U_{\sigma})\right\rangle\right)\cap \left(\bigcap_{e=\sigma\cap \sigma '}\left\langle e,p(U_{\sigma}\cap U_{\sigma '})\right\rangle\right)\]is an open neighborhood of $G=p\circ \tilde{G}_{y_0}$ satisfying $\Phi p(\mathcal{U})=\mathcal{V}$. Here the second intersection ranges over all 1-simplices which are the intersection of two 2-simplices in $sd_{n}(\Delta_2)$. Thus every covering is a semicovering and $\covx$ and $\ccovx$ are full subcategories.
\end{proof}
\begin{example}\emph{
Since every covering is a semicovering, the composition of covering maps is always a semicovering map even if it is not a covering map. This fact alone provides simple examples of semicoverings which are not coverings. In the non-connected case, it is easy to see that $S^1\times \{1,2,...\}\ra S^1$, $(z,n)\ra z^n$ is a semicovering but not a covering. There are connected semicoverings of the Hawaiian earring \[\mathbb{HE}=\bigcup_{n\geq 1}\left\{(x,y)\in\mathbb{R}^{2}|\left(x-\frac{1}{n}\right)^{2}+y^2=\frac{1}{n^2}\right\}\] which are not coverings; variations of the semicovering space which is the answer to Exercise 6 in Chapter 1.3 of Hatcher \cite{Hatcher02} illustrate the extensiveness of semicoverings of non-semilocally 1-connected spaces beyond coverings.}
 \begin{figure}[H] \centering \includegraphics[height=2.5in]{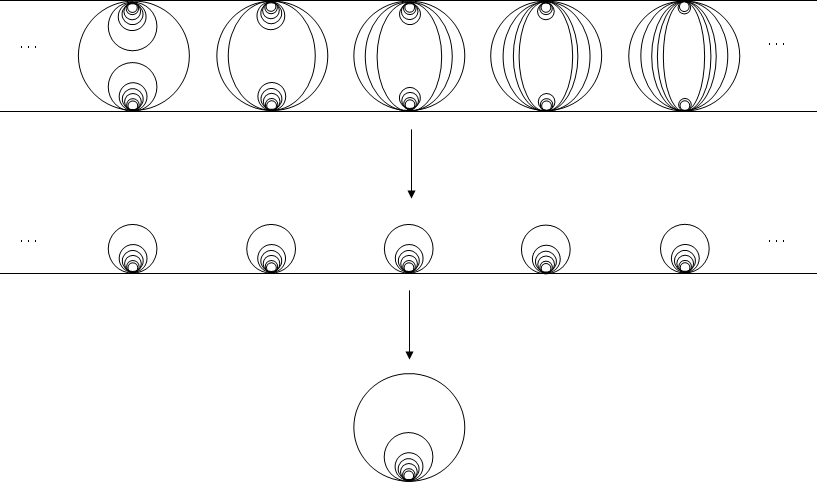}
\caption{A two-sheeted covering of a covering of $\mathbb{HE}$ which is a semicovering of $\mathbb{HE}$ but fails to be a covering of $\mathbb{HE}$.}
\end{figure}
\end{example}
It is also worthwhile to note that semicoverings admit an adequate theory of pullbacks.
\begin{proposition} \label{pullbacks}
If $\pyx$ is a semicovering of $X$, $f\colon W\ra X$ is a map, and $W\times_{X}Y=\{(w,y)|f(w)=p(y)\}$ is the pullback, the projection $f^{\ast}p\colon W\times_{X}Y\ra W$ is a semicovering of $W$. Consequently, $\mathbf{Scov}(X)$ is contravariant in $X$ with values in the category of small categories.
\end{proposition}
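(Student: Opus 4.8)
The plan is to verify directly that $f^{\ast}p\colon W\times_{X}Y\ra W$ satisfies the three requirements of Definition \ref{semicoveringdef}: that it is a local homeomorphism and has continuous lifting of paths and of homotopies. Throughout I write $\mathrm{pr}_{Y}\colon W\times_{X}Y\ra Y$ for the other projection, so that $p\circ \mathrm{pr}_{Y}=f\circ f^{\ast}p$, and I fix a point $(w_0,y_0)\in W\times_{X}Y$, setting $x_0=f(w_0)=p(y_0)$.

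First I would dispatch the local homeomorphism property, which holds for any $f$ because local homeomorphisms are stable under pullback. Since $p$ is a local homeomorphism, choose an open $V\ni y_0$ in $Y$ with $p|_{V}$ a homeomorphism onto an open set $p(V)\subseteq X$, set $U=f^{-1}(p(V))$, and check that $f^{\ast}p$ restricts to a homeomorphism from the open neighborhood $(U\times V)\cap(W\times_{X}Y)$ of $(w_0,y_0)$ onto $U$, with continuous inverse $w\mapsto \bigl(w,(p|_{V})^{-1}(f(w))\bigr)$.

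The heart of the argument is the lifting conditions, and here the key structural fact is that the path-space and homotopy-space constructions carry pullbacks to pullbacks. Because $I$ and $\Delta_2$ are (locally) compact Hausdorff, the mapping-space functors $\mathrm{Map}(I,-)$ and $\mathrm{Map}(\Delta_2,-)$ are right adjoints and therefore preserve limits; combined with the observation that a map into $W\times_{X}Y$ is exactly a compatible pair of maps into $W$ and $Y$, and that a path (resp. homotopy) based at $(w_0,y_0)$ is exactly a compatible pair of based paths (resp. homotopies), this yields canonical homeomorphisms
\[
(\mathcal{P}(W\times_{X}Y))_{(w_0,y_0)}\;\cong\;(\mathcal{P}W)_{w_0}\times_{(\mathcal{P}X)_{x_0}}(\mathcal{P}Y)_{y_0},\qquad
(\Phi(W\times_{X}Y))_{(w_0,y_0)}\;\cong\;(\Phi W)_{w_0}\times_{(\Phi X)_{x_0}}(\Phi Y)_{y_0},
\]
under which $\mathcal{P}(f^{\ast}p)$ and $\Phi(f^{\ast}p)$ become the respective projections onto the $W$-factor. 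Since $p$ is a semicovering, $\mathcal{P}p\colon(\mathcal{P}Y)_{y_0}\ra(\mathcal{P}X)_{x_0}$ and $\Phi p\colon(\Phi Y)_{y_0}\ra(\Phi X)_{x_0}$ are homeomorphisms, and a pullback of a homeomorphism is again a homeomorphism; hence those projections, and therefore $\mathcal{P}(f^{\ast}p)$ and $\Phi(f^{\ast}p)$, are homeomorphisms. This establishes continuous lifting of paths and homotopies and completes the proof that $f^{\ast}p$ is a semicovering.

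For the contravariance statement, I would define $f^{\ast}\colon \scovx\ra \mathbf{SCov}(W)$ on objects by $p\mapsto f^{\ast}p$ and on a morphism $Y\ra Y'$ of semicoverings over $X$ by the induced map $W\times_{X}Y\ra W\times_{X}Y'$ supplied by the universal property of the pullback; functoriality and the natural isomorphism $(g\circ f)^{\ast}\cong f^{\ast}\circ g^{\ast}$ are then the standard formal properties of pullbacks. I expect the only real subtlety to lie in the displayed homeomorphisms: one must ensure these are homeomorphisms of topological spaces and not merely bijections, and that the relative and basepoint conditions (constancy on $e_1$, fixed initial points) split correctly across the two factors of the pullback. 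Everything else is either routine point-set topology (the local homeomorphism step) or formal (the contravariance).
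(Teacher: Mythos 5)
Your proposal is correct and follows essentially the same route as the paper: a direct point-set check that local homeomorphisms pull back, followed by the identification $(\mathcal{P}(W\times_{X}Y))_{(w_0,y_0)}\cong(\mathcal{P}W)_{w_0}\times_{(\mathcal{P}X)_{x_0}}(\mathcal{P}Y)_{y_0}$ (and its $\Phi$ analogue) together with the fact that a pullback of a homeomorphism is a homeomorphism, and the universal property of the pullback for functoriality. The only difference is cosmetic: where the paper simply asserts the canonical comparison map $\Psi$ is a homeomorphism, you justify it via limit preservation of the mapping-space functors.
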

\begin{proof}
If $(w,y)\in W\times_{X}Y$, let $U$ be an open neighborhood of $y$ in $Y$ which is mapped homeomorphically onto $p(U)$ and $V=f^{-1}(p(U))\subseteq W$. Now $(V\times U)\cap W\times_{X}Y$ is an open neighborhood of $(w,y)$ in $W\times_{X}Y$ mapped homeomorphically onto $V$ by $f^{\ast}p$. Let $q\colon W\times_{X}Y\ra Y$ be the projection onto the second coordinate and fix $(w_0,y_0)\in W\times_{X}Y$ so that $f(w_0)=x_0=p(y_0)$. Consider the pullback square\[\xymatrix{ \left(\mathcal{P}\left(W\times_{X}Y\right)\right)_{(w_0,y_0)} \ar@{-->}[dr]_{\cong}^-{\Psi} \ar@/^1pc/[drr]^-{\mathcal{P}q} \ar@/_1pc/[ddr]_-{\mathcal{P}(f^{\ast}p)} \\
& (\mathcal{P}W)_{w_0}\times_{\pxxo}\pyyo \ar[r]_-{r_1} \ar[d]_{\cong}^-{r_2} & \pyyo \ar[d]_{\cong}^-{\mathcal{P}p} \\
& (\mathcal{P}W)_{w_0} \ar[r]_-{\mathcal{P}f} & \pxxo }\] where $r_1,r_2$ are the projections. The maps $\mathcal{P}(f^{\ast}p)$ and $\mathcal{P}q$ induce the canonical homeomorphism $\Psi$.  Since $\mathcal{P}p$ is a homeomorphism by assumption, categorical considerations give that $r_2$ is a homeomorphism. Thus $\mathcal{P}(f^{\ast}p)$ is a homeomorphism. The same argument may be used to show that $\Phi(f^{\ast}p)\colon\left(\Phi \left(W\times_{X}Y\right)\right)_{(w_0,x_0)}\ra \left(\Phi W\right)_{w_0}$ is a homeomorphism. The fact that a morphism $g\colon Y\ra Y'$ of semicoverings $p$ and $p'$ of $X$ induces a map $f^{\ast}g\colon W\times_{X}Y\ra W\times_{X}Y'$ such that $f^{\ast}p'\circ f^{\ast}g=f^{\ast}p$ (i.e. a morphism $f^{\ast}p\ra f^{\ast}p'$) follows from the universal property of $W\times_{X}Y'$. Thus $f^{\ast}\colon\mathbf{SCov}(X)\ra \mathbf{SCov}(W)$ is a functor.
\end{proof}
\begin{proposition}
If $\pyx$ and $f\colon Z\ra X$ are connected semicoverings and $p$ is universal, the induced semicovering $f^{\ast}p$ of $Z$ is also universal.
\end{proposition}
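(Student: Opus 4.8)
The plan is to realise the universal semicovering of $Z$ not as the whole total space of the pullback $f^{\ast}p\colon Z\times_X Y\ra Z$ but as the map induced by the universal property of $p$. Since $f\colon Z\ra X$ is a connected semicovering and $p$ is initial among the connected semicoverings of $X$, there is a unique morphism $g\colon Y\ra Z$ over $X$, i.e. a map with $f\circ g=p$; this same $g$ also arises as the composite of the canonical section $Y\ra Z\times_X Y$, $y\mapsto(g(y),y)$, with $f^{\ast}p$, so it is the restriction of $f^{\ast}p$ to the component of the pullback through the image of $Y$. First I would verify that $g$ is a connected semicovering of $Z$: the maps $f$ and $p=f\circ g$ are semicoverings and $Z,X$ are path connected, so Corollary \ref{twoofthree} applies to give that $g$ is a semicovering, and $g$ is connected since its total space $Y$ is nonempty and path connected by hypothesis.

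It then remains to show that $g$ is initial among the connected semicoverings of $Z$. Given an arbitrary connected semicovering $r\colon W\ra Z$, the composite $f\circ r\colon W\ra X$ is again a connected semicovering of $X$, so the universality of $p$ furnishes a unique morphism $h\colon Y\ra W$ over $X$, that is, $(f\circ r)\circ h=p$. I would then observe that $r\circ h\colon Y\ra Z$ is itself a morphism over $X$, because $f\circ(r\circ h)=p$; but $g$ is the unique such morphism, so $r\circ h=g$, and hence $h$ is a morphism $g\ra r$ over $Z$.

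For uniqueness I would argue dually: any morphism $g\ra r$ over $Z$ yields, after composition with $f$, a morphism $p\ra f\circ r$ over $X$, which must coincide with $h$ by the uniqueness clause in the initiality of $p$; therefore the given morphism over $Z$ was already $h$. Existence and uniqueness of the comparison morphism together show that $g$ is initial among the connected semicoverings of $Z$, that is, universal.

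The main obstacle is conceptual rather than computational. One cannot simply invoke functoriality to claim that $f^{\ast}$ sends the initial object to the initial object, since the total space $Z\times_X Y$ is in general disconnected—already the universal cover $\mathbb{R}\ra S^1$ pulls back along a degree $n$ map to $n$ disjoint copies of $\mathbb{R}$. The real content of the argument is to isolate the correct connected piece, namely the induced map $g$, and to run the two halves of its universal property; the one piece of delicate bookkeeping is the identity $r\circ h=g$, which is precisely where the uniqueness packaged into the initiality of $p$ is used.
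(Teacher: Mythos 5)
The paper states this proposition without proof, so your argument must stand on its own; its skeleton---replace $f^{\ast}p$ by the induced map $g\colon Y\to Z$ with $f\circ g=p$, check that $g$ is a connected semicovering via Corollary \ref{twoofthree}, then transfer the universal property---is natural, and your observation that $Z\times_{X}Y$ is disconnected in general is correct and pinpoints a real defect in the literal statement. The genuine gap is that every essential step of your transfer argument invokes the \emph{uniqueness} half of initiality, and uniqueness cannot be what ``universal'' means here. A strictly initial object of $\mathbf{SCov_0}(X)$ would admit no self-morphism over $X$ except the identity; but deck transformations are such self-morphisms, so for example $\mathbb{R}\to S^{1}$, which the closing corollary of the paper certifies as a universal semicovering of $S^{1}$ (take $S$ trivial), has $\mathbb{Z}$-many endomorphisms over $S^{1}$ and is not strictly initial. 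Thus ``initial in $\mathbf{SCov_0}(X)$'' must be read weakly, exactly as for classical universal covers: a morphism to every connected semicovering \emph{exists}, but is not unique. Under that reading, $g$ is no longer ``the unique'' morphism $p\to f$; given your $h\colon Y\to W$ with $f\circ r\circ h=p$, the maps $r\circ h$ and $g$ are merely two lifts of $p$ through the semicovering $f$, and such lifts coincide only if they agree at a single point---which nothing in the weak universal property lets you arrange. So the pivotal identity $r\circ h=g$ is unjustified, and your final uniqueness paragraph is likewise unavailable (and false in general, since $g$ itself will have nontrivial deck transformations).

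Nor is this a bookkeeping problem that more care repairs, because in this theory a universal semicovering need not be ``simply connected.'' By Theorem \ref{ClassificationII} and the closing corollary, universality of $p$ says only that the open subgroup $S=p_{*}\pi_{1}(Y,y_{0})\le\pi_{1}^{\tau}(X,x_{0})$ has a $\pi_{1}(X,x_{0})$-conjugate inside every open subgroup. The component of $Z\times_{X}Y$ you isolate (equivalently $g$) corresponds to the subgroup $f_{*}^{-1}(S)\le\pi_{1}(Z,z_{0})$, and its universality over $Z$ would require a $\pi_{1}(Z,z_{0})$-conjugate of $f_{*}^{-1}(S)$ inside every open subgroup $K\le\pi_{1}^{\tau}(Z,z_{0})$. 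Universality of $p$ yields $a\in\pi_{1}(X,x_{0})$ with $aSa^{-1}\subseteq f_{*}(K)$ (the subgroup $f_{*}(K)$ is open by the corollary to Theorem \ref{embedding}); your conclusion follows only if $a$ can be chosen inside $f_{*}\pi_{1}(Z,z_{0})$, and there is no formal reason it can be---classically the issue is invisible precisely because there $S=1$. A correct proof must therefore pass through the lifting criterion (Lemma \ref{contliftingprops}(3) together with Lemma \ref{generalliftinglemma}) and this subgroup condition rather than the bare universal property, and the same analysis shows the proposition itself should be treated with caution at this level of generality: the needed conjugation statement is genuinely stronger than what universality of $p$ provides.
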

\section{$\spaces$-groupoids and $\pi^{\tau} X$} \label{topgroupoidsection}
In this section, we provide a construction for a topology on the fundamental groupoid which plays an important role in the classification of semicoverings.

The \textit{path component space} $\pi_{0}^{qtop}Z$ of a space $Z$ is the set of path components $\pi_{0}Z$ viewed as a quotient space of $Z$. That $\pi_{0}^{qtop}$ gives endofunctors of $\spaces$ and $\bspaces$ follows directly from the universal property of quotient spaces. The group $\pi_{1}^{qtop}(X,x)=\pi_{0}^{qtop}\Omega(X,x)$ is the \textit{quasitopological fundamental group} of $(X,x)$ and is characterized by the canonical map $h\colon\Omega(X,x)\ra \pi_{1}^{qtop}(X,x)$ identifying homotopy classes of maps being quotient. This is a quasitopological group in the sense that inversion is continuous and left and right translations by fixed elements are continuous. It is known that $\pi_{1}^{qtop}$ is a homotopy invariant which takes values in the category $\qtg$ of quasitopological groups and continuous group homomorphisms \cite{CM}, however, $\pi_{1}^{qtop}(X,x)$ often fails to be a topological group \cite{Br10.1,Fab10,Fab11}.

In \cite{Br10.2}, this failure is repaired within $\qtg$ by noticing that the forgetful functor $\tg\to \qtg$ has a left adjoint $\tau$ such that the two triangles in \[\xymatrix{ \qtg \ar[dr] \ar@<.5ex>[rr]^-{\tau} && \tg \ar@<.5ex>[ll] \ar[dl]\\ & \grp }\] commute (the unlabeled arrows are forgetful functors). To construct $\tau$ explicitly, let $F_{M}(G)$ be the free (Markov) topological group on the underlying space of $G$ and $m_G\colon F_{M}(G)\ra G$ be the multiplication of letters induced by the identity of $G$. Let $\tau(G)$ be the underlying group of $G$ with the quotient topology with respect to $m_G$. Since $F_{M}(G)$ is a topological group, so is the quotient group $\tau(G)$. Applying $\tau$ has the effect of removing the smallest number of open sets from the topology of a quasitopological group $G$ so that one obtains a topological group. Thus $\tau(G)=G$ if and only if $G$ is already a topological group. It is then natural to define $\pi_{1}^{\tau}=\tau\circ \pi_{1}^{qtop}$.
\begin{proposition}
The topology of $\pi_{1}^{\tau}(X,x_0)$ is the finest group topology on $\pi_{1}(X,x_0)$ such that $h\colon\Omega(X,x_0)\to \pi_{1}(X,x_0)$ is continuous.
\end{proposition}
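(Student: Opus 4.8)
The plan is to verify the two defining features of ``finest group topology making $h$ continuous'' in turn: first that the topology of $\pitx$ is a group topology for which $h\colon\Omega(X,x_0)\to\pi_1(X,x_0)$ is continuous, and second that it is finer than every other such group topology. Throughout I would write $G=\pitopxx=\pi_0^{qtop}\Omega(X,x_0)$, so that $\pitx=\tau(G)$, and I would lean on exactly two structural facts recalled above: the canonical map $h\colon\Omega(X,x_0)\to G$ is a quotient map of spaces, and $\tau$ is left adjoint to the forgetful functor $\qtg\to\tg$, with unit $\eta_G\colon G\to\tau(G)$ a continuous homomorphism that is the identity on the underlying group $\pi_1(X,x_0)$.

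For the first feature, observe that $\pitx=\tau(G)$ is by construction a topological group, so its topology is a group topology on $\pi_1(X,x_0)$. Since $\eta_G$ and $h$ are both continuous, so is the composite $\eta_G\circ h\colon\Omega(X,x_0)\to\pitx$; and because $\eta_G$ is the identity on underlying groups, this composite is precisely $h$ regarded as a map into $\pitx$. Hence $h$ is continuous for the topology of $\pitx$, as required.

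For the second feature, let $\mathcal T$ be any group topology on $\pi_1(X,x_0)$ making $h$ continuous, and write $K$ for the resulting topological group. Since $h\colon\Omega(X,x_0)\to G$ is a quotient map and the continuous map $h\colon\Omega(X,x_0)\to K$ shares its fibers (the homotopy classes), the universal property of the quotient topology yields a continuous homomorphism $j\colon G\to K$ equal to the identity on $\pi_1(X,x_0)$. Viewing $j$ as a morphism of quasitopological groups into the topological group $K$, the adjunction defining $\tau$ factors it uniquely as $j=\bar\jmath\circ\eta_G$ for a continuous homomorphism $\bar\jmath\colon\pitx=\tau(G)\to K$. As $\eta_G$ and $j$ are both the identity on underlying groups, so is $\bar\jmath$; thus $\bar\jmath$ is the identity map $\pitx\to K$, and its continuity says exactly that every $\mathcal T$-open subset of $\pi_1(X,x_0)$ is open in $\pitx$. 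Therefore $\mathcal T$ is coarser than the topology of $\pitx$, and together with the first feature this shows $\pitx$ carries the finest group topology making $h$ continuous.

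The main obstacle is isolating and correctly applying the universal property of $\tau$ in the previous paragraph: one must check that both the quotient factorization $j$ and the adjunction factorization $\bar\jmath$ are realized by the identity homomorphism on $\pi_1(X,x_0)$, so that the abstract continuity assertions translate into the desired containment of topologies. Everything else is a formal manipulation of the quotient and adjunction universal properties recalled above; in particular no explicit computation with the free topological group $\fmx$ or the multiplication map $m_G$ is needed, since only the universal property of $\tau$, and not its explicit construction, enters the argument.
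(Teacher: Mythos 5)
Your proposal is correct and follows essentially the same route as the paper: use the quotient property of $h\colon\Omega(X,x_0)\to\pi_{1}^{qtop}(X,x_0)$ to get a continuous identity homomorphism into any competing topological group, then apply the adjunction defining $\tau$ to factor this through $\pi_{1}^{\tau}(X,x_0)$, concluding the competing topology is coarser. The paper's proof is just a terser version of your second paragraph (it leaves your first paragraph, that $\pi_{1}^{\tau}(X,x_0)$ itself is a group topology making $h$ continuous, implicit in the construction of $\tau$).
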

\begin{proof}
Suppose $\pi_{1}(X,x_0)$ is endowed with a topology making it a topological group and such that $\Omega(X,x_0)\to \pi_{1}(X,x_0)$ is continuous. The identity $\pi_{1}^{qtop}(X,x_0)\to \pi_{1}(X,x_0)$ is continuous by the universal property of quotient spaces and since $\pi_{1}(X,x_0)$ is a topological group, the adjoint is the continuous identity $\pi_{1}^{\tau}(X,x_0)=\tau(\pi_{1}^{qtop}(X,x_0))\to \pi_{1}(X,x_0)$. Thus the topology of $\pi_{1}^{\tau}(X,x_0)$ is finer than that of $\pi_{1}(X,x_0)$.
\end{proof}
This construction is now extended to the fundamental groupoid.
\begin{definition} \emph{
A $\mathbf{qTop}$-\textit{groupoid} is a (small) groupoid $\sG$ where the object set $X$ and hom-sets $\sG(x,y)$ are equipped with topologies such that each composition $\sG(x,y)\times \sG(y,z)\ra \sG(x,z)$ is continuous in each variable and each inversion function $\sG(x,y)\ra \sG(y,x)$ is continuous. A morphism of $\mathbf{qTop}$-groupoids is a functor $F\colon\sG\ra \sG '$ such that each function $F\colon\sG(x,y)\ra \sG '(F(x),F(y))$, $f\mapsto F(f)$ is continuous. If each composition map in $\sG$ is jointly continuous, then $\sG$ is a $\mathbf{Top}$-groupoid, that is, a groupoid enriched over $\spaces$ in the sense of \cite{KellyEnriched}. Let $\mathbf{qTopGrpd}$ be the category of $\mathbf{qTop}$-groupoids and $\mathbf{TopGrpd}$ be the full subcategory of $\mathbf{Top}$-groupoids.}
\end{definition}
Note that $\sG(x,y)\times \sG(y,z)\ra \sG(x,z)$ is continuous in each variable if and only if all translations $\lambda_{f}\colon\sG(x,y)\ra \sG(x,z)$, $g\mapsto fg$ and $\rho_{k}\colon\sG(y,z)\ra \sG(x,z)$, $g\mapsto gk$ are homeomorphisms. The author emphasizes that the notion of $\spaces$-groupoid is distinct from that of \textit{topological groupoid} which refers to a groupoid internal to $\spaces$.
\begin{proposition}
Let $\pi^{qtop} X$ denote the fundamental groupoid of $X$ where each hom-set $\pi^{qtop} X(x_1,x_2)$ is viewed as the quotient space $\pi_{0}^{qtop}\left(\mathcal{P}X(x_1,x_2)\right)$. This gives the fundamental groupoid the structure of a $\mathbf{qTop}$-groupoid. Moreover, $\pi^{qtop}\colon\spaces\ra \mathbf{qTopGrpd}$ is a functor.
\end{proposition}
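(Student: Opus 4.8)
The plan is to exploit throughout that, by the definition of $\pi_0^{qtop}$, the canonical identification map $h\colon\mathcal{P}X(x_1,x_2)\to\pi^{qtop}X(x_1,x_2)=\pi_0^{qtop}\left(\mathcal{P}X(x_1,x_2)\right)$ is a quotient map. The object set of $\pi^{qtop}X$ is simply $X$ with its given topology, and the underlying groupoid and functor laws are exactly those of the ordinary fundamental groupoid $\pi$; so the only content is to verify that inversion, the two translations, and each hom-set map induced by a continuous $f\colon X\to Y$ are continuous. In every case I would factor a continuous operation on path spaces through the quotient maps $h$ and conclude by the universal property of quotient spaces.

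First I would treat inversion. Reversal $\alpha\mapsto\alpha^{-1}$ is a self-homeomorphism of $\mathcal{P}X$ restricting to a homeomorphism $\mathcal{P}X(x_1,x_2)\to\mathcal{P}X(x_2,x_1)$. Since $\pi_0^{qtop}\colon\spaces\to\spaces$ is a functor, applying it yields a (continuous) homeomorphism $\pi^{qtop}X(x_1,x_2)\to\pi^{qtop}X(x_2,x_1)$, which is precisely the groupoid inversion $[\alpha]\mapsto[\alpha^{-1}]$.

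For composition I would invoke the observation recorded after the definition of $\mathbf{qTop}$-groupoid, reducing continuity in each variable to continuity of the left and right translations. Fixing $[\beta]\in\pi^{qtop}X(x_2,x_3)$ with representative $\beta$, concatenation-with-$\beta$, i.e. $\mathcal{P}X(x_1,x_2)\to\mathcal{P}X(x_1,x_3)$, $\alpha\mapsto\alpha\ast\beta$, is continuous as the fixed-second-coordinate restriction of the continuous concatenation map, and the square
\[\xymatrix{ \mathcal{P}X(x_1,x_2) \ar[r]^-{-\ast\beta} \ar[d]_-{h} & \mathcal{P}X(x_1,x_3) \ar[d]^-{h} \\ \pi^{qtop}X(x_1,x_2) \ar[r]^-{\rho_{[\beta]}} & \pi^{qtop}X(x_1,x_3) }\]
commutes because $h(\alpha\ast\beta)=[\alpha][\beta]=\rho_{[\beta]}(h(\alpha))$. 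The top-then-right composite is continuous and constant on the fibers of $h$ (homotopic paths concatenate with $\beta$ to homotopic paths), so $\rho_{[\beta]}$ is continuous by the quotient universal property; the mirror-image argument handles $\lambda_{[\alpha]}$. Since the inverse of a translation is again a translation (e.g. $\rho_{[\beta]}^{-1}=\rho_{[\beta^{-1}]}$), continuity upgrades to the homeomorphism statement, and $\pi^{qtop}X$ is a $\mathbf{qTop}$-groupoid.

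Finally, for functoriality the underlying assignment is the usual $\pi f$, so $\pi^{qtop}(g\circ f)=\pi^{qtop}g\circ\pi^{qtop}f$ and $\pi^{qtop}(\mathrm{id})=\mathrm{id}$ hold automatically; it remains to see that $\pi^{qtop}f\colon\pi^{qtop}X(x_1,x_2)\to\pi^{qtop}Y(f(x_1),f(x_2))$, $[\alpha]\mapsto[f\circ\alpha]$, is continuous. This follows from the same quotient argument applied to the square whose top edge is the continuous map $\mathcal{P}f\colon\mathcal{P}X(x_1,x_2)\to\mathcal{P}Y(f(x_1),f(x_2))$, $\alpha\mapsto f\circ\alpha$. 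I expect the work to be routine once the quotient framework is in place; the main thing to get right is the reduction for composition---remembering that ``continuous in each variable'' (not joint continuity) is what $\mathbf{qTop}$-groupoids require, and that inverses of translations are translations, so that mere continuity yields the asserted homeomorphisms.
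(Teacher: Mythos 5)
Your proof is correct and takes essentially the same approach as the paper: the paper likewise obtains continuity of inversion and of the left/right translations by applying $\pi_{0}^{qtop}$ (i.e., the universal property of the quotient maps $h$) to reversal and to concatenation-with-a-fixed-path on path spaces, and gets functoriality from $\pi_{0}^{qtop}(\mathcal{P}f)$. Your explicit quotient-square arguments simply unroll the functoriality of $\pi_{0}^{qtop}$ that the paper invokes in a single sentence.
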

\begin{proof}
By applying $\pi_{0}^{qtop}$ to operations of left concatenation $\alpha\to \alpha\ast \beta$, right concatenation $\alpha\to\beta\ast\alpha$, and inversion $\alpha\mapsto \alpha^{-1}$ on path spaces, one observes that $\pi^{qtop}X$ is indeed a $\mathbf{qTop}$-groupoid. Similarly, a map $f\colon X\ra Y$ induces the map $\pi_{0}^{top}(\mathcal{P}(f))\colon\pi^{qtop}X(x_1,x_2)\to \pi^{qtop}Y(f(x_1),f(x_2))$, $[\alpha]\mapsto [f\circ \alpha]$.
\end{proof}
The following lemma extends the definition of $\tau$ from groups to groupoids by applying the group-valued $\tau$ to vertex groups and extending via translations.
\begin{lemma}
The forgetful functor $\topgrpd\ra \qtopgrpd$ has a left adjoint $\tau\colon\qtopgrpd\ra \topgrpd$ which is the identity on underlying groupoids.
\end{lemma}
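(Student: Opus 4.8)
The plan is to define $\tau\sG$ to have the same underlying groupoid and the same object space as $\sG$, to equip each vertex group $\sG(x,x)$ with the group-valued topology $\tau(\sG(x,x))$, and to transport this topology across each star by translation. Explicitly, whenever $\sG(x,y)\neq\emptyset$ I would fix $g_0\in\sG(x,y)$ and give $\sG(x,y)$ the unique topology making right translation $\rho_{g_0}\colon\tau(\sG(x,x))\ra\sG(x,y)$ a homeomorphism (empty hom-sets carry the unique topology). The first thing to verify is that this does not depend on $g_0$: for a second choice $g_1$, the composite $\rho_{g_1}^{-1}\circ\rho_{g_0}$ is right translation in the group $\sG(x,x)$ by $g_0 g_1^{-1}$, a self-homeomorphism of $\tau(\sG(x,x))$ precisely because the latter is a topological group. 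Thus the construction is canonical and component-wise.

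The heart of the argument is that $\tau\sG$ is a genuine $\topgrpd$-groupoid, i.e. that composition is \emph{jointly} continuous. I would fix $g_0\in\sG(x,y)$ and $k_0\in\sG(y,z)$ and coordinatize the three relevant hom-sets via $g_0$, $k_0$, and $g_0 k_0$. Conjugating the composition map by the corresponding translation homeomorphisms produces the map $\tau(\sG(x,x))\times\tau(\sG(y,y))\ra\tau(\sG(x,x))$ sending $(h,h')\mapsto h\cdot(g_0 h' g_0^{-1})$. This factors as the multiplication of $\tau(\sG(x,x))$ precomposed with $\mathrm{id}\times c_{g_0}$, where $c_{g_0}\colon\sG(y,y)\ra\sG(x,x)$, $h'\mapsto g_0 h' g_0^{-1}$, is conjugation. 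The multiplication is jointly continuous since $\tau(\sG(x,x))$ is a topological group, so the remaining—and main—point is continuity of $c_{g_0}\colon\tau(\sG(y,y))\ra\tau(\sG(x,x))$. This is exactly where functoriality of the group-valued $\tau$ does the work: $c_{g_0}=\lambda_{g_0}\circ\rho_{g_0^{-1}}$ is a composite of translations, hence an isomorphism of quasitopological groups $\sG(y,y)\ra\sG(x,x)$, and applying the functor $\tau\colon\qtg\ra\tg$ (which is the identity on underlying groups) yields its continuity after retopologizing. A parallel computation reducing inversion on $\sG(x,y)$ to inversion on $\tau(\sG(x,x))$ followed by a conjugation shows inversion is continuous as well.

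For functoriality I would set $\tau(F)=F$ on underlying groupoids for a morphism $F\colon\sG\ra\sG'$; continuity on each hom-set follows from the identity $F\circ\rho_{g_0}=\rho_{F(g_0)}\circ F$, which exhibits $F|_{\sG(x,y)}$ as a composite of translation homeomorphisms with the vertex-group map $\tau(F)\colon\tau(\sG(x,x))\ra\tau(\sG'(Fx,Fx))$, continuous by the group-valued functor. Preservation of identities and composites is immediate since $\tau$ is the identity on underlying groupoids.

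Finally, for the adjunction $\tau\dashv U$ I would take the unit $\eta_\sG\colon\sG\ra U\tau\sG$ to be the identity on underlying groupoids; on each vertex group it is the unit $\sG(x,x)\ra\tau(\sG(x,x))$ of the group-level adjunction, and on a general star it is obtained by conjugating this continuous identity with translations, hence continuous, so $\eta_\sG$ is a $\qtop$-morphism. Given $\sH\in\topgrpd$ and a $\qtop$-morphism $F\colon\sG\ra U\sH$, the factoring arrow $\tau\sG\ra\sH$ is forced to equal $F$ on underlying groupoids, and its continuity reduces on vertex groups to the universal property of the group-valued $\tau$ mapping into the topological group $\sH(Fx,Fx)$ and on general hom-sets to the same translation argument, now using that translations in the $\topgrpd$-groupoid $\sH$ are homeomorphisms. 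Naturality is routine. Throughout, I expect the decisive step to be the joint continuity of composition, and the essential tool is that $\tau$, as a functor on quasitopological groups, carries the conjugation isomorphisms between vertex groups to continuous maps.
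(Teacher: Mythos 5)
Your proposal is correct and follows essentially the same route as the paper: transport the group-valued $\tau$-topology from the vertex groups to the hom-sets via right translations, use the functoriality of $\tau\colon\qtg\ra\tg$ on the conjugation isomorphisms between vertex groups to get the remaining translations continuous, and reduce both functoriality and the adjunction's universal property to the group-level adjunction by the same translation trick. The only difference is one of completeness, in your favor: the paper leaves the joint continuity of composition as an assertion ("one now may use the fact that the vertex groups are topological groups\dots"), whereas you make explicit the decomposition of the conjugated composition map as multiplication in $\tau(\sG(x,x))$ precomposed with $\mathrm{id}\times c_{g_0}$.
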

\begin{proof}
Let $\sG$ be a $\mathbf{qTop}$-groupoid. For each $x\in Ob(\sG)$, let $\tau(\sG)(x)$ be the topological group $\tau(\sG(x))$. If $x\neq y$ and $\sG(x,y)\neq\emptyset$, let $\tau(\sG)(x,y)$ have the topology generated by the sets $Ug=\{ug|u\in U\}$ where $g\in \sG(x,y)$ and $U$ is open in $\tau(\sG)(x)$. Since $Ug_{1}g_{2}^{-1}$ is open in $\tau(\sG)(x)$ for all $g_1,g_2\in \sG(x,y)$, the right translations $\rho_{g}\colon\tau(\sG)(x)\ra \tau(\sG)(x,y)$ are homeomorphisms. Note that if $g\in \sG(x,y)$, then $\lambda_{g^{-1}}\circ \rho_{g}\colon\sG(x)\ra \sG(y)$, $h\mapsto g^{-1}hg$ is an isomorphism of quasitopological groups. The functorality of $\tau\colon\qtg\ra \tg$ then gives that the same homomorphism $\tau(\sG)(x)\ra \tau(\sG)(y)$ is an isomorphism of topological groups. Thus all left translations $\lambda_{g}\colon\tau(\sG)(y)\ra \tau(\sG)(x,y)$ are homeomorphisms. One now may use the fact that the vertex groups $\tau(\sG)(x)$ are topological groups to see that $\tau(\sG)$ is a $\spaces$-groupoid. A morphism $F\colon\sG\ra \sG '$ of $\mathbf{qTop}$-groupoids induces a morphism $\tau(F)\colon\tau(\sG)\ra \tau(\sG ')$ of $\spaces$-groupoids since the group-valued $\tau$ on the vertex groups gives continuous homomorphisms $\tau(\sG)(x)\ra \tau(\sG ')(F(x))$. One may then extend to all hom-sets via translations. We use a similar argument to illustrate the universal property of $\tau(\sG)$. Suppose $\sG '$ is a $\spaces$-groupoid and $F\colon\sG\to \sG '$ is a morphism of $\mathbf{qTop}$-groupoids. It suffices to show that each function $F\colon\tau(\sG)(x,y)\ra \sG '(F(x),F(y))$ is continuous. Note that for each $x\in Ob(\sG)$, $F\colon\sG(x)\to \sG '(F(x))$ is a continuous group homomorphism from a quasitopological group to a topological group. The adjoint homomorphism $F\colon\tau(\sG)(x)=\tau(G(x))\to \sG '(F(x))$ is also continuous. Again, we extend via translations to find that $F\colon\tau(\sG)(x,y)\ra \sG '(F(x),F(y))$ is continuous in general.
\end{proof}
By construction, the vertex groups of $\tau(\sG)$ are the topological groups $\tau(\sG(x))$. Since each identity $\sG(x)\ra \tau(\sG)(x)$ is continuous, it follows that the identity functor $\sG\ra \tau(\sG)$ is a morphism of $\mathbf{qTop}$-groupoids. Just as in the group case a $\mathbf{qTop}$-groupoid $\sG$ is a $\mathbf{Top}$-groupoid if and only if $\sG=\tau(\sG)$.
\begin{definition} \emph{
The \textit{fundamental} $\spaces$\textit{-groupoid} of a topological space $X$ is the $\spaces$-groupoid $\pi^{\tau}X=\tau(\pi^{qtop} X)$.}
\end{definition}

For practicality, we introduce an alternative construction of $\pi^{\tau} X$. The following approximation technique, which is possible since the existence of $\pi^{\tau} X$ is already known, extends to groupoids the well-known process of inductively forming quotient topologies on groups. The group case of what is given here is laid out in more detail in \cite{Br10.2}.
\begin{approximation}\label{approximation} \emph{
Let $\sG=\sG_0$ be a $\mathbf{qTop}$-groupoid. Construct $\mathbf{qTop}$-groupoids $\sG_{\zeta}$ inductively so that if $\zeta$ is a successor ordinal, the topology of $\sG_{\zeta}(x,y)$ (provided it is non-empty) is the quotient topology with respect to the sum of multiplication maps \[\mu\colon\coprod_{a\in Ob(G)}\sG_{\zeta-1}(x,a)\times \sG_{\zeta-1}(a,y)\to \sG_{\zeta}(x,y).\]If $\zeta$ is a limit ordinal, the topology of $\sG_{\zeta}(x,y)$ is the intersection of the topologies of $\sG_{\eta}(x,y)$ for $\eta<\zeta$.}
\end{approximation}
\begin{lemma} Let $\sG$ be a $\mathbf{qTop}$-groupoid.
\begin{enumerate}
\item The identities $\sG_{\zeta}\ra \sG_{\zeta+1}\ra \tau(\sG)$ are morphisms of $\mathbf{qTop}$-groupoids for each $\zeta$.
\item $\sG_{\zeta}$ is a $\mathbf{qTop}$-groupoid for each $\zeta$.
\item $\tau(\sG_{\zeta})=\tau(\sG)$ for each $\zeta$.
\item $\sG_{\zeta}$ is a $\mathbf{Top}$-groupoid if and only if $\sG_{\zeta}=\tau(\sG)$ if and only if $\sG_{\zeta}(x,y)=\sG_{\zeta+1}(x,y)$ for all $x,y\in Ob(\sG)$.
\item There is an ordinal number $\zeta_{0}$ such that $\sG_{\zeta}=\tau(\sG)$ for each $\zeta\geq \zeta_{0}$.
\end{enumerate}
\end{lemma}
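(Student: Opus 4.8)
The plan is to handle parts (1) and (2) together by a single transfinite induction on $\zeta$, since the very assertion that $\sG_{\zeta}\ra\sG_{\zeta+1}$ is a morphism of $\qtopgrpd$ presupposes that both are $\mathbf{qTop}$-groupoids. As the induction runs I would record two monotonicity facts on each hom-set: that the identity $\sG_{\zeta}\ra\sG_{\zeta+1}$ is continuous (the topologies coarsen as $\zeta$ grows, so $\sG_{\zeta}$ is coarser than $\sG=\sG_0$) and that the identity $\sG_{\zeta}\ra\tau(\sG)$ is continuous (so every $\sG_{\zeta}$ sits topologically between $\sG$ and $\tau(\sG)$). The base case is the hypothesis that $\sG_0=\sG$ is a $\mathbf{qTop}$-groupoid together with the already-noted fact that $\sG\ra\tau(\sG)$ is a $\qtopgrpd$-morphism.

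For the successor step of (2) the key move is to test each structural map on $\sG_{\zeta}(x,y)$ by precomposing with the defining quotient map $\mu\colon\coprod_{a\in Ob(\sG)}\sG_{\zeta-1}(x,a)\times\sG_{\zeta-1}(a,y)\ra\sG_{\zeta}(x,y)$ and invoking the universal property of the quotient. For a right translation $\rho_k\colon\sG_{\zeta}(x,y)\ra\sG_{\zeta}(x,z)$ with $k\in\sG(y,z)$, the restriction of $\rho_k\circ\mu$ to the summand indexed by $a$ is the composite of $\mathrm{id}\times\rho_k$ (a homeomorphism on the $\sG_{\zeta-1}$-factors by the inductive hypothesis that $\sG_{\zeta-1}$ is a $\mathbf{qTop}$-groupoid) with the quotient map of $\sG_{\zeta}(x,z)$; hence $\rho_k\circ\mu$ is continuous summand-by-summand, so $\rho_k$ is continuous, and since $\rho_k^{-1}=\rho_{k^{-1}}$ it is a homeomorphism. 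The same template handles left translations and the inversion $\sG_{\zeta}(x,y)\ra\sG_{\zeta}(y,x)$, for which precomposition with $\mu$ gives $(g,h)\mapsto h^{-1}g^{-1}$, continuous on each summand because inversion is a homeomorphism on $\sG_{\zeta-1}$. This makes $\sG_{\zeta}$ a $\mathbf{qTop}$-groupoid. For (1), coarsening $\sG_{\zeta}\ra\sG_{\zeta+1}$ follows by precomposing the quotient with the section $g\mapsto(g,1_y)$ into the $a=y$ summand, which exhibits each $\sG_{\zeta+1}(x,y)$-open set as $\sG_{\zeta}(x,y)$-open; and $\sG_{\zeta+1}\ra\tau(\sG)$ is continuous because $\mu$ into the $\mathbf{Top}$-groupoid $\tau(\sG)$ is continuous on each summand (using joint continuity of composition in $\tau(\sG)$ and the inductive continuity of $\sG_{\zeta-1}\ra\tau(\sG)$), so the universal property applies. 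The limit step is easier: with $\sG_{\zeta}(x,y)$ carrying the intersection of the earlier topologies, continuity of each structural map and of the two comparison identities may be tested against each $\sG_{\eta}$ with $\eta<\zeta$, where it holds by the inductive hypothesis.

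Part (3) should be pure adjunction formalism. From (1) the identities $\sG\ra\sG_{\zeta}$ and $\sG_{\zeta}\ra\tau(\sG)$ are $\qtopgrpd$-morphisms; applying the functor $\tau$ to the first gives $\tau(\sG)\ra\tau(\sG_{\zeta})$, while the universal property of $\tau(\sG_{\zeta})$ applied to the second (whose target is a $\mathbf{Top}$-groupoid) gives $\tau(\sG_{\zeta})\ra\tau(\sG)$. As both are the identity on the common underlying groupoid, the topologies coincide, so $\tau(\sG_{\zeta})=\tau(\sG)$. Part (4) is a cycle of implications. That $\sG_{\zeta}=\tau(\sG)$ makes $\sG_{\zeta}$ a $\mathbf{Top}$-groupoid is immediate since $\tau(\sG)$ is one; conversely, if $\sG_{\zeta}$ is a $\mathbf{Top}$-groupoid then $\sG_{\zeta}=\tau(\sG_{\zeta})=\tau(\sG)$ by the characterization that a $\mathbf{qTop}$-groupoid equals its own $\tau$ exactly when it is a $\mathbf{Top}$-groupoid, together with (3). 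For the third condition, if $\sG_{\zeta}$ is a $\mathbf{Top}$-groupoid then joint continuity makes $\mu$ continuous into $\sG_{\zeta}(x,y)$, forcing every $\sG_{\zeta}$-open set to be $\sG_{\zeta+1}$-open, which with (1) gives $\sG_{\zeta}(x,y)=\sG_{\zeta+1}(x,y)$; and if that equality holds for all $x,y$, then $\mu$ lands continuously in $\sG_{\zeta}(x,y)$, so each summand restriction, i.e.\ each composition $\sG_{\zeta}(x,a)\times\sG_{\zeta}(a,y)\ra\sG_{\zeta}(x,y)$, is jointly continuous and $\sG_{\zeta}$ is a $\mathbf{Top}$-groupoid.

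Finally, part (5) follows from (4) by a stabilization argument. For each fixed pair $(x,y)$ the topologies on $\sG_{\zeta}(x,y)$ form a non-increasing transfinite chain of subsets of the power set of the underlying hom-set, and such a chain cannot descend strictly through more than a set's worth of stages, so it is eventually constant, stabilizing at some ordinal $\zeta_{x,y}$. Taking $\zeta_0=\sup_{x,y}\zeta_{x,y}$ over the (small) set of pairs yields an ordinal at which every hom-set topology has stabilized, so in particular $\sG_{\zeta_0}(x,y)=\sG_{\zeta_0+1}(x,y)$ for all $x,y$; part (4) then gives $\sG_{\zeta_0}=\tau(\sG)$, and stability forces $\sG_{\zeta}=\tau(\sG)$ for all $\zeta\geq\zeta_0$. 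I expect the genuine work to be concentrated in the successor step of part (2): verifying that passing to the quotient topology preserves the $\mathbf{qTop}$-groupoid axioms—translations homeomorphisms and inversion continuous—is where the groupoid bookkeeping and the inductive hypothesis actually do the job, whereas parts (1), (3), (4), and (5) are comparatively formal once that step is in hand.
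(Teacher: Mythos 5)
Your proposal is correct and follows essentially the same route as the paper: the identity maps and structural maps are handled by transfinite induction via the universal property of the quotient maps $\mu$, part (3) by the $\tau$-adjunction, part (4) by the observation that joint continuity of composition is equivalent to continuity of $\mu$ into $\sG_{\zeta}(x,y)$, and part (5) by the impossibility of injecting the ordinals into a fixed power set. The only differences are cosmetic: you spell out the translation/inversion checks that the paper dismisses as ``a straightforward transfinite induction,'' and you stabilize each hom-set topology separately and take a supremum, whereas the paper runs the same cardinality argument once on the disjoint union of all hom-spaces.
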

\begin{proof}

1. This follows from transfinite induction. The case for limit ordinals is clear. For successor ordinal $\zeta$, each map $\sG_{\zeta-1}(x,y)\ra \sG_{\zeta}(x,y)$ is continuous since $\sG_{\zeta-1}(x,y)\times \{id_{y}\}\subset\coprod_{a\in Ob(G)}\sG_{\zeta-1}(x,a)\times \sG_{\zeta-1}(a,y)$ and $\mu$ is continuous. Additionally, the left vertical map in the following diagram is quotient.
\[\xymatrix{
 \coprod_{a\in Ob(\sG)}\sG_{\zeta-1}(x,a)\times \sG_{\zeta-1}(a,y) \ar[r]^-{id} \ar[d]_-{\mu} & \coprod_{a\in Ob(\sG)}\tau(\sG)(x,a)\times\tau(\sG)(a,y) \ar[d]_-{\mu} \\ \sG_{\zeta}(x,y) \ar[r]_-{id} & \tau(\sG)(x,y) }\]Therefore if the top map is continuous, so is the bottom map.

2. The continuity of translations and inversion in $\sG_{\zeta}$ follows from a straightforward transfinite induction argument similar to that in 1.

3. Since $id\colon\sG_{\zeta}\to \tau(\sG)$ is a morphism of $\mathbf{qTop}$-groupoids so is $id\colon\tau(\sG_{\zeta})\to \tau(\tau(\sG))=\tau(\sG)$. Additionally $\sG\ra \sG_{\zeta}\to \tau(\sG_{\zeta})$ is a morphism of $\mathbf{qTop}$-groupoids whose adjoint is the inverse $id\colon\tau(\sG)\to \tau(\sG_{\zeta})$.

4. The first biconditional is clear from 3. The second is clear from the observation that $\sG_{\zeta}$ is a $\spaces$-groupoid if and only if $\mu\colon\coprod_{a\in Ob(\sG)}\sG_{\zeta}(x,a)\times \sG_{\zeta}(a,y)\to \sG_{\zeta}(x,y)$ is continuous for each $x,y\in Ob(\sG)$.

5. For each ordinal $\zeta$, let $A_{\zeta}=\coprod_{x,y\in Ob(\sG)}\sG_{\zeta}(x,y)$ be the disjoint union of hom-spaces and $\mathscr{T}_{\zeta}$ be the topology of $A_{\zeta}$. 1. gives that $\mathscr{T}_{\zeta+1}\subseteq \mathscr{T}_{\zeta}\subseteq \mathscr{T}_{0}$ for each $\zeta$ and 4. implies that $\mathscr{T}_{\zeta+1}= \mathscr{T}_{\zeta}$ if and only if $\sG_{\zeta}=\tau(\sG)$. Suppose $\sG_{\zeta}\neq\tau(\sG)$ for each $\zeta$. Thus $\mathscr{T}_{\zeta}-\mathscr{T}_{\zeta+1}\neq\emptyset$ for each ordinal $\zeta$, contradicting the fact there is no injection of ordinal numbers into the power set of $\mathscr{T}_{0}$. Thus there is an ordinal $\zeta_{0}$ such that $\sG_{\zeta_{0}}=\tau(\sG)$. Since $\sG_{\zeta_{0}}\ra \sG_{\zeta}\ra \tau(\sG)$ are morphisms of $\mathbf{qTop}$-groupoids whenever $\zeta\geq \zeta_{0}$, it follows that $\sG_{\zeta}=\tau(\sG)$ for all $\zeta\geq \zeta_{0}$.
\end{proof}
\begin{corollary}
For each $x_1,x_2\in X$, the canonical maps $h\colon\px(x_1,x_2)\ra \pi^{\tau}X(x_1,x_2)$ identifying homotopy classes of paths are continuous.
\end{corollary}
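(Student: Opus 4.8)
The plan is to exhibit $h$ as a composite of two maps already known to be continuous. Write $h^{q}\colon\px(x_1,x_2)\ra \pi^{qtop}X(x_1,x_2)$ for the canonical map identifying homotopy classes of paths. By the very definition of $\pi^{qtop}X(x_1,x_2)$ as the quotient space $\pi_{0}^{qtop}\left(\px(x_1,x_2)\right)$, the map $h^{q}$ is a quotient map and in particular continuous. Since $\tau$ is the identity on underlying groupoids, $\pi^{\tau}X(x_1,x_2)=\tau(\pi^{qtop}X)(x_1,x_2)$ has the same underlying set as $\pi^{qtop}X(x_1,x_2)$, and the canonical map $h$ into $\pi^{\tau}X(x_1,x_2)$ is precisely the composite of $h^{q}$ with the identity function $\pi^{qtop}X(x_1,x_2)\ra \pi^{\tau}X(x_1,x_2)$ on underlying sets.

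First I would observe that the identity functor $\pi^{qtop}X\ra \tau(\pi^{qtop}X)=\pi^{\tau}X$ is a morphism of $\qtopgrpd$. This is exactly the content of the remark following the lemma constructing the left adjoint $\tau\colon\qtopgrpd\ra\topgrpd$, where it is noted that since each identity $\sG(x)\ra \tau(\sG)(x)$ on vertex groups is continuous, the identity functor $\sG\ra \tau(\sG)$ is a morphism of $\mathbf{qTop}$-groupoids. By the definition of a morphism of $\mathbf{qTop}$-groupoids, this means precisely that each induced function on hom-sets $\pi^{qtop}X(x_1,x_2)\ra \pi^{\tau}X(x_1,x_2)$ is continuous. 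Composing this continuous identity with the continuous quotient map $h^{q}$ yields that $h$ is continuous, as desired.

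I expect no genuine obstacle here: the statement is an immediate corollary of the construction of $\pi^{\tau}X$ via $\tau$ together with the characterization of each hom-set $\pi^{qtop}X(x_1,x_2)$ as a quotient of a path space. The only point requiring care is the bookkeeping observation that $\tau$ acts as the identity on underlying sets, so that $h$ and $h^{q}$ share the same underlying function and differ only in the topology placed on the target; one must then cite the comparison identity as being continuous on \emph{every} hom-set rather than merely on the vertex groups, which is guaranteed because the identity functor $\pi^{qtop}X\ra \pi^{\tau}X$ is a morphism of $\qtopgrpd$ and not just a family of continuous vertex-group homomorphisms.
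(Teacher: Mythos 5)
Your proof is correct and matches the paper's argument: the paper likewise factors $h$ through the quotient map onto $\pi^{qtop}X(x_1,x_2)$ and then uses that the topology of $\pi^{\tau}X(x_1,x_2)$ is coarser, which is exactly your continuous-identity step justified by the remark that $\sG\ra\tau(\sG)$ is a morphism of $\mathbf{qTop}$-groupoids. No gaps; your version just spells out the bookkeeping more explicitly.
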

\begin{proof}
The topology of $\pi^{\tau}X(x_1,x_2)$ is coarser than that of $\pi^{qtop}X(x_1,x_2)$ and $h\colon\px(x_1,x_2)\ra \pi^{qtop}X(x_1,x_2)$ is continuous by definition.
\end{proof}
\section{Open embeddings and enriched monodromy}
\begin{lemma} \label{embeddingpathspaces}
If $\pyx$ is a semicovering such that $p(y_i)=x_i$, $i=1,2$, the map $\mathcal{P}p\colon\py(y_1,y_2)\ra \px(x_1,x_2)$ is an open embedding.
\end{lemma}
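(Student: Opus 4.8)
The plan is to leverage the fact that continuous lifting of paths already supplies, for each point, a homeomorphism. By definition of semicovering, $\mathcal{P}p\colon (\py)_{y_1}\to (\px)_{x_1}$ is a homeomorphism whose continuous inverse is the lifting homeomorphism $L_p$. The map under consideration is simply the restriction of this homeomorphism to the subspace $\py(y_1,y_2)\subseteq (\py)_{y_1}$, and its image lands in $\px(x_1,x_2)\subseteq (\px)_{x_1}$ since $p\circ\beta$ runs from $x_1$ to $x_2$ whenever $\beta$ runs from $y_1$ to $y_2$. Because a restriction of a homeomorphism to a subspace is a homeomorphism onto its image, and subspace topologies compose transitively, $\mathcal{P}p$ is automatically an embedding of $\py(y_1,y_2)$ into $\px(x_1,x_2)$. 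Thus the only genuine content is to verify that the image is \emph{open} in $\px(x_1,x_2)$.

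First I would identify the image explicitly. A path $\alpha\in\px(x_1,x_2)$ lies in $\mathcal{P}p\big(\py(y_1,y_2)\big)$ precisely when its unique lift $\tilde{\alpha}_{y_1}=L_p(\alpha)$ starting at $y_1$ terminates at $y_2$. Indeed, if $\beta\in\py(y_1,y_2)$ then $\beta$ is the unique lift of $p\circ\beta$ starting at $y_1$, so $L_p(p\circ\beta)(1)=\beta(1)=y_2$; conversely, $\tilde{\alpha}_{y_1}(1)=y_2$ forces $\tilde{\alpha}_{y_1}\in\py(y_1,y_2)$ with $\mathcal{P}p(\tilde{\alpha}_{y_1})=\alpha$. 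Hence the image equals $\{\alpha\in\px(x_1,x_2)\mid \tilde{\alpha}_{y_1}(1)=y_2\}$.

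To prove this set is open, I would introduce the continuous ``endpoint-of-lift'' map $ev_1\circ L_p\colon (\px)_{x_1}\to Y$, $\alpha\mapsto\tilde{\alpha}_{y_1}(1)$, continuous as the composite of the lifting homeomorphism $L_p$ with the continuous endpoint evaluation $ev_1$. The decisive point is that $p$ is a local homeomorphism, so its fibers are discrete: choosing an open $V\ni y_2$ in $Y$ that $p$ maps homeomorphically onto $p(V)$ yields $V\cap p^{-1}(x_2)=\{y_2\}$. Then $(ev_1\circ L_p)^{-1}(V)$ is open in $(\px)_{x_1}$, and intersecting with $\px(x_1,x_2)$ produces exactly the image: for $\alpha\in\px(x_1,x_2)$ the endpoint $\tilde{\alpha}_{y_1}(1)$ already lies in the fiber $p^{-1}(x_2)$, so it belongs to $V$ if and only if it equals $y_2$. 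Therefore the image is open in $\px(x_1,x_2)$, and $\mathcal{P}p$ is an open embedding.

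The main obstacle, and really the only step that is not formal bookkeeping, is this openness claim. The crux is to resist the temptation to argue that $\{y_2\}$ is open in $Y$ (which it need not be); instead one must corestrict the endpoint-of-lift map to the discrete fiber $p^{-1}(x_2)$, where local homeomorphicity of $p$ guarantees that $\{y_2\}$ is open. Everything else follows from transitivity of subspace topologies and the fact that $\mathcal{P}p$ is already known to be a homeomorphism on full star spaces.
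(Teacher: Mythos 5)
Your proof is correct and is essentially the paper's argument: both reduce the lemma to showing the image is open, and both obtain openness from the same two ingredients --- the lifting homeomorphism $L_p$ and an open neighborhood $V$ of $y_2$ mapped homeomorphically by $p$, so that $V\cap p^{-1}(x_2)=\{y_2\}$. The only divergence is presentational: the paper argues pointwise with basic compact-open neighborhoods $\bigcap_{j=1}^{n}\langle K_{n}^{j},U_j\rangle$ whose final brick $U_n$ plays the role of your $V$, while you describe the image globally as $(ev_{1}\circ L_p)^{-1}(V)\cap\mathcal{P}X(x_1,x_2)$, a slightly cleaner packaging of the same idea.
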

\begin{proof}
Since $\mathcal{P}p\colon\py(y_1,y_2)\ra \px(x_1,x_2)$ is the restriction of the homeomorphism $\mathcal{P}p\colon(\py)_{y_1}\ra (\px)_{x_1}$, it suffices to show the image of $\mathcal{P}p$ is open in $\px(x_1,x_2)$. Let $\alpha\in \px(x_1,x_2)$ such that $\tilde{\alpha}_{y_{1}}\in \py(y_1,y_2)$. Let $\mathcal{U}=\bigcap_{j=1}^{n}\langle K_{n}^{j},U_j\rangle$ be an open neighborhood of $\tilde{\alpha}_{y_{1}}$ in $(\py)_{y_1}$ such that $p|_{U_n}\colon U_n\ra p(U_n)$ is a homeomorphism. Since $\mathcal{P}p\colon(\py)_{y_1}\cong(\px)_{x_1}$, $\mathcal{W}=\mathcal{P}p(\mathcal{U})\cap \px(x_1,x_2)$ is an open neighborhood of $\alpha$ in $\px(x_1,x_2)$. If $\beta\in \mathcal{W}$, then $\mathcal{U}$ is an open neighborhood of $\tilde{\beta}_{y_{1}}$ in $(\py)_{y_1}$. In particular, $\tilde{\beta}_{y_{1}}(1)\in U_n\cap p^{-1}(x_2)=\{y_{2}\}$. Therefore $\tilde{\beta}_{y_{1}}\in \py(y_1,y_2)$ giving the inclusion $\mathcal{W}\subseteq Im\left(\mathcal{P}p\right)$.
\end{proof}
\begin{theorem} \label{embedding}
Let $\pyx$ be a semicovering map. The covering morphism $\pi^{\tau} p\colon\pi^{\tau} Y\ra \pi^{\tau} X$ is an open $\spaces$-functor.
\end{theorem}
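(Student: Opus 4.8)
The plan is to treat the three assertions separately. That $\pi^{\tau}p$ is a covering morphism is immediate from Lemma \ref{contliftingprops}(1). That it is a $\spaces$-functor (continuous on each hom-set) is automatic: $\pi^{qtop}p\colon\pi^{qtop}Y\ra\pi^{qtop}X$ is a morphism of $\mathbf{qTop}$-groupoids by functoriality of $\pi^{qtop}$, and $\pi^{\tau}p=\tau(\pi^{qtop}p)$ is the morphism of $\spaces$-groupoids obtained by applying $\tau$, whose hom-set maps are continuous. All the content lies in showing each $\pi^{\tau}p\colon\pi^{\tau}Y(y_1,y_2)\ra\pi^{\tau}X(x_1,x_2)$ is \emph{open}; in fact I would prove the stronger statement that it is an open \emph{embedding}, since the embedding property is what makes the inductive argument below close up.

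First I would establish the claim at the level of $\pi^{qtop}$. By Lemma \ref{embeddingpathspaces}, $\mathcal{P}p\colon\py(y_1,y_2)\ra\px(x_1,x_2)$ is an open embedding, and passing to path-component spaces gives a commuting square whose vertical maps $h_Y,h_X$ are the quotient homotopy-class identifications and whose top map is $\mathcal{P}p$. Two observations let me descend to the quotients. First, the induced set map is the injection $\pi p$ of the covering morphism, so $h_Y(\alpha)=h_Y(\alpha')$ iff $h_X(p\alpha)=h_X(p\alpha')$; that is, $\mathcal{P}p$ reflects the homotopy relation. Second, by Lemma \ref{contliftingprops}(3) the image of $\mathcal{P}p$ is exactly $\{\beta\in\px(x_1,x_2)\mid y_1\cdot[\beta]=y_2\}$, and since the action depends only on $[\beta]$ this image is a union of path components, i.e.\ saturated. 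A routine quotient argument then shows that an open embedding with saturated image which reflects the defining equivalence induces an open embedding on quotients; hence $\pi^{qtop}p$ is an open embedding on every hom-set.

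Next I would propagate this through $\tau$ using the approximation $\{\sG_\zeta\}$ of \ref{approximation}, writing $\sG=\pi^{qtop}X$ and $\sH=\pi^{qtop}Y$, and proving by transfinite induction that for every ordinal $\zeta$ the map $\sH_\zeta(y_1,y_2)\ra\sG_\zeta(x_1,x_2)$ is an open embedding on all object pairs; the underlying set map is always the fixed injection $\pi p$, so only the topology is at issue. The base case is the previous paragraph. For a successor $\zeta+1$ I compare the defining multiplication quotients. Grouping the summands of $\coprod_{b\in Y}\sH_\zeta(y_1,b)\times\sH_\zeta(b,y_2)$ by $a=p(b)$: each factor map is an open embedding by hypothesis, products of open embeddings are open embeddings, and for distinct $b\in p^{-1}(a)$ the images in $\sG_\zeta(x_1,a)$ are disjoint (their elements are classes whose lift from $y_1$ ends at $b$); the surjections onto the distinct open summands indexed by $a$ then assemble into an open embedding of coproducts. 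This map's image is saturated for $\mu$: given $\gamma\delta=\pi p(gh)$, I lift $\gamma$ from $y_1$ to an endpoint $b''$ and then $\delta$ from $b''$, which terminates at $y_2$ since $y_1\cdot(\gamma\delta)=y_2$, producing the required preimage; and reflecting holds because $\pi p$ is injective on $\pi Y(y_1,y_2)$. The same quotient lemma gives that $\sH_{\zeta+1}(y_1,y_2)\ra\sG_{\zeta+1}(x_1,x_2)$ is an open embedding.

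The main obstacle is the limit step, where the topology of $\sG_\zeta(x_1,x_2)$ is the intersection of the earlier topologies $\mathscr{T}^{G}_\eta$. Injectivity and continuity are formal, but openness is delicate, since the subspace topology of an intersection of topologies is in general strictly coarser than the intersection of the subspace topologies. The point that rescues the argument is exactly that at each earlier stage the image $S$ of $\sH_\eta(y_1,y_2)$ is \emph{open} in $\sG_\eta(x_1,x_2)$: then $S$ lies in every $\mathscr{T}^{G}_\eta$, hence is open in their intersection $\sG_\zeta(x_1,x_2)$, and for an open subspace the two constructions of the subspace topology coincide, so the stage-$\zeta$ map is again an open embedding. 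This is precisely why I carry the open-\emph{embedding} statement rather than mere openness through the induction. Finally, taking $\zeta_0$ large enough that the approximation has stabilized for both, so that $\sG_{\zeta_0}=\tau(\sG)=\pi^{\tau}X$ and $\sH_{\zeta_0}=\pi^{\tau}Y$, the stage-$\zeta_0$ conclusion says exactly that $\pi^{\tau}p$ is an open embedding, hence an open $\spaces$-functor, on each hom-set.
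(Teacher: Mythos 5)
Your proof is correct and follows essentially the same route as the paper's: the base case descends the open embedding of Lemma \ref{embeddingpathspaces} through the quotients $h_Y,h_X$ via the saturation/reflection computation (using Lemma \ref{contliftingprops}(3) and injectivity of $\pi p$), the successor step runs the identical lifting argument through the multiplication quotients, and the limit step and stabilization close the transfinite induction just as in the paper, which dismisses the limit case as clear where you spell it out. The only differences are presentational: you abstract the repeated computation into a quotient lemma and get hom-set continuity from functoriality of $\tau$ rather than inside the induction.
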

\begin{proof}
First, note that each function $\pi p\colon\pi Y(y_1,y_2)\to \pi X(p(y_1),p(y_2))$ is injective since $\pi p$ is a covering morphism of groupoids. This injectivity is independent of topologies on the hom-sets. We take the inductive approach to $\pi^{\tau}$ discussed in \ref{approximation}. For simplicity of notation, let $\sH_{0}=\pi^{qtop}Y$ and $\sG_{0}=\pi^{qtop}X$ and inductively take $\sH_{\zeta}$ and $\sG_{\zeta}$ to be the approximating $\mathbf{qTop}$-groupoids of $\tau(\sH_{0})=\pi^{\tau}Y$ and $\tau(\sG_{0})=\pi^{\tau} X$ respectively. For the first inductive step, we show that whenever $p(y_i)=x_i$, $i=1,2$, the map $\pi p\colon \sH_{0}(y_1,y_2)\to \sG_{0}(x_1,x_2)$, $[\alpha]\mapsto [p\circ \alpha]$ is open.

Let $U$ be an open neighborhood in $\sH_{0}(y_1,y_2)$. The diagram \[\xymatrix{ \py(y_1,y_2) \ar[r]^{\mathcal{P}p} \ar[d]_{h_{Y}} & \px(x_1,x_2) \ar[d]^{h_X} \\ \sH_{0}(y_1,y_2) \ar[r]_{\pi p} & \sG_{0}(x_1,x_2) }\]commutes when $h_{Y}$ and $h_{X}$ are the canonical quotient maps. Since $h_X$ is quotient and the top map is open (Lemma \ref{embeddingpathspaces}), it suffices to show $h_{X}^{-1}(\pi p(U))=\mathcal{P}p(h_{Y}^{-1}(U))$. If $\alpha\in h_{X}^{-1}(\pi p(U))$, then $[\alpha]\in \pi p(U)$ and the lift $\tilde{\alpha}_{y_{1}}$ ends at $y_{2}$ by 3. of Lemma \ref{contliftingprops}. Now that $\pi p\left(\left[\tilde{\alpha}_{y_{1}}\right]\right)=[\alpha]\in \pi p(U)$, the injectivity of $\pi p$ gives $\left[\tilde{\alpha}_{y_{1}}\right]\in U$. Therefore $\alpha=p\circ \tilde{\alpha}_{y_{1}}=\mathcal{P}p(\tilde{\alpha}_{y_{1}})$ for $\tilde{\alpha}_{y_{1}}\in h_{Y}^{-1}(U)$. For the other inclusion, if $\alpha=p\circ \tilde{\alpha}_{y_{1}}$ such that $\left[\tilde{\alpha}_{y_{1}}\right]\in U$, then $[\alpha]=\pi p\left(\left[\tilde{\alpha}_{y_{1}}\right]\right)\in \pi p(U)$ and therefore $\alpha\in h_{X}^{-1}(\pi p(U))$.

Suppose $\zeta$ is an ordinal and that $\pi p\colon \sH_{\eta}(y_1,y_2)\to \sG_{\eta}(p(y_1),p(y_2))$ is open for each $y_1,y_2\in Y$ and each ordinal $\eta<\zeta$. Fix $y_1,y_2\in Y$ and let $p(y_i)=x_i$. Clearly, if $\zeta$ is a limit ordinal, then $\pi p\colon\sH_{\zeta}(y_{1},y_{2})\ra \sG_{\zeta}(x_1,x_2)$ is an open embedding. If $\zeta$ is a successor ordinal, consider the following diagram.
\[\xymatrix{\coprod_{b\in Y}\sH_{\zeta-1}(y_1,b)\times \sH_{\zeta-1}(b,y_2) \ar[d]_-{\mu_{Y}} \ar[rr]^-{\mathscr{P}_{\zeta-1}} && \coprod_{a\in X}\sG_{\zeta-1}(x_1,a)\times \sG_{\zeta-1}(a,x_2) \ar[d]^-{\mu_{X}}\\
\sH_{\zeta}(y_{1},y_{2}) \ar[rr]_-{\pi p} && \sG_{\zeta}(x_1,x_2) }\]The vertical multiplication maps are quotient by definition. The top map $\mathscr{P}_{\zeta-1}$ is, on each summand, the product of open embeddings (by induction hypothesis): \[\pi p\times \pi p\colon\sH_{\zeta-1}(y_{1},b)\times \sH_{\zeta-1}(b,y_{2})\ra \sG_{\zeta-1}(x_1,p(b))\times \sG_{\zeta-1}(p(b),x_2)\]where $([\alpha],[\beta])\mapsto ([p\circ \alpha],[p\circ \beta])$. Therefore $\mathscr{P}_{\zeta-1}$ is continuous and open. By the universal property of quotient spaces, the bottom map is continuous. Now suppose $U$ is open in $\sH_{\zeta}(y_{1},y_{2})$. It suffices to show $\mu_{X}^{-1}(\pi p(U))$ is open. If $([\delta],[\epsilon])\in \mu_{X}^{-1}(\pi p(U))$, then $[\delta\ast\epsilon]\in \pi p(U)$. Let $a_0=\delta(1)$. Consequently, the lift of $\delta\ast\epsilon$ starting at $y_{1}$ ends at $y_{2}$. This lift is $\tilde{\delta}_{y_{1}}\ast \tilde{\epsilon}_{b_0}$ where $b_0=\tilde{\delta}_{y_{1}}(1)\in p^{-1}(a_0)$. Since \[\pi p\left(\left[\tilde{\delta}_{y_{1}}\ast\tilde{\epsilon}_{b_0}\right]\right)=\left[p\circ \left(\tilde{\delta}_{y_{1}}\ast\tilde{\epsilon}_{b_0}\right)\right] = \left[\left(p\circ\tilde{\delta}_{y_{1}}\right)\ast\left(p\circ \tilde{\epsilon}_{b_0}\right)\right] = [\delta\ast\epsilon]\in \pi p(U)\]and $\pi p$ is injective, $\left[\tilde{\delta}_{y_{1}}\ast\tilde{\epsilon}_{b_0}\right]\in U$. Therefore $\mu_{Y}^{-1}(U)$ is an open neighborhood of $\left(\left[\tilde{\delta}_{y_{1}}\right],\left[\tilde{\epsilon}_{b_0}\right]\right)$ and $\mathscr{P}_{\zeta-1}(\mu_{Y}^{-1}(U))$ is an open neighborhood of $([\delta],[\epsilon])$ in $\coprod_{a\in X}\sG_{\zeta-1}(x_1,a)\times \sG_{\zeta-1}(a,x_2)$. It then suffices to check the inclusion $\mathscr{P}_{\zeta-1}(\mu_{Y}^{-1}(U)) \subseteq \mu_{X}^{-1}(\pi p(U))$. This follows easily from noticing that if $([\alpha],[\beta])\in \mu_{Y}^{-1}(U)$, then $[\alpha\ast \beta]\in U$ and \[\mu_{X}(\mathscr{P}_{\zeta-1}([\alpha],[\beta]))=[p\circ \alpha][p\circ \beta] = [p\circ (\alpha\ast \beta)] = \pi p([\alpha\ast \beta])\in \pi p(U)\]
\end{proof}
\begin{corollary}
If $\pyx$ is a semicovering such that $p(y_0)=x_0$, then $\pi_{1}^{\tau}p\colon\pi_{1}^{\tau}(Y,y_0)\to \pi_{1}^{\tau}(X,x_0)$ is an open embedding of topological groups.
\end{corollary}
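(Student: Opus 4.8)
The plan is to deduce the corollary by specializing Theorem \ref{embedding} to the vertex group at $y_0$. By the definition of the fundamental $\spaces$-groupoid, the topological group $\pity$ is precisely the hom-space $\pi^{\tau}Y(y_0,y_0)$ and $\pitx$ is $\pi^{\tau}X(x_0,x_0)$; under this identification $\pi_{1}^{\tau}p$ is exactly the hom-set map $\pi^{\tau}p\colon\pi^{\tau}Y(y_0,y_0)\ra\pi^{\tau}X(x_0,x_0)$ obtained by setting $y_1=y_2=y_0$. Thus the entire statement is a restriction of the theorem just proved, and the work reduces to reading off the three properties (continuity, openness, injectivity) and assembling them.

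First I would record that this restricted map is a continuous group homomorphism. It is a homomorphism because $\pi^{\tau}p$ is a functor: on the vertex group it sends $[\alpha]\mapsto[p\circ\alpha]$ and so respects concatenation. It is continuous because Theorem \ref{embedding} asserts that $\pi^{\tau}p$ is a $\spaces$-functor, so every hom-set map, in particular $\pi_{1}^{\tau}p$, is continuous. By the construction of $\tau$ the vertex groups $\pity$ and $\pitx$ are genuine topological groups, so this is a morphism in $\tg$.

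Next I would extract openness and injectivity. Theorem \ref{embedding} states that $\pi^{\tau}p$ is an \emph{open} $\spaces$-functor, so each hom-set map, and hence $\pi_{1}^{\tau}p$, is open. Injectivity comes from Lemma \ref{contliftingprops}(1): since $\pi p$ is a covering morphism of groupoids, each function $\pi Y(y_1,y_2)\ra\pi X(p(y_1),p(y_2))$ on underlying hom-sets is injective, a point already noted at the start of the proof of Theorem \ref{embedding}. This injectivity is independent of the topologies, so $\pi_{1}^{\tau}p$ is injective.

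Finally I would combine these: a continuous, open, injective group homomorphism between topological groups is an open embedding, since openness forces the image to be an open subgroup and makes the set-theoretic inverse on the image continuous, while continuity and injectivity give a continuous bijection onto that image. I do not expect any substantive obstacle here; all the real work lies in Theorem \ref{embedding}, and the only content of the corollary beyond that theorem is the definitional identification of the vertex groups of $\pi^{\tau}$ with the groups $\pi_{1}^{\tau}$ together with the elementary point-set fact that an injective continuous open map is an embedding.
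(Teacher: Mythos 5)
Your proposal is correct and matches the paper's (implicit) argument: the paper states this corollary immediately after Theorem \ref{embedding} with no written proof, precisely because it follows by restricting the open $\spaces$-functor $\pi^{\tau}p$ to the vertex group at $y_0$, exactly as you do. Your identification of $\pi_{1}^{\tau}$ with the vertex groups of $\pi^{\tau}$, the sourcing of injectivity from the covering-morphism property (noted at the start of the paper's proof of Theorem \ref{embedding}), and the closing point-set observation that a continuous open injective homomorphism is an open embedding are all accurate and complete.
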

The monodromy of a semicovering becomes an enriched functor when we use $\pi^{\tau} X$ and view $\set$ of non-empty sets as a $\spaces$-category by giving each set the discrete topology and endowing each hom-set $\set(F_1,F_2)$ with the topology of pointwise convergence (which is equivalent to the compact-open topology).
\begin{corollary}
The monodromy $\mathscr{M}p\colon\pi^{\tau} X\ra \set$ of a semicovering $\pyx$ is a $\spaces$-functor. Moreover, $\mathscr{M}\colon\scovx\ra \mathbf{TopFunc}(\pi^{\tau} X,\set)$ is a functor.
\end{corollary}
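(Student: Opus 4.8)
The plan is to treat the two assertions separately, with the enrichment of a single monodromy functor carrying all the real content and the functoriality of $\mathscr{M}$ being essentially formal. Recall that the underlying functor $\mathscr{M}p\colon \pi X\ra \set$ is already in hand, so for the first assertion it remains only to verify that on each pair of objects the induced function
\[\mathscr{M}p\colon \pi^{\tau}X(x_1,x_2)\ra \set\!\left(p^{-1}(x_1),p^{-1}(x_2)\right),\qquad [\alpha]\mapsto \left(y\mapsto y\cdot[\alpha]\right)\]
is continuous. Since $\set$ is enriched with the topology of pointwise convergence and the fibers are discrete, a subbasis for the target is given by the sets $W(y,y')=\{g\mid g(y)=y'\}$ for $y\in p^{-1}(x_1)$ and $y'\in p^{-1}(x_2)$. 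Hence it suffices to check that each preimage $\mathscr{M}p^{-1}(W(y,y'))$ is open.

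The key step is to identify this preimage. By definition $[\alpha]\in\mathscr{M}p^{-1}(W(y,y'))$ exactly when $y\cdot[\alpha]=y'$, and by part 3 of Lemma \ref{contliftingprops} this holds precisely when $[\alpha]$ lies in the image of $\pi p\colon \pi Y(y,y')\ra \pi X(x_1,x_2)$. As $\pi^{\tau}p$ and $\pi p$ agree on underlying morphisms, this preimage is exactly the image of the hom-map $\pi^{\tau}p\colon \pi^{\tau}Y(y,y')\ra \pi^{\tau}X(x_1,x_2)$. By Theorem \ref{embedding} the covering morphism $\pi^{\tau}p$ is an open $\spaces$-functor, so this hom-map is open and therefore has open image in $\pi^{\tau}X(x_1,x_2)$. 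Thus every $\mathscr{M}p^{-1}(W(y,y'))$ is open and $\mathscr{M}p$ is a $\spaces$-functor.

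For the second assertion I would check that a morphism $f\colon Y\ra Y'$ of semicoverings over $X$ (so $p'\circ f=p$) induces a $\spaces$-natural transformation $\mathscr{M}f\colon \mathscr{M}p\ra \mathscr{M}p'$ whose component at $x$ is the restriction $f|_{p^{-1}(x)}\colon p^{-1}(x)\ra (p')^{-1}(x)$; this lands in the correct fiber because $p'\circ f=p$, and continuity of each component is automatic as the fibers are discrete. Because the excerpt defines a $\spaces$-natural transformation to be merely a natural transformation of underlying functors, the only remaining point is naturality: for $[\alpha]\in\pi X(x_1,x_2)$ one needs $f(y\cdot[\alpha])=f(y)\cdot[\alpha]$. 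This follows from uniqueness of path lifting in $Y'$, since the path $f\circ\tilde{\alpha}_{y}$ projects under $p'$ to $p\circ\tilde{\alpha}_{y}=\alpha$ and starts at $f(y)$, hence equals the canonical lift of $\alpha$ to $Y'$ at $f(y)$; evaluating at $1$ gives the required equality. Functoriality of $\mathscr{M}$ is then immediate, as $\mathscr{M}(\mathrm{id}_Y)$ has identity components and the component of $\mathscr{M}(g\circ f)$ at $x$ is $(g\circ f)|_{p^{-1}(x)}=\left(g|_{(p')^{-1}(x)}\right)\circ\left(f|_{p^{-1}(x)}\right)$.

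I expect the enrichment of $\mathscr{M}p$ to be the only genuine obstacle, and within it the crucial move is recognizing the preimage of a subbasic set as the image of a hom-map of $\pi^{\tau}p$; once this identification is made, openness is delivered verbatim by Theorem \ref{embedding}. Everything in the second assertion---well-definedness of components, naturality, and the two functor axioms---is formal and uses only the uniqueness of path lifting that a semicovering possesses by definition.
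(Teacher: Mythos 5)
Your proposal is correct and rests on exactly the same pillars as the paper's proof: the identification of $\{[\alpha]\mid y\cdot[\alpha]=y'\}$ with the image of the hom-map of $\pi^{\tau}p$ via part 3 of Lemma \ref{contliftingprops}, and the openness of that image from Theorem \ref{embedding}. The only cosmetic difference is that you verify continuity of $\mathscr{M}p$ directly on the subbasis of the pointwise-convergence topology, whereas the paper first deduces continuity of the action map $p^{-1}(x_1)\times\pi^{\tau}X(x_1,x_2)\ra p^{-1}(x_2)$ and then passes to the adjoint using that discrete fibers are locally compact Hausdorff; both routes are sound and equivalent in content.
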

\begin{proof}
Suppose $p(y_i)=x_i$, $i=1,2$. Since the fibers of $p$ are discrete and $Im(\pi p\colon\pi Y(y_1,y_2)\to \pi X(x_1,x_2))=\{[\alpha]\in \pi X(x_1,x_2)|y_{1}\cdot [\alpha]=y_{2}\}$ is open in $\pi^{\tau} X(x_1,x_2)$ by \ref{embedding}, each  \[p^{-1}(x_1)\times \pi^{\tau} X(x_1,x_2)\ra p^{-1}(x_2)\text{ , }([\alpha],y)\mapsto y\cdot [\alpha]\]of the action of $\pi X$ on $Y$ is continuous. Since discrete spaces are locally compact Hausdorff, the adjoint \[\mathscr{M}p\colon\pi^{\tau} X(x_1,x_2)\ra \set(p^{-1}(x_1),p^{-1}(x_2))\text{ where }\mathscr{M}p([\alpha])(y)=y\cdot [\alpha]\]is continuous. Thus $\mathscr{M}p\colon\pi^{\tau} X\ra \set$ is a $\spaces$-functor. A morphism $f\colon Y\to Y'$ of coverings $\pyx$ and $p'\colon Y\ra X$ induces the natural transformation $\mathscr{M}f\colon\mathscr{M}p\ra \mathscr{M}p'$ with components $f\colon p^{-1}(x)\to (p ')^{-1}(x)$.
\end{proof}
\begin{corollary}
For each $x_0\in X$, monodromy of a semicovering $\pyx$ restricts to a continuous group homomorphism $\pitx \ra Homeo(p^{-1}(x_0))$.
\end{corollary}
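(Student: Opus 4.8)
The plan is to obtain this corollary as a direct specialization of the preceding result that $\mathscr{M}p\colon\pi^{\tau} X\ra \set$ is a $\spaces$-functor. First I would restrict attention to the vertex group at $x_0$. By construction the hom-space $\pi^{\tau} X(x_0,x_0)$ is precisely the topological group $\pitx$, and the $\spaces$-functor $\mathscr{M}p$ supplies a continuous function
\[\mathscr{M}p\colon\pitx=\pi^{\tau} X(x_0,x_0)\ra \set\big(p^{-1}(x_0),p^{-1}(x_0)\big),\qquad \mathscr{M}p([\alpha])(y)=y\cdot[\alpha].\]
Continuity here is exactly the content of the previous corollary read with $x_1=x_2=x_0$, so no fresh continuity argument is needed.

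Next I would identify where this map lands. Since every class $[\alpha]\in\pi_{1}(X,x_0)$ is invertible with inverse $[\alpha^{-1}]$, functoriality of $\mathscr{M}p$ forces $\mathscr{M}p([\alpha])$ to be a bijection of $p^{-1}(x_0)$, with inverse $\mathscr{M}p([\alpha^{-1}])$; and because the fiber $p^{-1}(x_0)$ is discrete, every such bijection is automatically a self-homeomorphism. Hence the image of $\mathscr{M}p$ is contained in $Homeo(p^{-1}(x_0))$, and corestricting produces the desired map $\pitx\ra Homeo(p^{-1}(x_0))$.

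It then remains to check that this corestriction is a continuous group homomorphism for the intended topology. As $p^{-1}(x_0)$ is discrete its compact subsets are finite, so the compact-open topology on $\set(p^{-1}(x_0),p^{-1}(x_0))$ coincides with the topology of pointwise convergence, and the subspace topology it induces on $Homeo(p^{-1}(x_0))$ is the usual permutation topology. Continuity of the corestriction is inherited from the first step, while functoriality of $\mathscr{M}p$ makes the assignment a group (anti)homomorphism on the vertex group, following the composition convention of Lemma \ref{contliftingprops} under which the right action $y\mapsto y\cdot[\alpha]$ is covariant; composing with the continuous inversion of $Homeo(p^{-1}(x_0))$ if necessary yields a genuine homomorphism.

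The only genuinely delicate point is the last one: verifying that $Homeo(p^{-1}(x_0))$, with the topology it inherits as a subspace of the mapping space $\set(p^{-1}(x_0),p^{-1}(x_0))$, is itself a topological group — in particular that inversion is continuous for the topology of pointwise convergence on a discrete set. This is a standard fact about symmetric groups of discrete sets, and once it is in hand the corollary follows formally from the previous result with essentially no further computation.
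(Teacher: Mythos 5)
Your proposal is correct and matches the paper's (implicit) argument: the corollary is stated there without proof precisely because it is the specialization of the preceding corollary to the vertex group $\pi^{\tau}X(x_0,x_0)=\pitx$, with functoriality giving that each $\mathscr{M}p([\alpha])$ is a bijection of the discrete fiber and hence a self-homeomorphism. Your additional care about the pointwise-convergence topology on $Homeo(p^{-1}(x_0))$ and the right-action/anti-homomorphism convention is sound and simply makes explicit what the paper glosses over.
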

Our main result is that $\mathscr{M}\colon\scovx\ra \mathbf{TopFunc}(\pi^{\tau} X,\set)$ is an isomorphism of categories for all $X$ in the class of (locally wep-connected) spaces introduced in the next section. The motivation for extending beyond locally path connected spaces to this class lies in potential applications to the theory of topological groups and $\spaces$-groupoids.
\section{Local properties of endpoints of paths} \label{localpropertiesofendpointsofpaths}
The fundamental group $\pi_{1}^{\tau}$ naturally realizes many important topological groups on spaces which are not locally path connected. We work to make our theory of semicoverings apply to such spaces. Since application to arbitrary spaces is unrealistic (recall Zeeman's example mentioned in the introduction), we must specify exactly which non-locally path connected spaces are to be included in our theory. The two notions of wep- and local wep-connectedness below generalize the notion of local path connectedness to suit this need. Definition \ref{localpathconn} appeared first in \cite{Br10.2} since it is precisely what is needed for the topological van Kampen theorem.
\begin{definition} \label{localpathconn}  \emph{Let $\alpha\colon I\ra X$ be a path.
\begin{enumerate}
\item $\alpha$ is \textit{\we} if for every open neighborhood $U$ of $\alpha$ in $\px$ there are open neighborhoods $V_0,V_1$ of $\alpha(0),\alpha(1)$ in $X$ respectively such that for every $a\in V_0, b\in V_1$ there is a path $\beta\in U$ with $\beta(0)=a$ and $\beta(1)=b$.
\item $\alpha$ is \textit{\wt} if for every open neighborhood $U$ of $\alpha$ in $(\px)_{\alpha(0)}$ there is an open neighborhood $V_1$ of $\alpha(1)$ such that for each $b\in V_1$, there is a path $\beta\in U$ with $\beta(1)=y$. 
\end{enumerate}
A space $X$ is \textit{{\stc}} if every pair of points in $X$ can be connected by a {\we} path.}
\end{definition}
Some intuition for {\we} and {\wt} paths is given in \cite{Br10.2}.
\begin{proposition} \label{quotienteval}
If $X$ is wep-connected and $x_0\in X$, the evaluation map $ev_1\colon\pxxo\ra X$, $\alpha\mapsto\alpha(1)$ is quotient.
\end{proposition}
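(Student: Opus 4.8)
The plan is to show that $ev_1\colon\pxxo\ra X$ is quotient by verifying the two defining conditions: that $ev_1$ is surjective (onto $X$), and that a subset $A\subseteq X$ is open whenever its preimage $ev_1^{-1}(A)$ is open in $\pxxo$. Surjectivity is immediate from wep-connectedness, since every point $x$ can be joined to $x_0$ by a (well-ended, hence in particular) path $\alpha$ with $\alpha(0)=x_0$ and $\alpha(1)=x$, so $x=ev_1(\alpha)$. Continuity of $ev_1$ is standard for the compact-open topology, so the entire content lies in the quotient (``co-continuity'') direction.

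**First I would** fix a subset $A\subseteq X$ with $ev_1^{-1}(A)$ open in $\pxxo$ and a point $x\in A$; the goal is to produce an open neighborhood of $x$ contained in $A$. Choose a well-ended path $\alpha$ from $x_0$ to $x$ (using wep-connectedness). Then $\alpha\in ev_1^{-1}(A)$, which is open, so there is a basic open neighborhood $U$ of $\alpha$ in $\pxxo$ with $U\subseteq ev_1^{-1}(A)$.

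**The key step** is to invoke the well-ended property of $\alpha$ applied to this neighborhood $U$. By Definition \ref{localpathconn}(1), there are open neighborhoods $V_0,V_1$ of $\alpha(0)=x_0$ and $\alpha(1)=x$ in $X$ such that for every $a\in V_0$ and $b\in V_1$ there is a path $\beta\in U$ with $\beta(0)=a$ and $\beta(1)=b$. Since $x_0\in V_0$, I may take $a=x_0$: for each $b\in V_1$ there is $\beta\in U\subseteq ev_1^{-1}(A)$ with $\beta(0)=x_0$ and $\beta(1)=b$. Hence $b=ev_1(\beta)\in A$, which shows $V_1\subseteq A$. Thus $V_1$ is an open neighborhood of $x$ inside $A$, and since $x\in A$ was arbitrary, $A$ is open. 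This establishes that $ev_1$ is quotient.

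**The main obstacle** is conceptual rather than technical: one must recognize that the well-ended condition is exactly what upgrades ``$ev_1$ is continuous and surjective'' to ``$ev_1$ is quotient,'' by supplying, for each target point $x$ and each open neighborhood of a chosen path to $x$ in path space, a genuine open neighborhood of $x$ in $X$ all of whose points are reachable by paths staying in that neighborhood. It is worth noting that the weaker well-targeted condition (Definition \ref{localpathconn}(2)) would suffice here, since we only ever fix the initial endpoint at $x_0$ and vary the terminal endpoint; the argument uses $V_0$ only to locate $x_0$. The rest is routine point-set verification with no hidden difficulty.
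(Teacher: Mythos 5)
Your proposal is correct and follows essentially the same route as the paper: choose a path from $x_0$ to $x$ with the good-endpoint property, apply that property to the open set $ev_1^{-1}(A)$ viewed as a neighborhood of the path, and conclude that an open neighborhood of $x$ lies in $A$. One point needs tightening, however. Definition \ref{localpathconn}(1) quantifies over neighborhoods of $\alpha$ that are open in the full path space $\px$, whereas your $U$ is open only in the subspace $\pxxo$, which is generally \emph{not} open in $\px$; as literally stated, your key step is vacuous, since every path in $U$ starts at $x_0$ and so no $\beta\in U$ can satisfy $\beta(0)=a$ for $a\neq x_0$. The fix is immediate: take a basic open set $\mathcal{V}$ in $\px$ with $\alpha\in\mathcal{V}$ and $\mathcal{V}\cap\pxxo\subseteq ev_1^{-1}(A)$, apply well-endedness to $\mathcal{V}$, and note that the path produced for $a=x_0$ automatically lies in $\mathcal{V}\cap\pxxo$, hence in $ev_1^{-1}(A)$. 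The paper sidesteps this issue entirely by working with a well-targeted path (invoking the remark that wep-connectedness yields well-targeted paths from any fixed basepoint), whose defining condition is already formulated for neighborhoods in $(\mathcal{P}X)_{\alpha(0)}$ -- which is precisely the simplification you identify in your closing remark.
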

\begin{proof}
Suppose $x\in U\subseteq X$ such that $ev_{1}^{-1}(U)$ is open in $\pxxo$. Since $X$ is \stc, there is a {\wt} path $\gamma\in \pxxo$ ending at $x$. Since $ev_{1}^{-1}(U)$ is an open neighborhood of $\gamma$, there is an open neighborhood $V$ of $x$ in $X$ such that for each $v\in V$ there is a path $\alpha\in ev_{1}^{-1}(U)$ from $x_0$ to $v$. Thus $V\subseteq U$.
\end{proof}
\begin{proposition}
If the path components of $X$ are wep-connected, then $\pi_{0}^{qtop}(X)$ is discrete, i.e. the path components of $X$ are open.
\end{proposition}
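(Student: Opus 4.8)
The plan is to show that each path component of $X$ is open in $X$, since $\pi_0^{qtop}(X)$ being discrete is exactly the statement that each path component (the preimage of a point under the quotient map $X \to \pi_0 X$) is open. Fix a point $x_0 \in X$ and let $C$ be its path component; I want to produce, for each $x \in C$, an open neighborhood contained in $C$.

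First I would observe that by hypothesis the path component $C$, being \stc, satisfies the conclusion of Proposition \ref{quotienteval}: the evaluation map $ev_1 \colon (\mathcal{P}C)_{x_0} \to C$ is quotient. But the more direct route is to work locally without invoking the quotient statement in full. Given $x \in C$, since $C$ is \stc\ there is a {\wt} path $\gamma \in (\mathcal{P}X)_{x_0}$ lying in $C$ and ending at $x$ (here I use that $C$ is path connected, so a path from $x_0$ to $x$ exists, and that wep-connectedness of $C$ lets us take it to be well-targeted). Applying the definition of {\wt} to the open neighborhood $U = (\mathcal{P}X)_{x_0}$ (the whole space, which is trivially open and contains $\gamma$) is too weak; instead I apply it to an open set witnessing that the path stays inside $C$. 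The key point is that $(\mathcal{P}C)_{x_0}$ is open in $(\mathcal{P}X)_{x_0}$ is \emph{not} automatic, so I would argue directly: take $U = (\mathcal{P}X)_{x_0}$ and use that {\wt}ness of $\gamma$ yields an open neighborhood $V$ of $x = \gamma(1)$ in $X$ such that every $v \in V$ is the endpoint of some path $\beta \in (\mathcal{P}X)_{x_0}$ starting at $x_0$. Any such $v$ is thus path-connected to $x_0$ and lies in $C$, giving $V \subseteq C$.

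The main obstacle, and the step to handle carefully, is ensuring the {\wt} path $\gamma$ genuinely exists and that the neighborhood it produces lands inside $C$ rather than merely inside $X$. The resolution is that wep-connectedness is assumed for each path component, so within $C$ every pair of points is joined by a {\we} (hence {\wt}) path; concatenating an arbitrary path from $x_0$ to $x$ with trivial adjustments and invoking the definition of wep-connectedness of $C$ supplies a well-targeted $\gamma$ from $x_0$ to $x$ inside $C$. Since any path starting at $x_0$ necessarily remains in the single path component $C$, the endpoints $v$ produced by the {\wt} condition automatically satisfy $v \in C$, so $V \subseteq C$ as required.

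Thus every $x \in C$ has an open neighborhood $V \subseteq C$, so $C$ is open. Since $x_0$ was arbitrary, every path component of $X$ is open, which is precisely the assertion that $\pi_0^{qtop}(X)$ is discrete. I expect the entire argument to be a direct application of the {\wt} condition together with the observation that endpoints of paths based at $x_0$ cannot escape the path component of $x_0$; the only subtlety worth a sentence is confirming that wep-connectedness of the path component delivers a well-targeted path ending at the chosen point.
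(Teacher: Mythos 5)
Your proof is correct and takes essentially the same approach as the paper's: both apply the defining property of the hypothesized well-ended/well-targeted path to the trivial open neighborhood (the whole path space) and observe that the resulting open set of endpoints must lie in the path component. The only cosmetic difference is that the paper invokes well-endedness directly, obtaining a neighborhood $V_0$ of the initial point of a path starting at $x$, whereas you pass to the (implied) well-targeted formulation and obtain a neighborhood of the terminal point; nothing substantive changes.
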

\begin{proof}
Let $x\in X$ and $\alpha$ be any well-ended path such that $\alpha(0)=x$. Since $\px$ is an open neighborhood of $\alpha$, there are open neighborhoods $V_0$, $V_1$ of $x$, $\alpha(1)$ respectively such that for each $a\in V_0$, $b\in V_1$ there is a path $\gamma\in \px$ from $a$ to $b$. Since for any $a\in V_0$, there is a path $\gamma$ from $a$ to $\alpha(1)$, it is clear that $V_0$ is contained in the path component of $x$.
\end{proof}
The previous two propositions indicate that spaces with wep-connected path components are suitable for application of Lemma \ref{generalliftinglemma}. Unfortunately, difficulties arise as one attempts to construct semicoverings of general wep-connected spaces. We are then motivated to slightly strengthen this notion.
\begin{definition} \label{localpathconn2} \emph{Let $\alpha\colon I\ra X$ be a path.
\begin{enumerate}
\item $\alpha$ is \textit{\ee} if for every open neighborhood $U$ of $\alpha$ in $\px$ there are open neighborhoods $V_0,V_1$ of $\alpha(0),\alpha(1)$ in $X$ respectively such that for every $a\in V_0, b\in V_1$ there is a well-ended path $\beta\in U$ with $\beta(0)=a$ and $\beta(1)=b$.
\item $\alpha$ is \textit{\et} if for every open neighborhood $U$ of $\alpha$ in $(\px)_{\alpha(0)}$ there is an open neighborhood $V_1$ of $\alpha(1)$ such that for each $b\in V_1$, there is a well-targeted path $\beta\in U$ with $\beta(1)=y$.
\end{enumerate}
A space $X$ is \textit{{\estc}} if every pair of points in $X$ can be connected by a {\ee} path.}
\end{definition}
\begin{remark}\emph{
The definitions of {\we} and {\wt} paths address the same property of the underlying space. It is shown in \cite{Br10.2} that for fixed $x\in X$, $X$ is {\stc} if and only if for each $x'\in X$, there is a {\wt} path from $x$ to $x'$. The analogous statement holds for {\estc} spaces and locally well-targeted paths. We repeatedly call upon this fact without reference. In situations where it is necessary to use a basepoint, it is more convenient to work with {\wt} and {\et} paths.}
\end{remark}
Clearly every {\estc} space is \stc. Examples of spaces which are {\stc} but not {\estc} exist but are complicated and would distract from our purposes. Additionally, if $\alpha\colon I\ra X$ is a path and $X$ is locally path connected at $\alpha(0)$ and $\alpha(1)$, then $\alpha$ is {\we} \cite[Prop. 6.5]{Br10.2}. It follows immediately that any path connected, locally path connected space is locally wep-connected. There are many non-locally path connected spaces which are locally wep-connected (for instance, see Prop. \ref{susisslwe} below). Since working with (locally) well-ended paths requires working with the compact-open topology, we make heavy use of the following machinery and notation (as is used in \cite{Br10.1,Br10.2}) for dealing with operating on neighborhoods of paths. 

Let $\mathcal{U}=\bigcap_{j=1}^{n}\langle C_j,U_j\rangle$ be an open neighborhood of a path $p\in \px$. Then $\mathcal{U}_{A}=\bigcap_{A\cap C_j\neq \emptyset}\langle T_{A}^{-1}(A\cap C_j),U_j\rangle$ is an open neighborhood of $p_{A}$. If $A=\{t\}$ is a singleton, then $\mathcal{U}_{A}=\bigcap_{t\in C_j}\langle I,U_j\rangle=\langle I,\bigcap_{t\in C_j}U_j\rangle$. On the other hand, if $p=q_{A}$ for some path $q\in \px$, then $\mathcal{U}^{A}=\bigcap_{j=1}^{n}\langle T_{A}(C_j),U_j\rangle$ is an open neighborhood of $q$. If $A=\{t\}$ so that $p_A=c_{p(t)}$, let $\mathcal{U}^{A}=\bigcap_{j=1}^{n}\langle \{t\},U_j\rangle$. The following observation illustrates how one may ``place one neighborhood after another" using this notation.
\begin{lemma} Let $\mathcal{U}=\bigcap_{j=1}^{n}\langle C_j,U_j\rangle$ be an open neighborhood in $\px$ such that $\bigcup_{j=1}^{n}C_j=I$. Then
\begin{enumerate}
\item For any closed interval $A\subseteq I$, $(\mathcal{U}^{A})_{A}=\mathcal{U}\subseteq (\mathcal{U}_{A})^{A}$
\item If $0=t_0\leq t_1\leq t_2\leq ...\leq t_n=1$, then $\mathcal{U}= \bigcap_{i=1}^{n}( \mathcal{U}_{[t_{i-1},t_i]})^{[t_{i-1},t_i]}.$
\end{enumerate}
\end{lemma}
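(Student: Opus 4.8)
The plan is to reduce both parts to a single closed-form description of the composite operations, obtained by mechanically unwinding the definitions of $(\cdot)_A$ and $(\cdot)^A$, and then to invoke two elementary facts about the subbasic sets $\langle K,U\rangle$: first, subbasic sets with a common open target combine under intersection, $\bigcap_k \langle K_k,U\rangle = \langle \bigcup_k K_k,U\rangle$; second, shrinking the compact set weakens the constraint, so $\langle K',U\rangle \supseteq \langle K,U\rangle$ whenever $K'\subseteq K$. I will also use repeatedly that $T_A\colon I\ra A$ is a homeomorphism, so that $T_A(C_j)\subseteq A$, $T_A^{-1}(T_A(C_j))=C_j$, and $T_A(T_A^{-1}(A\cap C_j))=A\cap C_j$.

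For part (1), I would first compute $(\mathcal{U}^A)_A$. By definition $\mathcal{U}^A = \bigcap_{j=1}^n \langle T_A(C_j),U_j\rangle$, and since each $T_A(C_j)\subseteq A$ we have $A\cap T_A(C_j)=T_A(C_j)$, which is nonempty exactly when $C_j$ is. Applying $(\cdot)_A$ therefore reinstates $\langle T_A^{-1}(T_A(C_j)),U_j\rangle = \langle C_j,U_j\rangle$ for each such $j$, giving $(\mathcal{U}^A)_A=\bigcap_j\langle C_j,U_j\rangle=\mathcal{U}$. For the inclusion I would compute the other composite: $\mathcal{U}_A=\bigcap_{A\cap C_j\neq\emptyset}\langle T_A^{-1}(A\cap C_j),U_j\rangle$, and placing it back via $(\cdot)^A$ yields the clean formula $(\mathcal{U}_A)^A=\bigcap_{A\cap C_j\neq\emptyset}\langle A\cap C_j,U_j\rangle$. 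Since $A\cap C_j\subseteq C_j$, each surviving factor $\langle A\cap C_j,U_j\rangle$ contains $\langle C_j,U_j\rangle$, and the factors with $A\cap C_j=\emptyset$ are simply dropped; both effects only enlarge the set, so $\mathcal{U}\subseteq(\mathcal{U}_A)^A$. The equality can fail here precisely because these dropped or relaxed constraints are genuinely lost.

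For part (2), I would take $A_i=[t_{i-1},t_i]$ and feed the formula just derived into the intersection: $\bigcap_{i=1}^n (\mathcal{U}_{A_i})^{A_i} = \bigcap_{i}\bigcap_{A_i\cap C_j\neq\emptyset}\langle A_i\cap C_j,U_j\rangle$. Regrouping the double intersection by the index $j$ and applying the union law for a fixed target $U_j$ gives $\bigcap_j \langle \bigcup_i (A_i\cap C_j),U_j\rangle$. Because $\bigcup_i A_i=[t_0,t_n]=I$, we get $\bigcup_i(A_i\cap C_j)=\left(\bigcup_i A_i\right)\cap C_j=C_j$, so the right-hand side collapses to $\bigcap_j\langle C_j,U_j\rangle=\mathcal{U}$. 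Here the hypothesis $\bigcup_j C_j=I$ is the companion guarantee that every parameter value is constrained; the covering actually invoked is $\bigcup_i A_i=I$, which is automatic from the subdivision. A degenerate subinterval $A_i=\{t\}$ (when $t_{i-1}=t_i$) is handled by the stated singleton conventions, under which $(\mathcal{U}_{A_i})^{A_i}=\bigcap_{t\in C_j}\langle\{t\},U_j\rangle$ agrees with the formula $\bigcap_{A_i\cap C_j\neq\emptyset}\langle A_i\cap C_j,U_j\rangle$, so the same computation applies verbatim.

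There is no deep obstacle here; the statement is essentially a bookkeeping identity. The only real care needed is in tracking which constraints survive each operation — in particular, that $(\cdot)_A$ retains only the indices $j$ with $A\cap C_j\neq\emptyset$, so the composites are not literal inverses and part (1) gives an inclusion rather than an equality on that side — and in verifying the reparametrization identities for $T_A$ together with the singleton conventions so that the unified formula $(\mathcal{U}_A)^A=\bigcap_{A\cap C_j\neq\emptyset}\langle A\cap C_j,U_j\rangle$ is valid in every case.
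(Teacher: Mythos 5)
Your proof is correct: the paper states this lemma without proof (it is offered as an ``observation'' immediately after the definitions of $\mathcal{U}_A$ and $\mathcal{U}^A$), and your mechanical unwinding via the identities $T_A^{-1}(T_A(C_j))=C_j$, $(\mathcal{U}_A)^A=\bigcap_{A\cap C_j\neq\emptyset}\langle A\cap C_j,U_j\rangle$, and $\bigcap_k\langle K_k,U\rangle=\langle\bigcup_k K_k,U\rangle$ is exactly the routine verification intended. Your side observations are also accurate: the hypothesis $\bigcup_{j}C_j=I$ is never actually invoked (only $\bigcup_i[t_{i-1},t_i]=I$ is), part (1) holds with equality only on the $(\mathcal{U}^A)_A$ side because $(\cdot)_A$ discards indices with $A\cap C_j=\emptyset$, and degenerate subintervals in part (2) are consistent with the computation under the paper's singleton conventions.
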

For brevity, if $\mathcal{U}=\bigcap_{i=1}^{m}\langle C_{i},U_i\rangle$ and $\mathcal{V}=\bigcap_{j=1}^{n}\langle D_j,V_j\rangle$ are neighborhoods of $\alpha$ and $\beta$ respectively where $\alpha(1)=\beta(0)$, we write $\mathcal{U}\mathcal{V}$ for the neighborhood $\mathcal{U}^{\left[0,\frac{1}{2}\right]}\cap \mathcal{V}^{\left[\frac{1}{2},1\right]}$ of $\alpha\ast\beta$.

The author refers to the space $\sus=\frac{X\times I}{X\times \{0,1\}}$ of the next proposition (studied in detail in \cite{Br10.1}) as the \textit{generalized wedge of circles} on the topological space $X$ due to the fact that $\pi_{1}^{\tau}(\sus,x_0)$ is naturally isomorphic to the free topological group $F_{M}(\pi_{0}^{qtop}(X))$. It is desirable that generalized wedges be locally wep-connected for application of semicoverings to these groups.
\begin{proposition} \label{susisslwe}
For every space $X$, the generalized wedge of circles $\sus$ is locally wep-connected. Moreover, a space obtained by attaching n-cells, $n\geq 2$ to $\sus$ is locally wep-connected.
\end{proposition}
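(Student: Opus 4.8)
The plan is to reduce both assertions to one local statement and then verify it with explicit families of ``vertical'' paths. The first ingredient is that concatenation respects (local) well-targetedness on the right: if $\beta$ is a \wt\ (resp.\ \et) path in a space $Z$ and $\alpha$ is any path with $\alpha(1)=\beta(0)$, then $\alpha\ast\beta$ is \wt\ (resp.\ \et). This is immediate from continuity of concatenation, since left translation $L_{\alpha}\colon(\mathcal{P}Z)_{\beta(0)}\ra(\mathcal{P}Z)_{\alpha(0)}$, $\delta\mapsto\alpha\ast\delta$, is continuous: a neighborhood $U$ of $\alpha\ast\beta$ pulls back to a neighborhood of $\beta$, the (local) well-targetedness of $\beta$ supplies paths to all targets near $\beta(1)=(\alpha\ast\beta)(1)$, and prepending $\alpha$ returns them to $U$. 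Together with the remark following Definition \ref{localpathconn2}, which characterizes \estc\ spaces by the existence of \et\ paths out of a single basepoint, this yields the criterion I will actually check: \emph{a path connected space is \estc\ provided every point is the endpoint of some \et\ path} (prepend any path from the basepoint to the start of such a tail).

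For $\sus$, write $q\colon X\times I\ra\sus$ for the quotient collapsing $X\times\{0,1\}$ to the wedge point $x_{0}$; then $q$ restricts to a homeomorphism of $X\times(0,1)$ onto the open set $\sus\setminus\{x_{0}\}$. For $(x,t)\in X\times(0,1)$ let $\gamma_{x,t}(s)=q(x,st)$ be the vertical path from $x_{0}$ to $q(x,t)$. Since the adjoint map $(x,t,s)\mapsto q(x,st)$ is continuous and $I$ is locally compact Hausdorff, $(x,t)\mapsto\gamma_{x,t}$ is a continuous map $X\times(0,1)\ra(\mathcal{P}\sus)_{x_{0}}$. This one continuity statement does the work: given a neighborhood $U$ of $\gamma_{x_{1},t_{1}}$, the image under $q$ of $\{(x,t):\gamma_{x,t}\in U\}$ is an open neighborhood $V_{1}$ of $q(x_{1},t_{1})$ each of whose points $q(x,t)$ is the endpoint of $\gamma_{x,t}\in U$, so $\gamma_{x_{1},t_{1}}$ is \wt; feeding this back (the witnesses $\gamma_{x,t}$ are now themselves \wt) shows $\gamma_{x_{1},t_{1}}$ is \et. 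Hence every $q(x_{1},t_{1})$ with $t_{1}\in(0,1)$ is the endpoint of a \et\ path, and joining through $x_{0}$ shows $\sus$ is path connected. For the wedge point I would verify that $c_{x_{0}}$ is \et: given a basic neighborhood $\langle I,W\rangle$, a tube-lemma argument at $X\times\{0,1\}\subseteq q^{-1}(W)$ shows that the set of $q(x,t)$ joined to $x_{0}$ by a short vertical segment lying in $W$ is an open neighborhood of $x_{0}$, and these short vertical paths are \wt\ by the above. The criterion now gives that $\sus$ is \estc.

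Now let $Y=\sus\cup_{f}\coprod_{\lambda}D^{n}_{\lambda}$ with $n\ge 2$, characteristic maps $\Phi_{\lambda}$, and attaching maps $f_{\lambda}\colon S^{n-1}\ra\sus$; $Y$ is path connected. Each open cell $\Phi_{\lambda}(\mathrm{int}(D^{n}))$ is open in $Y$ and homeomorphic to $\mathbb{R}^{n}$, hence locally path connected, so any short path ending in a cell interior is \we\ by \cite[Prop.~6.5]{Br10.2}, and since nearby targets lie in the same region, even \et. This settles the criterion for cell-interior points. For a point $p\in\sus\subseteq Y$ I again use the vertical path $\gamma$ of part (a), but must re-establish its well-targetedness \emph{in $Y$}, where nearby targets now include cell-interior points (this happens exactly when $p\in f_{\lambda}(S^{n-1})$). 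Crucially, well-targetedness asks only for \emph{some} path to each nearby target, so there is no circularity: I first obtain unconditional well-targetedness in $Y$ by enlarging the vertical family with paths that follow $\gamma$ and then graft a short radial segment $\Phi_{\lambda}\circ r$ into the cell, where $r$ runs from $d/|d|\in S^{n-1}$ to $d\in\mathrm{int}(D^{n})$. Upgrading $\gamma$ to \et\ in $Y$ is then automatic: $\sus$-targets are reached by vertical paths (now known \wt\ in $Y$) and cell-targets by grafted paths ending in the locally path connected cell interiors (hence \wt). By the criterion, $Y$ is \estc.

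The main obstacle is the cell-direction half of the well-targetedness statement in $Y$: producing, uniformly over a neighborhood of $p\in f_{\lambda}(S^{n-1})$, grafted paths that reach every nearby cell-interior point while staying inside an arbitrary prescribed neighborhood of $\gamma$. This is exactly where $n\ge 2$ enters—then $S^{n-1}$ is connected and the radial retraction $\mathrm{int}(D^{n})\setminus\{0\}\ra S^{n-1}$, $d\mapsto d/|d|$, is continuous, so the grafted paths form a continuous family—and where compactness of $f_{\lambda}^{-1}(p)\subseteq S^{n-1}$ yields the single target neighborhood $V_{1}$ controlling all of them at once.
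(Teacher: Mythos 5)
Your proposal is correct, and its geometric core is the same as the paper's: vertical paths in $\sus$ witness their own (local) well-targetedness, and cells are handled by grafting short radial segments onto vertical paths. The execution differs in ways worth recording. The paper fixes an explicit cylindrical basic neighborhood $\left\langle K_{n}^{1},V\right\rangle\cap\bigcap_{j=2}^{n}\langle K_{n}^{j},U\wedge (a_j,b_j)\rangle$ of the vertical path and builds each witness by hand, noting that the witness again has vertical form and is therefore itself {\wt}; you instead curry $(x,t,s)\mapsto q(x,st)$ to obtain continuity of the family $(x,t)\mapsto\gamma_{x,t}$ into $(\mathcal{P}\sus)_{x_0}$, so that the set of reachable targets is open with no compact-open bookkeeping, and the self-witnessing (``feeding back'') step becomes transparent. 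You also make explicit two points the paper compresses: at the wedge point the paper appeals to local path connectedness of $\sus$ at $x_0$, while you run the tube-lemma argument directly (same geometry); and for attached cells, where the paper says only that ``a straightforward extension applies,'' you identify the real issue -- well-targetedness must be re-established in $Y$ because neighborhoods of points of $f_{\lambda}(S^{n-1})$ contain cell-interior points -- and you address it. Your prepending lemma plus the criterion ``path connected and every point is the endpoint of a {\et} path'' is a sound repackaging of the paper's Remark together with a special case of part 1 of Lemma \ref{lemmalwelwt}; since that lemma appears only after this proposition, proving the special case directly, as you do, is the right call.

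Two claims in your closing paragraph are off, though neither undermines the approach. First, connectedness of $S^{n-1}$ plays no role: continuity of $d\mapsto d/|d|$ away from $0$ and compactness of $f_{\lambda}^{-1}(p)$ hold for $n=1$ as well, and your grafting argument runs verbatim there; the hypothesis $n\geq 2$ reflects the intended application (attaching $2$-cells to kill kernel elements), not a need of this proof. Second, compactness of $f_{\lambda}^{-1}(p)$ gives uniform control of the radial segments over a neighborhood $M$ of $f_{\lambda}^{-1}(p)$ in $S^{n-1}$, but it does not by itself produce an \emph{open} $V_1$: if the cell portion of $V_1$ is a collar lying only over $M$, then $\Phi_{\lambda}^{-1}(V_1)$ fails to be open in $D^{n}$ at boundary points of $f_{\lambda}^{-1}(V_1\cap\sus)$ outside $M$, and shrinking the $\sus$-portion to force its $f_{\lambda}$-preimage into $M$ cannot in general be done simultaneously for infinitely many cells, nor without Hausdorff-type hypotheses on $\sus$ (the proposition is claimed for every $X$). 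The repair is local rather than uniform: over \emph{each} $u\in f_{\lambda}^{-1}(V_1\cap \sus)$ include exactly those interior points $d$ with $d/|d|=u$ whose radial segment is carried by $\Phi_{\lambda}$ into the final set of the prescribed basic neighborhood; this enlarged collar is open in $D^{n}$ and meets $S^{n-1}$ in precisely $f_{\lambda}^{-1}(V_1\cap\sus)$, which is what openness in the weak topology of $Y$ requires, and no compactness is needed.
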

\begin{proof}
Since $\sus$ is locally path connected at its basepoint $x_0$, pick $x\wedge t\in X\wedge (0,1)$. Let $\alpha$ be any path from $x_0$ to $x \wedge t$ satisfying $\alpha(s)=x\wedge r(s)$, and $\alpha^{-1}(x_0)=\{0\}$. A basic open neighborhood of $\alpha$ may be taken to be of the form \[\mathcal{U}=\left\langle K_{n}^{1},V\right\rangle\cap\bigcap_{j=2}^{n}\langle K_{n}^{j},U\wedge (a_j,b_j)\rangle\] where $V$ is a basic neighborhood of $x_0$ and $U$ is an open neighborhood of $x$ in $X$. If $u\wedge v\in U\wedge (a_n,b_n)$, then the path $\beta(s)=u\wedge r(s)$ from $x_0$ to $u\wedge t$ lies in $\mathcal{U}$ and also satisfies $\beta^{-1}(x_0)=\{0\}$. Let $\gamma$ be the arc $\gamma(s)=u\wedge(t+s(v-t))$ in $\{u\}\wedge (a_n,b_n)$ from $u\wedge t$ to $u\wedge v$ and $\delta$ be the path satisfying $\delta_{\left[0,\frac{2n-1}{2n}\right]}=\beta$ and $\delta_{K_{2n}^{2n}}=\gamma$. Clearly $\delta\in \mathcal{U}$ is a path from $x_0$ to $u\wedge v$ and therefore $\alpha$ is \wt. Since $\delta$ is also of the form $\delta(s)=y\wedge r'(s)$ where $\delta^{-1}(x_0)=\{0\}$, it follows that $\delta$ is \wt. Thus $\alpha$ is \et. Since n-cells are locally path connected, a straightforward extension of this argument applies to spaces obtained by attaching cells to $\sus$.
\end{proof}
\begin{corollary}
For any space $(Y,y_0)$, there is a locally wep-connected space $X$ and a map $X\to Y$ which induces an isomorphism $\pi_{1}^{\tau}(X,x_0)\ra \pi_{1}^{\tau}(Y,y_0)$ of topological groups.
\end{corollary}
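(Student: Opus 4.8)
The plan is to realize the topological group $\pi_{1}^{\tau}(Y,y_0)$ as the fundamental group of a space assembled from a generalized wedge of circles together with $2$-cells that impose the defining relations. Write $G=\pi_{1}^{qtop}(Y,y_0)$, so that $\pi_{1}^{\tau}(Y,y_0)=\tau(G)$. First I would set $Z=\Omega(Y,y_0)$ and take $X_0=\Sigma(Z_{+})$. By Proposition \ref{susisslwe}, $X_0$ is locally wep-connected, and since $\pi_{0}^{qtop}(Z)=\pi_{1}^{qtop}(Y,y_0)=G$ is exactly the underlying space of $G$, the identification recalled just before Proposition \ref{susisslwe} yields a natural isomorphism $\pi_{1}^{\tau}(X_0,x_0)\cong F_{M}(G)$ of topological groups.

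Next I would produce the comparison map. Evaluation $ev\colon X_0=\Sigma(\Omega(Y,y_0)_{+})\to Y$, $\ell\wedge t\mapsto \ell(t)$, is well-defined and continuous because each $\ell\in\Omega(Y,y_0)$ satisfies $\ell(0)=\ell(1)=y_0$. A generating loop $t\mapsto \ell\wedge t$ of $X_0$ is carried by $ev$ to the loop $\ell$ itself, so under the identification $\pi_{1}^{\tau}(X_0)\cong F_{M}(G)$ the induced homomorphism $\pi_{1}^{\tau}(ev)\colon F_{M}(G)\to \tau(G)$ agrees on generators with the canonical multiplication $m_{G}\colon F_{M}(G)\to G$ used to define $\tau$; being group homomorphisms, they coincide. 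Thus $\pi_{1}^{\tau}(ev)=m_{G}$ is precisely the topological quotient map defining $\tau(G)$, with normal kernel $N=\ker(m_{G})$.

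I would then kill $N$ by attaching $2$-cells. For each $w\in N$ choose a loop $\gamma_{w}$ in $X_0$ based at $x_0$ with $[\gamma_{w}]=w$ in $F_{M}(G)=\pi_{1}^{\tau}(X_0)$, and let $X$ be the pushout attaching a $2$-cell along each $\gamma_{w}$ (equivalently, attaching cells along a normal generating set of $N$). By Proposition \ref{susisslwe}, $X$ is again locally wep-connected. Since $m_{G}(w)=1$ for every $w\in N$, each loop $ev\circ\gamma_{w}$ is null-homotopic in $Y$ and so $ev$ extends over the attached disks to a continuous map $\overline{ev}\colon X\to Y$.

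Finally I would identify $\pi_{1}^{\tau}(X)$ using the topological van Kampen theorem of \cite{Br10.2}, whose hypotheses are met because $X$ is locally wep-connected. That theorem computes $\pi_{1}^{\tau}(X,x_0)$ as the quotient of $\pi_{1}^{\tau}(X_0,x_0)=F_{M}(G)$ by the normal closure of $\{[\gamma_{w}]\}$, namely $N$, equipped with the quotient topology; that is, $\pi_{1}^{\tau}(X)=F_{M}(G)/N$ with the quotient topology. By the very definition of $\tau$, this is $\tau(G)=\pi_{1}^{\tau}(Y,y_0)$. As $\overline{ev}_{\ast}\colon \pi_{1}^{\tau}(X)\to\pi_{1}^{\tau}(Y)$ is the identity-on-underlying-groups comparison between two copies of $F_{M}(G)/N$ carrying the same quotient topology, it is an isomorphism of topological groups. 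The hard part will be exactly this last step: one must invoke the enriched/topological van Kampen theorem and verify that attaching the (possibly enormously many) $2$-cells produces precisely the quotient \emph{group} topology on $\pi_{1}^{\tau}$, not merely the correct underlying group; reconciling these quotient topologies is where the real content lies, while the rest reduces to Proposition \ref{susisslwe} and the definition of $\tau$.
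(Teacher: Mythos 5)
Your proposal is correct and is essentially the paper's own proof: both take $X_0=\Sigma(\Omega(Y,y_0)_{+})$ with the counit/evaluation map, attach $2$-cells along loops representing the kernel of the induced homomorphism, cite Proposition \ref{susisslwe} for local wep-connectedness, and invoke the quotient-map result of \cite{Br10.2} (Corollary 5.3 there) to conclude that the inclusion $X_0\hookrightarrow X$ induces a topological quotient on $\pi_{1}^{\tau}$ with the same kernel, forcing the desired isomorphism. Your explicit identification $\pi_{1}^{\tau}(X_0,x_0)\cong F_{M}(G)$ with $\pi_{1}^{\tau}(ev)=m_{G}$ merely unwinds why the counit induces a topological quotient map, which the paper asserts directly as input from \cite{Br10.2}.
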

\begin{proof}
The counit $cu\colon\Sigma\left(\Omega(Y,y_0)_+\right)\ra Y$ of the loop-suspension adjunction induces a quotient map \[\pi_{1}^{\tau}(cu)\colon\pi_{1}^{\tau}\left(\Sigma\left(\Omega(Y,y_0)_+\right),x_0\right)\ra \pi_{1}^{\tau}(Y,y_0)\] of topological groups. For each $[\beta]\in \ker \left(\pi_{1}^{\tau}(cu)\right)$, attach a 2-cell using a representative loop $\beta\colon S^1\ra \Sigma\left(\Omega(Y,y_0)_+\right)$.
Let $X$ be the resulting space, which is locally wep-connected by the previous proposition. The inclusion $j\colon\Sigma\left(\Omega(Y,y_0)_+\right)\hookrightarrow X$ induces a quotient map $\pi_{1}^{\tau}(j)\colon\pi_{1}^{\tau}\left(\Sigma\left(\Omega(Y,y_0)_+\right),x_0\right)\ra \pi_{1}^{\tau}(X,x_0)$ of topological groups by Corollary 5.3 of \cite{Br10.2}. Since $\ker\left(\pi_{1}^{\tau}(j)\right)=\ker\left(\pi_{1}^{\tau}(cu)\right)$, there is a map $X\ra Y$ which induces the desired isomorphism
\end{proof}
The following lemma requires simple arguments for the compact-open topology of path spaces. The statements with ``locally" included follow directly from the analogous statements which do not.
\begin{lemma} \label{lemmalwelwt} Let $X$ be a space and $\alpha\colon I\ra X$ be a path.
\begin{enumerate}
\item If there is a $0\leq t\leq 1$ such that $\alpha_{[t,1]}$ is {\wt} (resp. \et), then $\alpha$ is {\wt} (resp. \et).
\item If there are $0\leq s\leq t\leq 1$ such that $\alpha_{[t,1]}$ and $\left(\alpha_{[0,s]}\right)^{-1}$ are {\wt} (resp. \et), then $\alpha$ is {\we} (resp. \ee).
\item The reverse of a {\we} (resp. {\ee}) path is {\we} (resp. \ee).
\item The concatenation of {\we} (resp. {\ee}) paths is {\we} (resp. \ee).
\end{enumerate}
\end{lemma}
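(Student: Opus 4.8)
The plan is to prove all four statements in their {\we}/{\wt} (non-local) form first; the {\ee}/{\et} versions then follow immediately by the same constructions, since in each case the modified paths I build are exhibited as restrictions or concatenations to which the just-proved non-local statement already applies, forcing them to inherit the local property. The single recurring tool is the neighborhood calculus of the previous lemma: any basic $\mathcal{U}=\bigcap_{j}\langle C_j,U_j\rangle$ about a path $\delta$ decomposes, for a subdivision $0=t_0\leq\dots\leq t_m=1$, as $\mathcal{U}=\bigcap_i(\mathcal{U}_{[t_{i-1},t_i]})^{[t_{i-1},t_i]}$, where membership of $\delta$ in the $i$-th factor depends only on the restriction $\delta_{[t_{i-1},t_i]}$ and is equivalent to $\delta_{[t_{i-1},t_i]}\in\mathcal{U}_{[t_{i-1},t_i]}$. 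Thus to produce a nearby path one modifies $\delta$ on some subintervals while leaving the others fixed, and checks membership factor by factor.

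For (1), given a neighborhood $\mathcal{U}$ of $\alpha$ in $(\px)_{\alpha(0)}$, I would apply the well-targetedness of $\alpha_{[t,1]}$ to the neighborhood $\mathcal{U}_{[t,1]}$ of $\alpha_{[t,1]}$, obtaining a neighborhood $V_1$ of $\alpha(1)$ so that each $b\in V_1$ is the endpoint of some $\gamma\in\mathcal{U}_{[t,1]}$ starting at $\alpha(t)$. Gluing $\alpha_{[0,t]}$ to $\gamma$ gives a path $\beta$ with $\beta_{[0,t]}=\alpha_{[0,t]}$ and $\beta_{[t,1]}=\gamma$; since $\alpha$ already lies in $(\mathcal{U}_{[0,t]})^{[0,t]}$ and $\gamma\in\mathcal{U}_{[t,1]}$, the factor-by-factor check puts $\beta\in\mathcal{U}$, with $\beta(0)=\alpha(0)$ and $\beta(1)=b$, proving $\alpha$ is {\wt}.

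For (2), I would use the subdivision $0\leq s\leq t\leq 1$ and keep the middle piece $\alpha_{[s,t]}$ fixed. Applying well-targetedness of $\alpha_{[t,1]}$ to $\mathcal{U}_{[t,1]}$ yields $V_1$ and end-segments $\gamma_1$ exactly as in (1). The start is handled by reversal, which is the one place demanding care: since reversal is a self-homeomorphism of $\px$ carrying the neighborhood $\mathcal{U}_{[0,s]}$ of $\alpha_{[0,s]}$ to a neighborhood $(\mathcal{U}_{[0,s]})^{-1}$ of $(\alpha_{[0,s]})^{-1}$, I apply well-targetedness of $(\alpha_{[0,s]})^{-1}$ in $(\px)_{\alpha(s)}$ to obtain $V_0$ about $\alpha(0)$ so that each $a\in V_0$ is the endpoint of some $\gamma_0'\in(\mathcal{U}_{[0,s]})^{-1}$ starting at $\alpha(s)$; reversing back gives $\gamma_0=(\gamma_0')^{-1}\in\mathcal{U}_{[0,s]}$ running from $a$ to $\alpha(s)$. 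The three segments $\gamma_0,\alpha_{[s,t]},\gamma_1$ match at the junctions $\alpha(s)$ and $\alpha(t)$, and the factor-by-factor check places their concatenation $\beta$ in $\mathcal{U}$ with $\beta(0)=a$, $\beta(1)=b$, so $\alpha$ is {\we}.

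Parts (3) and (4) are lighter. For (3), a neighborhood $\mathcal{U}$ of $\alpha^{-1}$ has reverse $\mathcal{U}^{-1}$ a neighborhood of $\alpha$; well-endedness of $\alpha$ produces $V_0,V_1$, and since $\alpha^{-1}$ interchanges the two endpoints, swapping the roles of $V_0,V_1$ together with reversing the produced paths supplies the required paths in $\mathcal{U}$. For (4), I decompose a neighborhood of $\alpha\ast\beta$ at $\tfrac{1}{2}$ into factors controlling the $\alpha$- and $\beta$-halves (note $(\alpha\ast\beta)_{[0,1/2]}=\alpha$ and $(\alpha\ast\beta)_{[1/2,1]}=\beta$), apply well-endedness of $\alpha$ and of $\beta$ while fixing the common value $\alpha(1)=\beta(0)$ as the junction, and glue the two outer segments. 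In each of (1)--(4) the local version is then obtained by choosing the produced segments ($\gamma$ in (1); $\gamma_0',\gamma_1$ in (2); $\beta^{-1}$ in (3); $\alpha'$ and $\beta'$ in (4)) to be {\wt} or {\we}, whereupon the corresponding non-local statement just proved shows the assembled path is itself {\wt} or {\we}. The only genuine work throughout is the bookkeeping of this neighborhood calculus, and I expect the reversal step in (2) to be the main obstacle to state cleanly.
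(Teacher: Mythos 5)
Your overall strategy coincides with the paper's: restrict a basic neighborhood to the pieces of a subdivision, use well-targetedness of the end segment(s) to produce replacement segments, glue, and verify membership factor by factor via the decomposition $\mathcal{U}=\bigcap_i(\mathcal{U}_{[t_{i-1},t_i]})^{[t_{i-1},t_i]}$; and for the local versions, take the produced segments to be well-targeted/well-ended and feed the assembled path back into the already-proved non-local statement. That bootstrap is exactly the paper's move in part (1) (``take $\gamma$ \dots to be well-targeted; since $\beta_{[t,1]}=\gamma$ is well-targeted, $\beta$ itself is well-targeted''), and your treatments of (2), (3), (4) agree with the paper's terse indications ((2) by the same argument as (1), (3) from $\delta^{-1}\in\mathcal{V}$ iff $\delta\in\mathcal{V}^{-1}$, (4) from (2)); your direct gluing for (4) is just an unwinding of the reduction to (2).

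The one genuine omission is the degenerate case $t=1$ in (1) (and correspondingly $s=0$ or $t=1$ in (2)), which the statement explicitly permits. There your recipe ``$\beta_{[0,t]}=\alpha_{[0,t]}$ and $\beta_{[t,1]}=\gamma$'' is meaningless: $[1,1]$ is a single point, $\alpha_{[1,1]}$ is by convention the constant path $c_{\alpha(1)}$, and there is no subinterval left on which to insert $\gamma$. The case has real content (well-targetedness of $c_{\alpha(1)}$ is a weak local path-connectedness condition at $\alpha(1)$, from which one must still manufacture paths in $\mathcal{U}$ issuing from $\alpha(0)$), and the paper devotes a substantial portion of its proof of (1) to it: pass to $\mathcal{V}=\bigcap_{j=1}^{n}\langle K_{n}^{j},U_j\rangle\subseteq\mathcal{U}$, take $\mathcal{V}_{\{1\}}=\langle I,U_n\rangle$, choose $V\subseteq U_n$, and define $\beta$ by compressing the tail of $\alpha$ (namely $\beta_{[0,\frac{n-1}{n}]}=\alpha_{[0,\frac{n-1}{n}]}$, $\beta_{K_{2n}^{2n-1}}=\alpha_{K_{n}^{n}}$) so that $\gamma$ can occupy the final subinterval, $\beta_{K_{2n}^{2n}}=\gamma$; membership in $\mathcal{V}$ survives because both the compressed tail and $\gamma$ stay inside $U_n$. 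Once this reparametrization device is added (and its analogue for the degenerate endpoints in (2)), your argument is complete and matches the paper's.
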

\begin{proof}
1. If $\alpha_{[t,1]}$ is {\wt} and $\mathcal{U}$ is an open neighborhood of $\alpha$, we find a neighborhood $\mathcal{V}$ of $\alpha$ of the form $\mathcal{V}=\bigcap_{j=1}^{n}\langle K_{n}^{j},U_j\rangle$ contained in $\mathcal{U}$. Now $\mathcal{V}_{[t,1]}$ is an open neighborhood of $\alpha_{[t,1]}$. Note that if $t=1$, $\alpha_{[t,1]}$ is constant at $\alpha(1)$, we may take $\mathcal{V}_{[t,1]}=\langle I,U_n\rangle$. By assumption, there is an open neighborhood $V$ of $\alpha(1)$ contained in $U_n$ such that for each $v\in V$ there is a path $\gamma\in \mathcal{V}_{[t,1]}$ from $\alpha(t)$ to $v$. When $t\neq 1$, the path $\beta$ satisfying $\beta_{[0,t]}=\alpha_{[0,t]}$ and $\beta_{[t,1]}=\gamma$ is the desired path in $\mathcal{V}$ (and thus in $\mathcal{U}$) from $\alpha(0)$ to $v$. When $t=1$, let $\beta$ be the path satisfying $\beta_{\left[0,\frac{n-1}{n}\right]}=\alpha_{\left[0,\frac{n-1}{n}\right]}$, $\beta_{K_{2n}^{2n-1}}=\alpha_{K_{n}^{n}}$, and $\beta_{K_{2n}^{2n}}=\gamma$. Now $\beta$ is a path in $\mathcal{V}$ from $\alpha(0)$ to $v$. In the case that $\alpha_{[t,1]}$ is {\et} and $t\neq 1$, we take $\gamma$ in the previous argument to be \wt. Since $\beta\in \mathcal{U}$ is such that $\beta_{[t,1]}=\gamma$ is \wt, $\beta$ itself is {\wt} and $\alpha$ is \et. Similarly, when $t\neq 1$, we have $s=\frac{2n-1}{2n}<1$ such that $\beta_{[0,s]}=\gamma$ is \wt. Thus $\beta$ is {\wt} and $\alpha$ is \et. 2. follows from the same type of argument used in 1. and 3. follows from the fact that $\delta^{-1}\in\mathcal{V}$ if and only if $\delta\in \mathcal{V}^{-1}$. 4. follows directly from 2.
\end{proof}
The following Corollary \ref{lwtclasses} allows us to replace any path in a {\estc} space by a homotopic (rel. endpoints) {\et} path.
\begin{corollary} \label{lwtclasses}
Let $X$ be {\stc} (resp. \estc) and $x_1,x_2\in X$. For each class $[\alpha]\in \pi X(x_1,x_2)$, there is a {\wt} (resp. \et) path $\beta\in [\alpha]$.
\end{corollary}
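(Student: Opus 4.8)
The plan is to leave the class $[\alpha]$ untouched while arranging that a final segment of the representing path is already {\wt}, so that Lemma \ref{lemmalwelwt}(1) finishes the job. First I would invoke the characterization recorded in the remark following Definition \ref{localpathconn2}: since $X$ is {\stc} (resp.\ {\estc}), for the fixed initial point $x_1$ there is a {\wt} (resp.\ {\et}) path $\gamma$ from $x_1$ to $x_2$. This $\gamma$ will play the role of the ``tail'' of the path I build.

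Next I would set $\beta=\alpha\ast\gamma^{-1}\ast\gamma$, the three-fold concatenation (which is composable because $\alpha(1)=x_2=\gamma^{-1}(0)$ and $\gamma^{-1}(1)=x_1=\gamma(0)$). Since $\gamma^{-1}\ast\gamma$ is a loop at $x_2$ that is nullhomotopic rel endpoints, $\beta$ is homotopic rel endpoints to $\alpha\ast c_{x_2}\simeq\alpha$; hence $\beta\in[\alpha]$, i.e.\ $[\beta]=[\alpha]\in\pi X(x_1,x_2)$. Note that although the reverse $\gamma^{-1}$ need not be {\wt}, this causes no trouble, because the construction only requires the very last segment to be well-targeted.

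Finally, by the concatenation convention $\beta_{K_3^3}=\beta_{[2/3,1]}=\gamma$, which is {\wt} (resp.\ {\et}) by choice of $\gamma$. Lemma \ref{lemmalwelwt}(1), applied with $t=2/3$, then yields that $\beta$ itself is {\wt} (resp.\ {\et}), giving the desired representative of $[\alpha]$. The one point that genuinely needs the device above—and which I expect to be the only subtlety—is that one cannot simply replace $\alpha$ by a well-behaved path joining $x_1$ to $x_2$, since such a path need not lie in $[\alpha]$; inserting the cancelling factor $\gamma^{-1}\ast\gamma$ is precisely what reconciles fixing the tail with preserving the homotopy class, and everything else reduces to the tail criterion of Lemma \ref{lemmalwelwt}.
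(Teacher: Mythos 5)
Your proof is correct and is essentially identical to the paper's: the paper also takes a {\wt} (resp.\ {\et}) path $\gamma$ from $x_1$ to $x_2$, forms $\beta=\alpha\ast\gamma^{-1}\ast\gamma$, and applies part 1 of Lemma \ref{lemmalwelwt} to the tail $\gamma$. Your write-up just makes explicit the details (the value $t=2/3$, why $[\beta]=[\alpha]$, and why $\gamma^{-1}$ failing to be {\wt} is harmless) that the paper leaves to the reader.
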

\begin{proof}
If $X$ is {\stc} (resp. \estc), there is a {\wt} (resp. \et) path $\gamma$ from $x_1$ to $x_2$. Let $\beta=\alpha\ast\gamma^{-1}\ast\gamma$. Clearly $\alpha\simeq \beta$ and 1. of Lemma \ref{lemmalwelwt} implies that $\beta$ is {\wt} (resp. \et). 
\end{proof}
The next two statements are motivated by the desire to lift properties of a space to its semicoverings.
\begin{proposition} \label{lwegeneralizedcoverings}
Let $\pyx$ be a semicovering map such that $p(y_0)=x_0$. If $\alpha\in \pxxo$ is (locally) well-targeted, then so is the lift $\tilde{\alpha}_{y_0}$.
\end{proposition}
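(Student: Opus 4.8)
The plan is to transport the well-targetedness of $\alpha$ at its endpoint across the lifting homeomorphism $L_p=(\mathcal{P}p)^{-1}\colon\pxxo\ra\pyyo$, the one delicate point being that when we lift a path ending near $x_1:=\alpha(1)$ we must pin down the endpoint of the lift \emph{exactly}, not merely up to the fiber $p^{-1}(x_1)$. Write $y_1=\tilde{\alpha}_{y_0}(1)$, so $p(y_1)=x_1$. Since $p$ is a local homeomorphism, I would first fix an open neighborhood $N$ of $y_1$ in $Y$ that is carried homeomorphically by $p$ onto $p(N)$; the key feature we will exploit is that $p|_N$ is injective.

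Given an arbitrary open neighborhood $\mathcal{U}$ of $\tilde{\alpha}_{y_0}$ in $\pyyo$, I would shrink it to $\mathcal{V}=\mathcal{U}\cap\langle\{1\},N\rangle$. As $y_1\in N$, this $\mathcal{V}$ is still an open neighborhood of $\tilde{\alpha}_{y_0}$, and by construction every path in $\mathcal{V}$ terminates inside $N$. Because $p$ has continuous lifting of paths, $\mathcal{P}p\colon\pyyo\ra\pxxo$ is a homeomorphism, so $\mathcal{V}'=\mathcal{P}p(\mathcal{V})$ is an open neighborhood of $\alpha=\mathcal{P}p(\tilde{\alpha}_{y_0})$ in $\pxxo$. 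Applying the hypothesis that $\alpha$ is well-targeted to the neighborhood $\mathcal{V}'$ then yields an open neighborhood $V_1$ of $x_1$ such that each $b\in V_1$ is the endpoint of some $\beta\in\mathcal{V}'$.

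I would then set $W_1=N\cap p^{-1}(V_1)$, an open neighborhood of $y_1$, and check it witnesses well-targetedness of $\tilde{\alpha}_{y_0}$ for $\mathcal{U}$. Given $c\in W_1$ we have $p(c)\in V_1$, so there is $\beta\in\mathcal{V}'$ with $\beta(1)=p(c)$; its lift $\tilde{\beta}=L_p(\beta)$ lies in $\mathcal{V}\subseteq\mathcal{U}$. The slice constraint forces $\tilde{\beta}(1)\in N$, while $p(\tilde{\beta}(1))=\beta(1)=p(c)$; since $c$ and $\tilde{\beta}(1)$ both lie in $N$, where $p$ is injective, we conclude $\tilde{\beta}(1)=c$. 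This is exactly the step I expect to be the main obstacle: the neighborhood $N$ is what converts the mere fiber coincidence $p(\tilde{\beta}(1))=p(c)$ into the equality $\tilde{\beta}(1)=c$, and without pre-shrinking $\mathcal{U}$ into $\langle\{1\},N\rangle$ the argument would only produce a lift ending somewhere in the fiber over $p(c)$. This disposes of the well-targeted case.

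For the locally well-targeted case I would run the identical construction, this time invoking local well-targetedness of $\alpha$ to select each $\beta\in\mathcal{V}'$ to be \emph{well-targeted}. Its lift $\tilde{\beta}=L_p(\beta)$ is then the lift starting at $y_0$ of a well-targeted path starting at $x_0$, hence is itself well-targeted by the well-targeted case just established; and exactly as above $\tilde{\beta}\in\mathcal{U}$ with $\tilde{\beta}(1)=c$. Thus every $c\in W_1$ is the endpoint of a well-targeted path in $\mathcal{U}$, so $\tilde{\alpha}_{y_0}$ is locally well-targeted, completing the proposal.
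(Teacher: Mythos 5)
Your proposal is correct and follows essentially the same route as the paper's proof: pre-shrink the given neighborhood by intersecting with $\langle\{1\},N\rangle$ for a slice neighborhood $N$ of $\tilde{\alpha}_{y_0}(1)$, push forward through the homeomorphism $\mathcal{P}p$, apply (local) well-targetedness of $\alpha$, and use injectivity of $p$ on $N$ to pin down the endpoints of the lifted paths exactly. The paper likewise handles the locally well-targeted case by rerunning the argument with well-targeted witnesses $\beta$ and invoking the already-established well-targeted case for their lifts.
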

\begin{proof}
Suppose $\alpha$ is {\wt} and let $\mathcal{W}$ be an open neighborhood of $\tilde{\alpha}_{y_0}$ in $\pyyo$. Since $p$ is a local homeomorphism, there is an open neighborhood $U$ of $\tilde{\alpha}_{y_0}(1)$ mapped homeomorphically by $p$ onto an open subset of $X$. Let $\mathcal{U}=\mathcal{W}\cap \langle \{1\},U\rangle$. Since $\mathcal{P}p\colon\pyyo\ra \pxxo$ is a homeomorphism, $\mathcal{V}=\mathcal{P}p\left(\mathcal{U}\right)$ is an open neighborhood of $\alpha$. By assumption, there is an open neighborhood $V$ of $\alpha(1)$ (which we may take to be contained in $p(U)$) such that for each $v\in V$ there is a path $\gamma\in \mathcal{V}$ from $x_0$ to $v$. Now $W=p^{-1}(V)\cap U$ is a homeomorphic copy of $V$ in $U$. If $w\in W$, then $p(w)\in V$ and there is a path $\gamma\in \mathcal{V}$ from $x_0$ to $p(w)$. Since $L_p\colon\mathcal{V}\cong \mathcal{U}$, the lift $\tilde{\gamma}_{y_0}$ of $\gamma$ lies in $\mathcal{U}$. Since $p\circ \tilde{\gamma}_{y_0}(1)=p(w)$ and $\tilde{\gamma}_{y_0}(1)\in p^{-1}(p(w))\cap U=\{w\}$, we have $\tilde{\gamma}_{y_0}(1)=w$. Since we have already shown that lifts of {\wt} paths are {\wt}, the {\et} case follows from the same argument and taking $\gamma$ to be \wt.
\end{proof}
\begin{corollary} \label{liftingproperties}
Let $\pyx$ be a semicovering map. If $X$ is locally path connected, then so is $Y$. If $X$ is {\stc} (resp. \estc), then so is every path component of $Y$.
\end{corollary}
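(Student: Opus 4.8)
The statement splits into two independent assertions, which I would handle separately. For the claim about local path connectedness, the plan is to observe that this property is inherited along any local homeomorphism. Each $y\in Y$ has an open neighborhood $U$ carried homeomorphically by $p$ onto the open set $p(U)\subseteq X$. Since open subspaces of a locally path connected space are again locally path connected, $p(U)$, and hence its homeomorphic copy $U$, has a basis of path connected open sets, and these are open in $Y$. Letting $y$ range over all of $Y$ produces a basis for $Y$ consisting of path connected open sets, so $Y$ is locally path connected. This part is routine and requires no lifting machinery.

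The substance is in the second assertion. Let $Y_0$ be a path component of $Y$ and fix $y_1,y_2\in Y_0$; I must produce a {\we} (resp.\ {\ee}) path from $y_1$ to $y_2$. First choose any path $\eta$ in $Y$ from $y_1$ to $y_2$, write $\eta=\eta_1\ast\eta_2$ with $\eta_1=\eta_{[0,1/2]}$ and $\eta_2=\eta_{[1/2,1]}$, and set $m=\eta(1/2)$, $x_i=p(y_i)$. The idea is to replace each half by a {\wt} (resp.\ {\et}) path with the same endpoints and then assemble a {\we} (resp.\ {\ee}) path via part 2 of Lemma \ref{lemmalwelwt}. Applying Corollary \ref{lwtclasses} in $X$ to the classes $[p\circ\eta_2]\in\pi X(p(m),x_2)$ and $[p\circ\eta_1^{-1}]\in\pi X(p(m),x_1)$ gives {\wt} (resp.\ {\et}) paths $\beta_2$ and $\beta_1$ homotopic rel.\ endpoints to $p\circ\eta_2$ and $p\circ\eta_1^{-1}$. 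Lifting each from $m$ yields paths $\tilde{\beta}_2$ and $\tilde{\beta}_1$ that are {\wt} (resp.\ {\et}) by Proposition \ref{lwegeneralizedcoverings}.

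The crucial point is that the lifted halves have the correct endpoints. Since the right action of $\pi X$ on $Y$ factors through homotopy classes (Lemma \ref{contliftingprops}), and since the unique lift of $p\circ\eta_2$ starting at $m$ is $\eta_2$ while that of $p\circ\eta_1^{-1}$ is $\eta_1^{-1}$, I would compute $\tilde{\beta}_2(1)=m\cdot[\beta_2]=m\cdot[p\circ\eta_2]=\eta_2(1)=y_2$ and $\tilde{\beta}_1(1)=m\cdot[p\circ\eta_1^{-1}]=\eta_1^{-1}(1)=y_1$. Hence $\beta=\tilde{\beta}_1^{-1}\ast\tilde{\beta}_2$ is a genuine path from $y_1$ to $y_2$ whose final half $\beta_{[1/2,1]}=\tilde{\beta}_2$ is {\wt} (resp.\ {\et}) and the reverse of whose initial half $(\beta_{[0,1/2]})^{-1}=\tilde{\beta}_1$ is {\wt} (resp.\ {\et}); part 2 of Lemma \ref{lemmalwelwt} (with $s=t=1/2$) then gives that $\beta$ is {\we} (resp.\ {\ee}), completing the proof. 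I expect the main obstacle to be precisely this endpoint bookkeeping: because a {\wt} path constrains only its terminal behavior, one cannot simply lift a single {\wt} path joining $x_1$ to $x_2$, as its lift from $y_1$ need not terminate at $y_2$. This forces the two-sided construction above, with unique homotopy lifting used to pin down both endpoints simultaneously.
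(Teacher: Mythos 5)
Your proof is correct: the local homeomorphism argument for the first claim matches the paper's (which just calls it clear), the endpoint computations $\tilde{\beta}_2(1)=m\cdot[p\circ\eta_2]=y_2$ and $\tilde{\beta}_1(1)=m\cdot[p\circ\eta_1^{-1}]=y_1$ are justified by Lemma \ref{contliftingprops}, and Lemma \ref{lemmalwelwt}(2) with $s=t=1/2$ applies exactly as you say. However, your route differs structurally from the paper's. The paper fixes $y_0$ over $x_0$ and, for each $y$ in the path component of $y_0$, performs a \emph{single} replacement-and-lift: it takes a path $\tilde{\gamma}_{y_0}$ from $y_0$ to $y$, uses Corollary \ref{lwtclasses} to replace $\gamma=p\circ\tilde{\gamma}_{y_0}$ by a homotopic well-targeted (resp. locally well-targeted) path $\alpha$, lifts the homotopy to conclude $\tilde{\alpha}_{y_0}(1)=y$, and invokes Proposition \ref{lwegeneralizedcoverings}. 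It then stops: no well-ended path is ever constructed, because the paper appeals to the characterization recorded in the Remark after Definition \ref{localpathconn2} (citing \cite{Br10.2}) that a space is wep-connected if and only if every point is the target of a well-targeted path from one fixed basepoint. You instead verify the definition of wep-connectedness verbatim, producing a well-ended (resp. locally well-ended) path between an arbitrary pair $y_1,y_2$ by the two-sided midpoint splitting. What your argument buys is self-containedness --- it uses only lemmas proved in the paper rather than the external fact from \cite{Br10.2} --- at the cost of doubling the lifting bookkeeping; the paper's one-sided version is shorter precisely because the fixed-basepoint criterion makes well-targeted lifts sufficient. Note also that your closing worry is resolved in the paper by the same device you use: a single well-targeted path can safely be lifted provided it is chosen, via Corollary \ref{lwtclasses}, inside the homotopy class of the projection of an actual path in $Y$, so that homotopy lifting pins its endpoint. (Both arguments, yours and the paper's, also implicitly use that a well-ended or well-targeted path in $Y$ lying in a path component is well-ended or well-targeted as a path in that component; this is harmless since all comparison paths emanate from points of the component and hence stay in it.)
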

\begin{proof}
The locally path connected case is clear since $p$ is a local homeomorphism. For the other two cases, let $p(y_0)=x_0$ and show the path component of $y_0$ in $Y$ is {\stc} (resp. \estc). Suppose $y\in Y$, $p(y)=x$, and $\tilde{\gamma}_{y_0}$ be a path from $y_0$ to $y$ so that $\gamma=p\circ \tilde{\gamma}_{y_0}$ is a path from $x_0$ to $x$. By Corollary \ref{lwtclasses}, there is a {\wt} (resp. \et) path $\alpha$ from $x_0$ to $x$ homotopic to $\gamma$. This homotopy lifts to a homotopy of paths $\tilde{\gamma}_{y_0}\simeq \tilde{\alpha}_{y_0}$. In particular, $\tilde{\alpha}_{y_0}(1)=\tilde{\gamma}_{y_0}(1)=y$ and $\tilde{\alpha}_{y_0}$ is {\wt} (resp. \et) by Proposition \ref{lwegeneralizedcoverings}.
\end{proof}
\section{Classification Theorems}
Our main classification of semicoverings is the following theorem.
\begin{theorem} \label{classificationI}
Let $X$ be a locally wep-connected space. Monodromy \[\mathscr{M}\colon\scovx\ra \mathbf{TopFunc}(\pi^{\tau}X,\set)\] is an isomorphism of categories.
\end{theorem}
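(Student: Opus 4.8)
The plan is to exhibit an explicit inverse functor $\Theta\colon\mathbf{TopFunc}(\pi^{\tau}X,\set)\ra\scovx$ and to verify that $\mathscr{M}$ and $\Theta$ are mutually inverse with both composites equal to the identity on the nose, so that we obtain an isomorphism (not merely an equivalence) of categories. On objects I would send an enriched functor $F$ to the map $\pf\colon\txf\ra X$ whose total space is the set $\txf=\coprod_{x\in X}F(x)=\{(x,e)\mid e\in F(x)\}$ with $\pf(x,e)=x$, imitating the classical recovery of a covering from its monodromy. The topology on $\txf$ is forced by the requirement that $\pf$ lift paths continuously: the lift of $\alpha\in\pxxo$ beginning at $(x_0,e)$ must be $t\mapsto (\alpha(t),F([\alpha_{[0,t]}])(e))$, whose terminal point $(\alpha(1),F([\alpha])(e))$ is then the monodromy action. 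Accordingly I would topologize $\txf$ by declaring, for each $(x_0,e)$, the images of the assignments $\pxxo\ra\txf$, $\beta\mapsto(\beta(1),F([\beta])(e))$ to generate the open sets, and exhibit a genuine basis from these; on morphisms, a $\spaces$-natural transformation $\eta\colon F\ra F'$ is sent to $(x,e)\mapsto(x,\eta_x(e))$.

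The crux is to prove that $\pf$ is a semicovering and that $\mathscr{M}\pf=F$. The latter holds by construction once the lift formula is justified, since the action $(x_0,e)\cdot[\alpha]=(\alpha(1),F([\alpha])(e))$ is exactly $F$ on morphisms and $\pf^{-1}(x)=F(x)$ on objects. To see $\pf$ is a local homeomorphism I would use local wep-connectedness: near $(x_0,e)$ choose a {\wt} path and a small neighborhood $\mathcal{U}$ of it in $\pxxo$; because $ev_1$ is quotient on wep-connected path components (Proposition \ref{quotienteval}) and the labeling $\beta\mapsto F([\beta])(e)$ is locally constant, the section $w\mapsto(w,F([\beta_w])(e))$ is a well-defined homeomorphism from a neighborhood of $x_0$ onto a neighborhood of $(x_0,e)$. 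The input that makes this labeling locally constant is the continuity of $F\colon\pi^{\tau}X(x_1,x_2)\ra\set(F(x_1),F(x_2))$ into the pointwise-convergence topology together with the continuity of $h\colon\px(x_1,x_2)\ra\pi^{\tau}X(x_1,x_2)$; here the passage from $\pi^{qtop}$ to the finer group topology of $\pi^{\tau}$ is precisely what guarantees enough open sets for the lift assignment to be continuous. Continuous lifting of paths and homotopies then follows by checking $L_{\pf}$ on the standard basic neighborhoods of $\px$, transporting $F$-labels along restricted subpaths by the same local constancy; the homotopy case is analogous, using $(\Phi X)_{x_0}$ and barycentric subdivision exactly as in the proof that coverings are semicoverings.

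For the reverse composite $\Theta\circ\mathscr{M}=\mathrm{id}$, note that a semicovering $\pyx$ has $\mathscr{M}p(x)=p^{-1}(x)$, so there is a canonical bijection $\kappa\colon\tilde{X}_{\mathscr{M}p}=\coprod_{x}p^{-1}(x)\ra Y$, $(x,y)\mapsto y$, lying over $X$. I would show $\kappa$ is continuous: it carries the canonical path-lifts in $\tilde{X}_{\mathscr{M}p}$ to the $p$-lifts in $Y$, so continuity follows from Lemma \ref{generalliftinglemma} applied on each path component of $Y$, which is locally wep-connected by Corollary \ref{liftingproperties} and hence satisfies the evaluation-quotient hypothesis by Proposition \ref{quotienteval}. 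Since $\kappa$ is then a morphism of semicoverings over $X$ with $p\circ\kappa=p_{\mathscr{M}p}$, the two-out-of-three property (Corollary \ref{twoofthree}) forces $\kappa$ itself to be a semicovering; a bijective local homeomorphism is a homeomorphism, so $\kappa$ is an isomorphism of semicoverings, and these identifications are natural. Functoriality of $\Theta$ and the compatibility of $\mathscr{M}$ with morphisms are then routine verifications.

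The main obstacle I anticipate is the construction and analysis of the topology on $\txf$: one must produce a genuine basis, prove $\pf$ is a local homeomorphism, and prove that the lifting homeomorphisms $L_{\pf}$ and $\Phi\pf$ are continuous. Each rests on the interplay between the compact-open topology on $\px$, the quotient topologies defining $\pi^{\tau}X$, and the enrichment-continuity of $F$ over $\pi^{\tau}X$; it is exactly here that local wep-connectedness (to control $ev_1$) and the choice of the $\tau$-topology (to supply local constancy of the monodromy labels) are indispensable, and where the argument most departs from the locally path connected classical case.
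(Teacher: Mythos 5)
Your construction of $\txf$ and the overall architecture (build the total space from the fibers $F(x)$, topologize it via the endpoint-of-lift assignments, prove $\pf$ is a local homeomorphism using local wep-connectedness and local constancy of $\beta\mapsto F([\beta])(e)$, then identify $Y\cong\tilde{X}_{\mathscr{M}p}$ via Lemma \ref{generalliftinglemma}) is essentially the paper's, and most of what you outline can be carried through. But there is a genuine gap at the decisive step: you never prove that $\pf$ has \emph{unique} path lifting. You write that the lift of $\alpha$ starting at $(x_0,e)$ ``must be'' $t\mapsto(\alpha(t),F([\alpha_{[0,t]}])(e))$, and your verification of ``continuous lifting of paths and homotopies'' only checks that this canonical lift assignment is continuous, i.e.\ that $\mathcal{P}\pf$ admits a continuous section; this is the content of \ref{descriptionoflifts}, \ref{descriptionofhomotopylifts} and Theorem \ref{contliftingofpf} in the paper. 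Continuous lifting, however, demands that $\mathcal{P}\pf\colon\left(\mathcal{P}\txf\right)_{y}\to(\mathcal{P}X)_{x}$ be a \emph{homeomorphism}, in particular injective: an arbitrary continuous path in $\txf$ projecting to $\alpha$ and starting at $(x_0,e)$ must coincide with the canonical lift. This is not automatic, because $\txf$ is in general not Hausdorff, and a local homeomorphism out of a non-Hausdorff space need not have unique path lifting (the line with doubled origin over $\mathbb{R}$ is the standard counterexample: the agreement set of two lifts is open but need not be closed). Local constancy of the monodromy labels gives you openness of the agreement locus, not closedness.

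This missing step is exactly the paper's Theorem \ref{upl}, which is the technical heart of the existence half: writing two lifts as $f(t)=y_0\cdot[\alpha_t]$ and $g(t)=y_0\cdot[\beta_t]$ with $\alpha_t,\beta_t$ locally well-targeted, one runs a greatest-lower-bound argument on the set $\{t\mid y_0\cdot[\alpha_t]=y_0\cdot[\beta_t]\}$, using the basic neighborhoods $B(y_0\cdot[\alpha_z],\mathcal{A}_z,U_z)$ of Lemma \ref{basisoftxf} together with the fact that the stabilizer $\mathcal{H}(y_0)$ and its cosets $\mathcal{H}(y_0)\ell_z$ are \emph{open} in $\pi_{1}^{\tau}(X,x_0)$; the two cases $z\in A$ and $z\notin A$ each yield a contradiction. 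This is where the enrichment over $\pi^{\tau}$ does work beyond the local constancy you invoke, which concerns only fixed-endpoint path spaces. Without this (or an equivalent argument, such as the Fischer--Zastrow method the paper adapts), your $\pf$ is only shown to be a local homeomorphism admitting continuous sections of $\mathcal{P}\pf$ and $\Phi\pf$, which does not meet Definition \ref{semicoveringdef}. A secondary, fixable imprecision: the paper defines the topology of $\txf$ as the quotient topology with respect to $\tf$, and the continuity of the canonical lifts (via $\mathcal{P}\tf$ and $\Phi\tf$) depends on $\tf$ being continuous; if you instead let images of open sets ``generate'' the topology, you may obtain a strictly finer topology for which those lift maps fail to be continuous, so you should take the quotient (final) topology and then prove, as in Lemma \ref{basisoftxf}, that the sets $B(y_0\cdot[\alpha],\mathcal{U},V)$ form a basis.
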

The main difficulty in the proof of this theorem is the existence of a semicovering whose monodromy is a given $\spaces$-functor $F\colon\pi^{\tau}X\ra \set$. This is the content of section 5.1. Section 5.2 completes the proof of Theorem \ref{classificationI} and sections 5.3 and 5.4 offer alternative classifications in terms of open covering morphisms and continuous actions of topological groups on discrete sets. Under the usual conditions, this classification reduces to the well-known classification of covering spaces.
\begin{corollary}
If $X$ is path connected, locally path connected, and semilocally 1-connected, then $\covx=\scovx$ and $\ccovx=\cscovx$.
\end{corollary}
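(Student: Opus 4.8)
The plan is to show that, under the stated hypotheses, every semicovering of $X$ is in fact a covering; the categorical conclusion then follows formally. We have already established that $\covx$ (resp. $\ccovx$) is a full subcategory of $\scovx$ (resp. $\cscovx$), so the morphism sets already agree; once we verify that the two classes of objects coincide, fullness forces $\covx=\scovx$ and $\ccovx=\cscovx$ (the connected case being obtained by restricting to path-connected total spaces). So let $\pyx$ be a semicovering and fix $x_0\in X$; it suffices to produce an evenly covered neighborhood of $x_0$. Using local path connectedness together with semilocal $1$-connectivity, first choose a path-connected open neighborhood $U$ of $x_0$ for which the inclusion $\iota$ induces the trivial homomorphism $\pi_1(U,x_0)\to\pi_1(X,x_0)$.

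Next I would, for each $y\in p^{-1}(x_0)$, construct a continuous section $s_y\colon U\ra Y$ of $p$ over $U$ with $s_y(x_0)=y$ by appealing to Lemma \ref{generalliftinglemma}. To apply it with $W=U$ and $f=\iota$, note that $U$ is path connected and locally path connected, hence wep-connected, so Proposition \ref{quotienteval} gives that $ev_1\colon(\mathcal{P}U)_w\ra U$ is quotient for every $w$, while the path components of $U$ are trivially open. The required lift $\Psi\colon\pi U\ra \pi Y$ of $\pi\iota$ through the covering morphism $\pi p$ exists by the groupoid lifting criterion \cite{Brown06}: $\pi U$ is connected and $\pi\iota$ sends the vertex group $\pi_1(U,x_0)$ trivially, hence into the image of $\pi p$, and we may prescribe $\Psi(x_0)=y$. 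Lemma \ref{generalliftinglemma} then yields the continuous map $s_y=Ob(\Psi)$, and $\pi p\circ\Psi=\pi\iota$ gives $p\circ s_y=\iota$, so $s_y$ is a section. For any path $\gamma$ in $U$ from $x_0$ to $u$, uniqueness of lifting identifies $s_y\circ\gamma$ with the lift $\widetilde{\gamma}_{y}$, so $s_y(u)$ is exactly the endpoint of the lift starting at $y$ of any path in $U$ ending at $u$.

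The heart of the argument is to assemble these sections into an even covering by showing $p^{-1}(U)=\coprod_{y\in p^{-1}(x_0)}s_y(U)$ with each $s_y(U)$ open and carried homeomorphically onto $U$. Surjectivity onto $p^{-1}(U)$ and pairwise disjointness both follow from unique path lifting: given $z$ over $u\in U$, lifting a path from $u$ to $x_0$ starting at $z$ produces the unique $y$ with $z=s_y(u)$. Each restriction $p|_{s_y(U)}$ is a continuous bijection onto $U$ with continuous inverse $s_y$, hence a homeomorphism. The step I expect to be the main obstacle is the \emph{openness} of each slice $s_y(U)$; here I would invoke the local homeomorphism property of $p$: given $z=s_y(u)$, pick a neighborhood $N$ of $z$ mapped homeomorphically onto an open set, shrink via local path connectedness to a path-connected $U'\subseteq U\cap p(N)$ about $u$, and set $N'=(p|_N)^{-1}(U')$. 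Lifting paths in $U'$ from $u$ through $(p|_N)^{-1}$ and comparing with $s_y$ by uniqueness of lifts shows $N'\subseteq s_y(U)$, so the slice is open. Thus $U$ is evenly covered; as $x_0$ was arbitrary, $p$ is a covering, which identifies the classes of objects and hence the categories.
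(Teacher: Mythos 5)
Your proposal is correct, but it takes a genuinely different route from the paper's. The paper deduces the corollary from its classification machinery: for such $X$ the group $\pitx$ is discrete by \cite{Br10.2}, so $\pi^{\tau}X$ is a discrete groupoid and $\mathbf{TopFunc}(\pi^{\tau}X,\set)=\mathbf{Func}(\pi X,\set)$; combining Theorem \ref{classificationI} with the classical isomorphism $\covx\cong\mathbf{Func}(\pi X,\set)$ yields $\scovx\cong\covx$, and the proof finishes by observing that a semicovering equivalent to a covering must itself be a covering. You instead prove directly, at the point-set level, that every semicovering of such an $X$ is a covering: you build sections $s_y$ over a path-connected open $U$ with $\pi_1(U,x_0)\to\pi_1(X,x_0)$ trivial, using the groupoid lifting criterion from \cite{Brown06} together with Lemma \ref{generalliftinglemma} (whose hypotheses you correctly verify --- $U$ is wep-connected since it is path connected and locally path connected, so Proposition \ref{quotienteval} applies), and then assemble the slices $s_y(U)$ into an even covering via unique path lifting and the local homeomorphism property; the openness argument for the slices is the one delicate step and you handle it correctly. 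Your approach buys self-containedness: it avoids Theorem \ref{classificationI} (the bulk of the paper) and the discreteness result quoted from \cite{Br10.2}, it proves the equality of object classes outright rather than only up to equivalence (so the paper's closing ``equivalent implies equal'' remark is not needed), and it gives the concrete statement that any neighborhood whose fundamental group includes trivially into $\pi_1(X,x_0)$ is evenly covered by any semicovering. What it does not provide is the conceptual explanation in the paper's argument: that this corollary is precisely the special case of the general classification in which the enrichment is discrete and therefore invisible.
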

\begin{proof}
For any such $X$ and $x_0\in X$, $\pi^{\tau}(X,x_0)$ is discrete \cite{Br10.2}. Thus $\pi^{\tau}X$ is a discrete groupoid. This gives the middle equality in: \[\scovx\cong\mathbf{TopFunc}(\pi^{\tau}X,\set)=\mathbf{Func}(\pi X,\set)\cong \covx.\]Any semicovering of $X$ equivalent to a covering of $X$ must itself be a covering.
\end{proof}
It remains to be seen whether or not there are more general conditions giving $\covx=\scovx$.
\subsection{Existence of semicoverings}
Let $X$ be a path connected space and $F\colon\pi X\ra \set$ be any functor such that each $F(x)$ is non-empty. Let $\txf=\bigcup_{x\in X}F(x)$ and $\pf\colon\txf\ra X$ be the surjection given by $\pf(F(x))=x$. Note that $F$ determines the right action of $\pi X$ on $\txf$ given by $y\cdot [\alpha]=F([\alpha])(y)\in F(\alpha(1))$ for $\alpha\in \pxx$ and $y\in F(x)$. Let \[\tf\colon\coprod_{x\in X}F(x)\times (\px)_{x} \ra \txf \] be the function given by $\tf(y,\alpha)=y\cdot [\alpha]$ for $(y,\alpha)\in F(x)\times  (\px)_{x}$. We view each fiber $F(x)$ as a discrete space and give $\txf$ the quotient topology with respect to $\tf$. Consider the diagram \[\xymatrix{ \coprod_{x\in X}F(x)\times (\px)_{x} \ar[dr]_-{ev} \ar[r]^-{\tf} & \txf \ar@{-->}[d]^-{\pf}\\ & X }\] where $ev$ is the continuous evaluation $ev(y,\alpha)=\alpha(1)$. Since $\alpha(1)=\beta(1)$ whenever $y\in F(\alpha(0))$ and $z\in F(\beta(0))$ and $y\cdot [\alpha]=z\cdot [\beta]$, $\pf$ is continuous by the universal property of quotient spaces. Since the topology of $\txf$ is characterized by the quotient map $\tf$, we sometimes write a generic element of $\txf$ as $\tf(y,\alpha)=y\cdot[\alpha]$.
\begin{remark} \label{thetadata}\emph{
Given $x_1,x_2\in X$ and $y_i\in F(x_i)$, let $\mathcal{H}(y_1,y_2)=\{[\alpha]\in \pi X(x_1,x_2)|y_1\cdot[\alpha]=y_2\}$. Of course, if $y_1=y=y_2$ and $x=p(y)$, $\mathcal{H}(y)=\mathcal{H}(y,y)$ is the stabilizer subgroup of $\pi_{1}(X,x)$ at $y$. Note that $y\cdot [\alpha]=y\cdot [\beta]$ if and only if $[\alpha\ast\beta^{-1}]\in \mathcal{H}(y)$. Additionally, we note that by Corollary \ref{lwtclasses}, if $X$ is (locally) wep-connected, then for each $y\cdot [\alpha]\in \txf$ there is a (locally) well-targeted path $\beta\in \px(\alpha(0),\alpha(1))$ such that $y\cdot [\alpha]=y\cdot [\beta]$.}
\end{remark}
\begin{proposition} \label{phopen}
If $X$ is \stc, then $\pf\colon\txf\ra X$ is open.
\end{proposition}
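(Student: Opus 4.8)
The plan is to verify openness directly from the definition. Given an open set $O\subseteq\txf$, I want to show $\pf(O)$ is open, so I fix a point $x\in\pf(O)$ and aim to produce an open neighborhood $V$ of $x$ with $V\subseteq\pf(O)$. Choose $\eta\in O$ with $\pf(\eta)=x$. Since $\tf$ is surjective, I may write $\eta=y\cdot[\alpha]$ with $y\in F(\alpha(0))$; as $\eta\in F(\alpha(1))$ this forces $\alpha(1)=x$. Because $X$ is wep-connected, Remark \ref{thetadata} lets me replace $\alpha$ by a well-targeted path $\beta\in\px(\alpha(0),x)$ with $\eta=y\cdot[\beta]$. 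Write $x_0=\beta(0)$, so that $y\in F(x_0)$ and $\beta\in(\px)_{x_0}$.

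The key step is to extract an open neighborhood of $\beta$ from the quotient topology on $\txf$. Since $\txf$ carries the quotient topology with respect to $\tf$, the preimage $\tf^{-1}(O)$ is open in $\coprod_{x'\in X}F(x')\times(\px)_{x'}$. Restricting to the summand indexed by $x_0$ and using that $F(x_0)$ is discrete, the slice
\[
\mathcal{U}=\{\gamma\in(\px)_{x_0}\mid y\cdot[\gamma]\in O\}
\]
is open in $(\px)_{x_0}$, and $\beta\in\mathcal{U}$ because $y\cdot[\beta]=\eta\in O$.

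Now I invoke the defining property of a well-targeted path. Applied to the neighborhood $\mathcal{U}$ of $\beta$, it produces an open neighborhood $V$ of $\beta(1)=x$ such that each $b\in V$ is the endpoint $\gamma(1)=b$ of some $\gamma\in\mathcal{U}$. For every such $b$, the definition of $\mathcal{U}$ gives $y\cdot[\gamma]\in O$, while $\pf(y\cdot[\gamma])=\gamma(1)=b$; hence $b\in\pf(O)$. This shows $V\subseteq\pf(O)$, and since $x\in\pf(O)$ was arbitrary, $\pf(O)$ is open, as desired.

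The only real subtlety—the part to handle carefully rather than the part that is genuinely hard—is the interplay in the middle step: correctly unwinding the quotient topology (together with discreteness of the fibers) to obtain the path-space neighborhood $\mathcal{U}$, and ensuring beforehand that $\eta$ is represented by a \emph{well-targeted} path $\beta$ so that the well-targeted definition is even applicable. This latter point is precisely what the wep-connectedness hypothesis supplies through Remark \ref{thetadata}; once $\mathcal{U}$ and $\beta$ are in hand, the conclusion is a direct translation of the well-targeted property into openness of the image.
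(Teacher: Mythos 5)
Your proof is correct and is essentially the same as the paper's: both replace the representing path by a well-targeted one via Remark \ref{thetadata} (Corollary \ref{lwtclasses}), extract an open path-space neighborhood $\mathcal{U}$ from the quotient topology by slicing $\tf^{-1}(O)$ over the point $y$, and then translate the well-targeted property of $\beta$ directly into an open neighborhood $V\subseteq\pf(O)$ of the endpoint.
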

\begin{proof}
Let $W$ be open in $\txf$, $x_1\in \pf(W)$, and pick any $(y,\alpha)$ such that $y\cdot [\alpha]\in W$ and $\alpha(1)=x_1$. Let $x_0=\alpha(0)$ so that $y\in F(x_0)$. By Corollary \ref{lwtclasses}, there is a well-targeted path $\beta$ homotopic to $\alpha$ rel. endpoints. By Remark \ref{thetadata},  it is clear that $y\cdot [\beta]=y\cdot [\alpha]\in W$. Now $\{y\}\times \mathcal{U}=\tf^{-1}(W)\cap \left(\{y\}\times (\px)_{x_0}\right)$ for some open neighborhood $\mathcal{U}$ of $\beta$ in $(\px)_{x_0}$. Since $\beta$ is well-targeted, there is an open neighborhood $V$ of $x_1$ in $X$ such that for every $v\in V$, there is a path $\delta\in \mathcal{U}$ such that $\delta(1)=v$. Thus $y\cdot [\delta]\in W$ and $\pf(y\cdot [\delta])=\delta(1)=v$. This gives the inclusion $V\subseteq \pf(W)$.
\end{proof}
\begin{proposition} \label{morphisms}
If $F,G\colon\pi^{\tau}X\ra \set$ are $\spaces$-functors and $\eta\colon F\ra G$ is a $\spaces$-natural transformation, then there is a map $p_{F,G}\colon\txf\ra \tilde{X}_{G}$ such that the triangle \[\xymatrix{ \txf \ar[dr]_{\pf} \ar[rr]^{p_{F,G}} && \tilde{X}_{G} \ar[dl]^{p_G} \\ & X }\] commutes.
\end{proposition}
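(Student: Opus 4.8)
The plan is to let $p_{F,G}$ be the map assembled from the components of $\eta$ and to verify its continuity through the defining quotient maps $\tf$ and $\Theta_{G}$. Recall that the natural transformation $\eta$ consists of functions $\eta_{x}\colon F(x)\ra G(x)$, one for each $x\in X$, satisfying the naturality condition $\eta_{\alpha(1)}(y\cdot [\alpha])=\eta_{\alpha(0)}(y)\cdot [\alpha]$ for every $[\alpha]\in \pi X$ and $y\in F(\alpha(0))$, where the two actions are those of $\pi X$ on $\txf$ and $\tilde{X}_{G}$ respectively. First I would define $p_{F,G}\colon\txf\ra \tilde{X}_{G}$ on underlying sets by declaring $p_{F,G}|_{F(x)}=\eta_{x}$. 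Since $F(x)$ sits over $x$ under $\pf$ while $G(x)$ sits over $x$ under $p_{G}$, the triangle commutes on the level of underlying sets immediately; the entire content of the proposition is therefore the continuity of $p_{F,G}$.

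To establish continuity I would exploit that $\txf$ carries the quotient topology determined by $\tf$. Because each $F(x)$ and $G(x)$ is discrete, every component $\eta_{x}$ is automatically continuous, so the coproduct map
\[\Xi=\coprod_{x\in X}(\eta_{x}\times id)\colon \coprod_{x\in X}F(x)\times (\px)_{x}\ra \coprod_{x\in X}G(x)\times (\px)_{x},\qquad (y,\alpha)\mapsto (\eta_{\alpha(0)}(y),\alpha)\]
is continuous. Post-composing with the continuous quotient map $\Theta_{G}$ then yields a continuous map $\Theta_{G}\circ \Xi$ into $\tilde{X}_{G}$.

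The key step is the identity $p_{F,G}\circ \tf=\Theta_{G}\circ \Xi$. Evaluating both sides at $(y,\alpha)\in F(x)\times (\px)_{x}$, the left-hand side is $p_{F,G}(y\cdot [\alpha])=\eta_{\alpha(1)}(y\cdot [\alpha])$, since $y\cdot [\alpha]\in F(\alpha(1))$, while the right-hand side is $\Theta_{G}(\eta_{x}(y),\alpha)=\eta_{\alpha(0)}(y)\cdot [\alpha]$; these coincide precisely by the naturality of $\eta$. With this identity in hand, $p_{F,G}\circ \tf$ is continuous, and since $\tf$ is a quotient map the universal property of quotient spaces forces $p_{F,G}$ to be continuous. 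I do not anticipate a genuine obstacle here: the only subtle point is the displayed identity, and that is exactly where the hypothesis that $\eta$ is a natural transformation, and not merely an arbitrary fiberwise family of functions, is indispensable.
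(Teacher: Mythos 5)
Your proof is correct and follows essentially the same route as the paper's: both obtain continuity by factoring through the quotient map $\Theta_{F}$, using discreteness of the fibers to make $\coprod_{x}\eta_{x}\times id$ continuous and the naturality of $\eta$ to supply the key compatibility identity. The only cosmetic difference is that you define $p_{F,G}$ fiberwise and then verify it intertwines the quotient maps, whereas the paper defines it by the formula $p_{F,G}(\Theta_{F}(y,\alpha))=\Theta_{G}(\eta_{\alpha(0)}(y),\alpha)$ and checks well-definedness; the underlying naturality computation is identical.
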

\begin{proof}
The top horizontal map in the diagram \[\xymatrix{ \coprod_{x\in X}F(x)\times (\px)_{x} \ar[rr]^-{\coprod_{x\in X}\eta_{x}\times id} \ar[d]_{\tf} && \coprod_{x\in X}G(x)\times (\px)_{x} \ar[d]^{\Theta_{G}} \\ \txf \ar[dr]_{\pf} \ar@{-->}[rr]^{p_{F,G}} && \tilde{X}_{G} \ar[dl]^{p_G} \\ & X }\] given by the components of $\eta$ is continuous since each $F(x)$ is discrete. We claim $p_{F,G}$ is given by $p_{F,G}(\Theta_{F}(y,\alpha))=\Theta_{G}(\eta_{\alpha(0)}(y),\alpha)$ to make the triangle commute. To check that $p_{F,G}$ is well-defined, it suffices to show that $G([\alpha])(\eta_{\alpha(0)}(y))=G([\beta])(\eta_{\beta(0)}(z))$ whenever $F([\alpha])(y)=F([\beta])(z)$. However, if $F([\alpha])(y)=F([\beta])(z)$, then $\alpha(1)=\beta(1)$ and the naturality of $\eta$ gives that \[G([\alpha])(\eta_{\alpha(0)}(y))=\eta_{\alpha(1)}(F([\alpha])(y))=\eta_{\beta(1)}(F([\beta])(z))=G([\beta])(\eta_{\beta(0)}(z)).\] Since $\tf$ is quotient, $p_{F,G}$ is continuous. The last statement follows directly from Proposition \ref{twoofthree}.
\end{proof}
%
%
\begin{pathlifting} \label{descriptionoflifts}\emph{
Without any assumptions on $X$ or $F$, we find canonical lifts of paths with respect to $p_F$. For $y\in F(x)$ we construct a canonical section to \[\mathcal{P}p_F\colon\left(\mathcal{P}\txf\right)_{y}\ra (\mathcal{P}X)_{x}\]as a composition of continuous functions. Since multiplication $\mu\colon I\times I\ra I$ of real numbers is continuous, $\mu^{\#}\colon(\px)_{x}\to \spaces((I\times I,\{0\}\times I\cup I\times\{0\}),(X,\{x\}))$, $\beta\mapsto \beta\circ \mu$ on the relative mapping space is continuous. Additionally, \[r\colon\spaces((I\times I,\{0\}\times I\cup I\times\{0\}),(X,\{x\}))\ra \left(\mathcal{P}(\px)_{x}\right)_{c_{x}} \text{ , }r(\phi)(s)(t)=\phi(s,t)\] is a homeomorphism. Note that $r(\beta\circ \mu)(s)(t)=\beta(st)$ and therefore $r(\beta\circ \mu)(s)=\beta_{[0,s]}$. Lastly, the map $\mathcal{P}\tf\colon\left(\mathcal{P}\pxx \right)_{c_{x}}\ra \left(\mathcal{P}\txf\right)_{c_{y}}$ is obtained by applying $\mathcal{P}$ to the restriction of $\tf$ to $\{y\}\times \pxx$. Now let $L_F\colon(\px)_{x}\ra\left(\mathcal{P}\txf\right)_{y}$ be the composition $\mathcal{P}\tf\circ r \circ \mu^{\#}$ which takes $\beta$ to the path $\tilde{\beta}_{y}(s)=y\cdot\left[\beta_{[0,s]}\right]$. Since $p_F\left(y\cdot\left[\beta_{[0,s]}\right]\right)=\beta(s)$, $\tilde{\beta}_{y}$ is a lift of $\beta$ starting at $y$. Therefore, $L_F$ is the desired section.}
\end{pathlifting}
\begin{homotopylifting} \label{descriptionofhomotopylifts}\emph{
We take a similar approach to find canonical lifts of homotopies of paths by constructing a section of\[\Phi\pf\colon\left(\Phi\txf\right)_{y}\to (\Phi X)_{x}.\] for $y\in F(x)$. Since multiplication $m\colon I\times \Delta_2\ra \Delta_2$, $m(s,t,u)=(st,u)$ is continuous, $m^{\#}\colon(\Phi X)_{x}\to \spaces((I\times \Delta_2,I\times e_1\cup \Delta_2\times \{0\}),(X,x))$, $m^{\#}(\phi)(s,t,u)=\phi(st,u)$ is continuous. Additionally, the map \[r\colon\spaces((I\times \Delta_2,I\times e_1\cup \Delta_2\times \{0\}),(X,x))\ra \left(\Phi(\mathcal{P}X)_{x}\right)_{c_{x}}\] given by $r(K)(t,u)(s)=K(s,t,u)$ is a homeomorphism. Note that $\left(r(\phi\circ m)(x,y)\right)(s)=\phi(sx,y)$. Additionally, $\Phi\tf\colon\left(\Phi\pxx \right)_{c_{x}}\ra \left(\Phi\txf\right)_{y}$ is obtained by applying $\Phi$ to the restriction of $\tf$ to $\{y\}\times \pxx$ and $hL_F\colon(\Phi X)_{x}\ra \left(\Phi\txf\right)_{y}$ is the composition $\Phi\tf\circ  r\circ m^{\#}$. To see that $hL_F$ is a section of $\Phi\pf$ we check that $\pf(hL_F(\phi)(t,u))=\phi(t,u)$ for $(t,u)\in \Delta_2$. This is straightforward from the equation \[\pf(\tf(r(\phi\circ m)(t,u)))= \left(r(\phi\circ m)(t,u)\right)(1)=\phi(1t,u)=\phi(t,u)\]}
\end{homotopylifting}
\begin{theorem} \label{contliftingofpf}
The map $\pf\colon\txf\ra X$ has continuous lifting of paths and homotopies if and only if $\pf$ has unique path lifting.
\end{theorem}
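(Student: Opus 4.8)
The plan is to treat the two directions separately, with essentially all the work in the reverse implication. The forward direction is immediate: if $\pf$ has continuous lifting of paths then, by definition, $\mathcal{P}\pf\colon(\mathcal{P}\txf)_y\ra(\mathcal{P}X)_{\pf(y)}$ is a homeomorphism for each $y$, hence injective, and injectivity of $\mathcal{P}\pf$ is exactly unique path lifting (as already noted just after the definition of continuous lifting of paths).

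For the reverse direction the guiding observation is that the constructions \ref{descriptionoflifts} and \ref{descriptionofhomotopylifts} already furnish \emph{continuous} sections $L_F$ and $hL_F$ of $\mathcal{P}\pf$ and $\Phi\pf$ respectively, with no hypotheses on $X$ or $F$. Thus both $\mathcal{P}\pf$ and $\Phi\pf$ are automatically continuous surjections: continuous by functoriality of $\mathcal{P}$ and $\Phi$, and surjective because they admit sections. The whole problem therefore reduces to injectivity, since a continuous surjection that admits a continuous section and happens to be injective is a homeomorphism, the section being forced to be a two-sided inverse. Concretely, for $\gamma\in(\mathcal{P}\txf)_y$ the elements $L_F(\mathcal{P}\pf(\gamma))$ and $\gamma$ have the same image under $\mathcal{P}\pf$, so injectivity gives $L_F\circ\mathcal{P}\pf=\mathrm{id}$, exhibiting $L_F$ as the continuous inverse. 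The same formal argument applies verbatim to $\Phi\pf$ and $hL_F$.

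The path case is then finished at once, since the hypothesis of unique path lifting is by definition the injectivity of $\mathcal{P}\pf$ for each $y$. The main obstacle is the homotopy case, where I must deduce injectivity of $\Phi\pf$ from unique path lifting alone. My plan here is to slice the simplex. Recall that $(\Phi\txf)_y$ consists of maps $G\colon\Delta_2\ra\txf$ collapsing the edge $e_1=\{(0,u)\}$ to $y$. Given $G_1,G_2\in(\Phi\txf)_y$ with $\pf\circ G_1=\pf\circ G_2$, fix a point $(s_0,u_0)\in\Delta_2$ and consider the horizontal path $\sigma(r)=(rs_0,u_0)$, which lies in $\Delta_2$ (since $rs_0+u_0\le s_0+u_0\le 1$) and runs from $(0,u_0)\in e_1$ to $(s_0,u_0)$. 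Then $G_1\circ\sigma$ and $G_2\circ\sigma$ are two lifts of the common path $(\pf\circ G_1)\circ\sigma$ in $X$, both starting at $y$; unique path lifting forces $G_1\circ\sigma=G_2\circ\sigma$, and evaluating at $r=1$ gives $G_1(s_0,u_0)=G_2(s_0,u_0)$. As $(s_0,u_0)$ ranges over $\Delta_2$ these horizontal segments sweep out the whole simplex, so $G_1=G_2$ and $\Phi\pf$ is injective.

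Combining the pieces, unique path lifting yields injectivity of $\mathcal{P}\pf$ directly and of $\Phi\pf$ via the slicing argument; together with the sections $L_F$ and $hL_F$ this shows both $\mathcal{P}\pf$ and $\Phi\pf$ are homeomorphisms for every $y$, which is precisely continuous lifting of paths and homotopies. The only genuinely nontrivial point is the reduction of homotopy-lift uniqueness to path-lift uniqueness, and I expect the verification that the chosen family of segments truly covers $\Delta_2$ while beginning on $e_1$ to be the one step deserving care.
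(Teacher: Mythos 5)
Your proof is correct and takes essentially the same route as the paper: both use the canonical continuous sections $L_F$ and $hL_F$ constructed in \ref{descriptionoflifts} and \ref{descriptionofhomotopylifts} to exhibit $\mathcal{P}\pf$ and $\Phi\pf$ as retractions, so that everything reduces to injectivity, which unique path lifting supplies. Your simplex-slicing argument simply fills in the one step the paper leaves implicit (that unique path lifting forces injectivity of $\Phi\pf$), and it is the standard argument for doing so.
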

\begin{proof}
According to \ref{descriptionoflifts} and \ref{descriptionofhomotopylifts}, for any $y\in F(x)$ both \[\mathcal{P}p_F\colon\left(\mathcal{P}\txf\right)_{y}\ra (\mathcal{P}X)_{x}\text{ and }\Phi p_F\colon\left(\Phi \txf\right)_{y}\ra (\Phi X)_{x}\] are topological retractions. Unique path lifting implies that both of these maps are injective and therefore homeomorphisms.
\end{proof}
From now on, suppose $X$ is locally wep-connected and $F\colon\pi^{\tau}X\ra \set$ is a $\spaces$-functor. We require these assumptions to obtain a simple basis for the topology of $\txf$. Since each map $\pi^{\tau}X(x_1,x_2)\ra \set(F(x_1),F(x_2))$ is continuous so is each adjoint action map $F(x_1)\times \pi^{\tau}X(x_1,x_2)\ra F(x_2)$, $(y,[\alpha])\mapsto y\cdot [\alpha]$. Thus if $y_i\in F(x_i)$, the set $\mathcal{H}(y_1,y_2)$ is open in $\pi^{\tau}X(x_1,x_2)$. Moreover, since $h\colon\px(x_1,x_2)\ra \pi^{\tau}X(x_1,x_2)$, $\alpha\mapsto [\alpha]$ is continuous, the pre-image $h^{-1}\left(\mathcal{H}(y_1,y_2)\right)$ is open in $\px(x_1,x_2)$.
\begin{basis}\emph{
Let $\alpha\in \pxxo$, $y_0\in F(x_0)$ and $U$ be an open neighborhood of $y_0\cdot[\alpha]$ in $\txf$. By Remark \ref{thetadata}, we may assume that $\alpha$ is locally well-targeted. Since $\alpha\ast\alpha^{-1}$ is a null-homotopic loop, $h^{-1}(\mathcal{H}(y_0))$ is an open neighborhood of $\alpha\ast\alpha^{-1}$ in $\Omega(X,x_0)$. Now find a neighborhood $\mathcal{U}=\bigcap_{i=1}^{m}\langle K_{m}^{i},A_i\rangle$ of $\alpha$ in $\pxxo$ such that
\begin{enumerate}
\item $\{y_0\}\times \mathcal{U}\subseteq \tf^{-1}(U)$
\item $\alpha\ast\alpha^{-1}\in\mathcal{U}\mathcal{U}^{-1}\cap \Omega(X,x_0)\subseteq h^{-1}(\mathcal{H}(y_0))$.
\end{enumerate}
Since $\alpha$ is locally well-targeted, there is an open neighborhood $V$ of $\alpha(1)$ contained in $A_m$ such that for each $v\in V$, there is a well-targeted path $\delta\in \mathcal{U}$ from $x$ to $v$. Let \[B(y_0\cdot[\alpha],\mathcal{U},V)=\tf\left(\{y_0\}\times \left(\mathcal{U}\cap \langle \{1\},V\rangle\right)\right).\]}
\end{basis}
\begin{lemma} \label{basisoftxf}
The sets $B(y_0\cdot[\alpha],\mathcal{U},V)$ form a basis for the topology of $\txf$. Moreover, $B(y_0\cdot[\alpha],\mathcal{U},V)$ is mapped homeomorphically onto $V$ by $\pf$.
\end{lemma}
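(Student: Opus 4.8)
The plan is to prove the ``moreover'' clause first, since the homeomorphism $\pf|_{B}\to V$ is the engine driving the basis claim, and then to attack openness of $B$ by a fibre‑reduction. Throughout write $x_0=\alpha(0)$, keep the data $\mathcal{U}=\bigcap_{i=1}^{m}\langle K_m^i,A_i\rangle$ and $V$ from the construction, and set $\mathcal{W}=\mathcal{U}\cap\langle\{1\},V\rangle$, so $B=\tf(\{y_0\}\times\mathcal{W})$. The first thing I would extract from condition (2) is a fibrewise rigidity: if $\delta_1,\delta_2\in\mathcal{U}$ have $\delta_1(1)=\delta_2(1)$, then $\delta_1\ast\delta_2^{-1}\in\mathcal{U}\mathcal{U}^{-1}\cap\Omega(X,x_0)\subseteq h^{-1}(\mathcal{H}(y_0))$, so $y_0\cdot[\delta_1]=y_0\cdot[\delta_2]$. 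Hence $y_0\cdot[\delta]$ depends only on $\delta(1)$ for $\delta\in\mathcal{U}$, and $\phi(v)=y_0\cdot[\delta]$ (any $\delta\in\mathcal{U}$ with $\delta(1)=v$) is a well‑defined map $V\to\txf$, total on $V$ because $\alpha$ is \et{} so each $v\in V$ is an endpoint of some $\delta\in\mathcal{U}$. Since $\pf(y_0\cdot[\delta])=\delta(1)$, the maps $\pf|_{B}$ and $\phi$ are mutually inverse bijections $B\leftrightarrow V$. For continuity of $\phi$ at $v$ I would pick a \wt{} path $\delta\in\mathcal{U}$ with $\delta(1)=v$, take the open set $\mathcal{O}=\tf(y_0,-)^{-1}(U')\ni\delta$ (using that $\tf(y_0,-)=ev_1\circ L_F$ is continuous, \ref{descriptionoflifts}), and apply well‑targetedness of $\delta$ to $\mathcal{O}\cap\mathcal{U}$ to get $V'\ni v$ with every $v'\in V'$ hit by some $\delta'\in\mathcal{O}\cap\mathcal{U}$; then $\phi(v')=y_0\cdot[\delta']\in U'$. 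This yields that $\pf|_{B}\colon B\to V$ is a homeomorphism, and notably it does not require $B$ to be open.

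For the basis claim the easy half is that every open $W\subseteq\txf$ is a union of such sets: for $\xi=y_0\cdot[\alpha]\in W$, the construction preceding the lemma applied with $U=W$ produces $B(\xi,\mathcal{U},V)$ with $\xi\in B\subseteq W$. The real content is that each $B$ is open, i.e. that $\tf^{-1}(B)$ is open in $\coprod_{x\in X}F(x)\times\pxx$. Here I would reduce all fibres to a single one. Fix $(z,\beta)\in\tf^{-1}(B)$ with $z\in F(x_1)$, choose $\delta\in\mathcal{U}$ with $\delta(1)=\beta(1)$ and $z\cdot[\beta]=y_0\cdot[\delta]$, and put $\mu=\delta\ast\beta^{-1}$, a path $x_0\to x_1$ with $y_0\cdot[\mu]=z$. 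For every $\beta'\in\pxx$ one then has the identity $\tf(z,\beta')=y_0\cdot[\mu\ast\beta']=\tf(y_0,\mu\ast\beta')$, so the slice of $\tf^{-1}(B)$ over $\{z\}$ equals $\Lambda^{-1}(S)$, where $\Lambda(\beta')=\mu\ast\beta'$ is continuous (concatenation) and $S=\tf(y_0,-)^{-1}(B)\subseteq\pxxo$ is the single ``base slice.'' Thus it suffices to show $S$ is open.

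Proving $S$ open is the main obstacle, and it is precisely where local wep‑connectedness must be used \emph{transversally}: $S$ consists of the $\sigma\in\pxxo$ with $\sigma(1)\in V$ and $y_0\cdot[\sigma]=\phi(\sigma(1))$, and the difficulty is that nearby paths need not be joined by small homotopies, so the fibrewise identity of the previous paragraph cannot be propagated to neighbouring endpoints by bare continuity. My plan is to fix $\sigma_0\in S$, choose a \wt{} path $\delta_0\in\mathcal{U}$ with $\delta_0(1)=\sigma_0(1)$, observe $[\sigma_0\ast\delta_0^{-1}]\in\mathcal{H}(y_0)$, and exploit the openness of the sets $\mathcal{H}(y_1,y_2)$ in the hom‑spaces of $\pi^{\tau}X$ (established via continuity of the action and discreteness of the fibres) by selecting a product neighbourhood $\mathcal{A}\mathcal{B}\subseteq h^{-1}(\mathcal{H}(y_0))$ of $\sigma_0\ast\delta_0^{-1}$. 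Passing this through the joint continuity of concatenation, and using the well‑targetedness of $\delta_0$ to keep control as the endpoint $\sigma(1)$ varies, should produce an open neighbourhood of $\sigma_0$ on which $y_0\cdot[\sigma]=\phi(\sigma(1))$, hence contained in $S$. Once $S$ is open, the reduction shows $\tf^{-1}(B)$ is open, so $B$ is open and the sets $B(y_0\cdot[\alpha],\mathcal{U},V)$ form a basis.
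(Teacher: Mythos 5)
Your proposal is correct, but it is organized quite differently from the paper's proof, and the comparison is instructive. For the ``moreover'' clause, the paper does not build an inverse: it first proves $B$ is open, then combines bijectivity of $\pf|_{B}$ (exactly your rigidity argument from condition (2)) with openness of $\pf$ (Proposition \ref{phopen}) to get the homeomorphism; your explicit continuous inverse $\phi$, with continuity extracted from well-targetedness and continuity of $\tf(y_0,-)$, bypasses both Proposition \ref{phopen} and the openness of $B$, so the two halves of the lemma become logically independent in your version. For openness of $B$, the paper works directly at an arbitrary point $(y_1,\beta)$ of an arbitrary summand $\{y_1\}\times(\mathcal{P}X)_{x_1}$: it uses that the transporter set $\mathcal{H}(y_0,y_1)$ is open in $\pi^{\tau}X(x_0,x_1)$ (available because $F$ is a $\spaces$-functor) to choose neighborhoods $\mathcal{D}\subseteq\mathcal{U}$ of $\delta$ and $\mathcal{B}$ of $\beta$ with $\mathcal{D}\mathcal{B}^{-1}\cap\mathcal{P}X(x_0,x_1)\subseteq h^{-1}(\mathcal{H}(y_0,y_1))$, and then applies well-targetedness of $\delta$ to produce the neighborhood $\{y_1\}\times\bigl(\mathcal{B}\cap\langle\{1\},W\rangle\bigr)$ inside $\tf^{-1}(B)$. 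Your translation trick $\Lambda(\beta')=\mu\ast\beta'$ collapses all fibers onto the single base slice $S=\tf(y_0,-)^{-1}(B)$, so that only openness of the vertex-group stabilizer $\mathcal{H}(y_0)$ is ever needed --- a formally weaker input than openness of all the $\mathcal{H}(y_0,y_1)$ --- at the cost of one extra continuity argument (concatenation by $\mu$). Your final step for openness of $S$ is stated as a plan, but it assembles correctly: choose $\mathcal{A}\ni\sigma_0$ and $\mathcal{B}\ni\delta_0^{-1}$ with $\mathcal{A}\mathcal{B}\cap\Omega(X,x_0)\subseteq h^{-1}(\mathcal{H}(y_0))$, apply well-targetedness of $\delta_0$ to $\mathcal{B}^{-1}\cap\mathcal{U}$ to get $W$, and check that $\mathcal{A}\cap\langle\{1\},W\cap V\rangle\subseteq S$; this is precisely the same compact-open manipulation the paper performs, just executed once at the base fiber instead of at every fiber.
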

\begin{proof}
Since $U$ is arbitrary and $B(y_0\cdot[\alpha],\mathcal{U},V)\subseteq U$, it suffices to show that $B(y_0\cdot[\alpha],\mathcal{U},V)$ is open in $\txf$. Since $\tf$ is quotient, we check that $\tf^{-1}\left(B(y_0\cdot[\alpha],\mathcal{U},V)\right)$ is open in $\coprod_{x\in X}F(x)\times (\px)_{x}$. If \[(y_1,\beta)\in \tf^{-1}\left(B(y_0\cdot[\alpha],\mathcal{U},V)\right)\cap \left(\{y_1\}\times (\px)_{x_1}\right),\] then $y_1\cdot[\beta]=y_0\cdot[\epsilon]$ (Recall that this implies $\beta(1)=\epsilon(1)$) for $\epsilon\in \mathcal{U}\cap \langle \{1\},V\rangle$. By assumption, there is a well-targeted path $\delta\in \mathcal{U}$ such that $\delta(1)=\epsilon(1)$. Since $\delta\ast\epsilon^{-1}\in\mathcal{U}\mathcal{U}^{-1}\cap \Omega(X,x_0)\subseteq h^{-1}(\mathcal{H}(y_0))$, we have $y_0\cdot[\delta]=y_0\cdot[\epsilon]=y_1\cdot[\beta]$. Thus $[\delta\ast\beta^{-1}]\in \mathcal{H}(y_0,y_1)$ and $h^{-1}(\mathcal{H}(y_0,y_1))$ is an open neighborhood of $\delta\ast\beta^{-1}$ in $\px(x_0,x_1)$. This observation guarantees that there are open neighborhoods $\mathcal{B}=\bigcap_{j=1}^{n}\langle K_{n}^{j},B_j\rangle$ of $\beta$ in $(\px)_{x_1}$ and $\mathcal{D}=\bigcap_{k=1}^{p}\langle K_{p}^{k},D_k\rangle$ of $\delta$ in $(\px)_{x_0}$ such that
\begin{enumerate}
\item $\mathcal{D}\subseteq \mathcal{U}$
\item $\delta\ast\beta^{-1}\in \mathcal{D}\mathcal{B}^{-1}\cap \px(x_0,x_1)\subseteq h^{-1}(\mathcal{H}(y_0,y_1))$
\item $B_n\cup D_p\subseteq V$.
\end{enumerate}
Since $\delta$ is well-targeted, there is an open neighborhood $W$ of $\delta(1)=\beta(1)$ in $B_n\cap D_p$ such that for each $w\in W$, there is a path $\zeta\in \mathcal{D}$ from $x_0$ to $w$. We claim the neighborhood $\{y_1\}\times \left(\mathcal{B}\cap \langle \{1\},W\rangle\right)$ of $(y_1,\beta)$ is contained in $\tf^{-1}\left(B(y_0\cdot[\alpha],\mathcal{U},V)\right)\cap \left(\{y_1\}\times (\px)_{x_1}\right)$. If $\gamma\in \mathcal{B}\cap \langle \{1\},W\rangle$, there is a path $\zeta\in \mathcal{D}$ from $x_0$ to $\gamma(1)$. This gives $\zeta\ast\gamma^{-1}\in \mathcal{D}\mathcal{B}^{-1}\cap \px(x_0,x_1)\subseteq h^{-1}(\mathcal{H}(y_0,y_1))$ and therefore $y_0\cdot[\zeta]=y_{1}\cdot[\gamma]$. Since $y_{1}\cdot[\gamma]=y_0\cdot[\zeta]$ for \[\zeta\in \mathcal{D}\cap \langle\{1\},W\rangle\subseteq \mathcal{U}\cap \langle \{1\},V\rangle,\]we have $y_1\cdot [\gamma]\in B(y_0\cdot[\alpha],\mathcal{U},V)$.

Since $\pf$ is open by \ref{phopen}, the restriction $B(y_0\cdot[\alpha],\mathcal{U},V)\ra V$ of $\pf$ is a homeomorphism if it is bijective. If $v\in V$, there is a path $\delta\in \mathcal{U}$ such that $\delta(1)=v$ which gives $\pf(y_0\cdot[\delta])=v$. Additionally, if $\delta,\epsilon\in \mathcal{U}\cap \langle \{1\},V\rangle$ such that $\pf(y_0\cdot [\delta])=\delta(1)=\epsilon(1)=\pf(y_0\cdot[\epsilon])$, then $\delta\ast\epsilon^{-1}\in \mathcal{U}\mathcal{U}^{-1}\cap \Omega(X,x_0)\subseteq h^{-1}(\mathcal{H}(y_0))$ and thus $y_{0}\cdot [\delta]=y_0\cdot [\epsilon]$.
\end{proof}
\begin{remark}\emph{
It is worthwhile to note that when $V$ is path connected\[B(y_0\cdot[\alpha],\mathcal{U},V)=\left\{y_0\cdot[\alpha\ast\xi]|\xi\in (\mathcal{P}V)_{\alpha(1)}\right\}.\]Thus if $X$ is locally path connected, the construction of $\txf$ agrees with the widely used construction of coverings (and generalized coverings \cite{FZ07}) of locally path connected spaces.}
\end{remark}
For each $y\in F(x)$, let $y\cdot (\pi X)_{x}=\{y\cdot[\alpha]\in\txf|\alpha\in \pxx\}$, which, by Lemma \ref{basisoftxf}, is open in $\txf$. Since $X$ is path connected, $\pxx$ is path connected and therefore $y\cdot (\pi X)_{x}$ is path connected.
\begin{proposition} \label{pathcomponents}
The path components of $\txf$ are the open sets $y\cdot (\pi X)_{x}$.
\end{proposition}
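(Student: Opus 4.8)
The plan is to exhibit the sets $y\cdot(\pi X)_{x}$ as a partition of $\txf$ into open, path connected blocks, and then to invoke the elementary fact that a partition of a space into open path connected sets is precisely its decomposition into path components. Two of the three ingredients are already available: by Lemma \ref{basisoftxf} each $y\cdot(\pi X)_{x}$ is open in $\txf$, and as observed just before the statement each is path connected, being the continuous image of the path connected space $\pxx$ under $\alpha\mapsto y\cdot[\alpha]$ (the restriction of $\tf$ to $\{y\}\times\pxx$). So the work reduces to checking that these sets cover $\txf$ and that any two of them are either equal or disjoint.

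Covering is immediate: any element of $\txf$ lies in some $F(x)$ and may be written $z=z\cdot[c_{x}]\in z\cdot(\pi X)_{x}$. For the partition property I would run the standard orbit argument, using that $\txf$ carries the \emph{right} action $y\cdot[\sigma][\tau]=(y\cdot[\sigma])\cdot[\tau]$ of the groupoid $\pi X$, with $[\sigma][\tau]=[\sigma\ast\tau]$. Suppose $y\cdot(\pi X)_{x}$ and $y'\cdot(\pi X)_{x'}$ share a point $z=y\cdot[\alpha]=y'\cdot[\beta]$, where $\alpha\in\pxx$, $\beta\in(\px)_{x'}$, and necessarily $\alpha(1)=\beta(1)$. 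Then $z\cdot[\alpha^{-1}]=y\cdot[\alpha\ast\alpha^{-1}]=y$, so for every $\gamma\in\pxx$,
\[ y\cdot[\gamma]=z\cdot[\alpha^{-1}\ast\gamma]=y'\cdot[\beta\ast\alpha^{-1}\ast\gamma]\in y'\cdot(\pi X)_{x'}. \]
Hence $y\cdot(\pi X)_{x}\subseteq y'\cdot(\pi X)_{x'}$, and by symmetry the two sets coincide.

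With the partition established, each block $B:=y\cdot(\pi X)_{x}$ is clopen, since its complement is the union of the remaining (open) blocks and is therefore open. Finally, for any point $z\in B$, a path $I\to\txf$ starting at $z$ has connected image, which must lie in the clopen set $B$; thus the path component of $z$ is contained in $B$, while conversely $B$ is path connected and so contained in the path component of $z$. Therefore $B$ is exactly the path component of $z$, which proves the claim.

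The computations here are routine; the only place demanding a little care is the orbit argument for disjointness, where one must track the basepoints of the concatenated paths $\alpha^{-1}\ast\gamma$ and $\beta\ast\alpha^{-1}\ast\gamma$ to be certain each lands in the correct star (so that the displayed equalities represent genuine applications of the $\pi X$-action). I do not anticipate any genuinely hard step, precisely because openness and path connectedness of the blocks have already been supplied by Lemma \ref{basisoftxf} and the preceding discussion.
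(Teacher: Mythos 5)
Your proof is correct and takes essentially the same approach as the paper: the core step, showing that two sets $y\cdot(\pi X)_{x}$ and $y'\cdot(\pi X)_{x'}$ sharing a point must coincide by transporting an arbitrary element along the action (your $y\cdot[\gamma]=y'\cdot[\beta\ast\alpha^{-1}\ast\gamma]$ is the paper's $y_{1}\cdot[\beta]=y_{2}\cdot[\alpha_2\ast\alpha_{1}^{-1}\ast\beta]$), is exactly the paper's argument, which likewise relies on the openness (Lemma \ref{basisoftxf}) and path connectedness established just before the statement. The pieces you spell out explicitly (the covering by orbits via $z=z\cdot[c_x]$ and the clopen-partition argument identifying the blocks as path components) are left implicit in the paper but constitute the standard completion of the same proof.
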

\begin{proof}
If $y\in y_{1}\cdot (\pi X)_{x_1}\cap y_{2}\cdot (\pi X)_{x_2}$, then $y_1\cdot[\alpha_{1}]=y=y_{2}\cdot [\alpha_2]$ where $\alpha_i(0)=x_i$ and $\alpha_{1}(1)=\alpha_{2}(1)$. We claim that $y_{1}\cdot (\pi X)_{x_1}=y_{2}\cdot (\pi X)_{x_2}$. If $y_{1}\cdot[\beta]\in y_{1}\cdot (\pi X)_{x_1}$ where $\beta(0)=x_1$, then \[y_{1}\cdot[\beta]=y_{1}\cdot[\alpha_{1}][\alpha_{1}^{-1}\ast \beta]=y_{2}\cdot [\alpha_2][\alpha_{1}^{-1}\ast \beta]=y_{2}\cdot [\alpha_2\ast\alpha_{1}^{-1}\ast \beta]\]giving $y_{1}\cdot[\beta]\in y_{2}\cdot (\pi X)_{x_2}$. The other inclusion follows similarly.
\end{proof}
The general idea of the proof of the next proposition is based on that used by Fischer and Zastrow in \cite[Prop. 6.7,6.8]{FZ07} to determine when certain maps have unique path lifting. Recall that if $F$ is a $\spaces$-functor, $y\in F(x)$, and $\alpha\in \Omega(X,x)$, the coset $\mathcal{H}(y)[\alpha]\in \mathcal{H}(y)\bs\pi_{1}^{\tau}(X,x)$ is an open neighborhood of $[\alpha]$ in $\pi_{1}^{\tau}(X,x)$ and $h^{-1}(\mathcal{H}(y)[\alpha])$ is an open neighborhood of $\alpha$ in $\Omega(X,x)$.
\begin{theorem} \label{upl}
If $X$ is locally wep-connected and $F\colon\pi^{\tau} X\ra \set$ is a $\spaces$-functor, then $\pf\colon\txf\ra X$ is a semicovering map.
\end{theorem}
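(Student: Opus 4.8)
The plan is to verify the two defining properties of a semicovering (Definition \ref{semicoveringdef}) in turn: that $\pf$ is a local homeomorphism, and that it has continuous lifting of paths and homotopies. The first is immediate from Lemma \ref{basisoftxf}: every point of $\txf$ lies in a basic set $B(y_0\cdot[\alpha],\mathcal{U},V)$ (apply the construction preceding that lemma with $U=\txf$), and each such set is carried homeomorphically onto the open set $V$ by $\pf$. Since these basic sets cover $\txf$, $\pf$ is a local homeomorphism. For the second property I would invoke Theorem \ref{contliftingofpf}, which reduces continuous lifting of paths and homotopies to the single condition that $\pf$ have unique path lifting. Thus the whole theorem comes down to proving uniqueness of path lifts.

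To prove unique path lifting, fix $x_0\in X$, $y_0\in F(x_0)$, and a path $\alpha\in\pxxo$. Existence of a lift is already provided by the canonical section of \ref{descriptionoflifts}, namely the continuous path $\tau(s)=y_0\cdot[\alpha_{[0,s]}]$. Given any lift $\sigma\colon I\ra\txf$ with $\sigma(0)=y_0$, I would show the agreement set $A=\{s\in I\mid \sigma(s)=\tau(s)\}$ is all of $I$ by a connectedness argument. Clearly $0\in A$. That $A$ is open follows from $\pf$ being a local homeomorphism: at $s_0\in A$, choose a basic sheet $B$ about the common value $\sigma(s_0)=\tau(s_0)$ mapped homeomorphically onto an open $V$; continuity of $\sigma$, $\tau$ and $\alpha$ then forces both $\sigma(s)$ and $\tau(s)$ to be the unique point of $B$ over $\alpha(s)$ for all $s$ near $s_0$, so they coincide there.

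The crux, and the one genuinely nontrivial step, is showing $A$ is closed, i.e. ruling out the non-Hausdorff ``splitting'' of lifts that makes unique path lifting fail for a general local homeomorphism (as for the line with a doubled point). Here the $\spaces$-functor hypothesis enters decisively. Since the action map $\pi^{\tau}X(x_0,x_0)\ra\set(F(x_0),F(x_0))$ is continuous and $F(x_0)$ is discrete, the orbit map $[\eta]\mapsto y_0\cdot[\eta]$ is locally constant; hence the stabilizer $\mathcal{H}(y_0)$ is clopen in $\pi_1^{\tau}(X,x_0)$ and $h^{-1}(\mathcal{H}(y_0))$ is clopen in $\Omega(X,x_0)$. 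Now take $s_n\in A$ with $s_n\to s_0$, and write $\sigma(s_0)=y_0\cdot[\gamma]$ (possible since $\sigma(s_0)$ lies in the path component $y_0\cdot(\pi X)_{x_0}$ by Proposition \ref{pathcomponents}) and $\tau(s_0)=y_0\cdot[\alpha_{[0,s_0]}]$, with $\gamma$ and $\alpha_{[0,s_0]}$ both running from $x_0$ to $\alpha(s_0)$. If $\sigma(s_0)\neq\tau(s_0)$, then $[\gamma\ast\alpha_{[0,s_0]}^{-1}]\notin\mathcal{H}(y_0)$ by Remark \ref{thetadata}, so the loop $\gamma\ast\alpha_{[0,s_0]}^{-1}$ lies in the \emph{open} complement of $h^{-1}(\mathcal{H}(y_0))$. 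By continuity of the map $(\mu,\nu)\mapsto\mu\ast\nu^{-1}$ there are neighborhoods $\mathcal{U}_1\ni\gamma$ and $\mathcal{U}_2\ni\alpha_{[0,s_0]}$ in $\pxxo$ so that $y_0\cdot[\mu]\neq y_0\cdot[\nu]$ whenever $\mu\in\mathcal{U}_1$, $\nu\in\mathcal{U}_2$ share an endpoint. Choosing the defining neighborhoods of basic sheets $B_1=B(\sigma(s_0),\mathcal{U}_1',V_1)$ and $B_2=B(\tau(s_0),\mathcal{U}_2',V_2)$ with $\mathcal{U}_1'\subseteq\mathcal{U}_1$ and $\mathcal{U}_2'\subseteq\mathcal{U}_2$ (permissible by the construction preceding Lemma \ref{basisoftxf}, where shrinking only strengthens its condition (2)), continuity of $\sigma$ and $\tau$ places the common point $w_n=\sigma(s_n)=\tau(s_n)$ into $B_1\cap B_2$ for large $n$. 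But any point of $B_1\cap B_2$ is represented both as $y_0\cdot[\mu]$ with $\mu\in\mathcal{U}_1'\subseteq\mathcal{U}_1$ and as $y_0\cdot[\nu]$ with $\nu\in\mathcal{U}_2'\subseteq\mathcal{U}_2$ and $\mu(1)=\nu(1)$, contradicting the choice of $\mathcal{U}_1,\mathcal{U}_2$. Hence $\sigma(s_0)=\tau(s_0)$ and $A$ is closed.

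By connectedness of $I$, $A=I$, so $\sigma=\tau$ and $\pf$ has unique path lifting; Theorem \ref{contliftingofpf} then upgrades this to continuous lifting of paths and homotopies, completing the proof. I expect the closedness step to be the main obstacle: it is precisely where an arbitrary functor would fail (consistent with the equivalence in Theorem \ref{contliftingofpf}), and where both the continuity built into the $\spaces$-functor $F$ (giving the clopen stabilizer) and the local wep-connectedness of $X$ (supplying the basis of Lemma \ref{basisoftxf} through well-targeted paths) are essential.
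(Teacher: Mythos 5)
Your proposal is correct and follows essentially the same route as the paper: reduction to unique path lifting via Theorem \ref{contliftingofpf} and Lemma \ref{basisoftxf}, followed by a clopen-agreement-set argument that exploits the openness of the stabilizer $\mathcal{H}(y_0)$ and its cosets in $\pi_{1}^{\tau}(X,x_0)$ together with the basic sheets of Lemma \ref{basisoftxf}; the paper merely packages the connectedness step as a greatest-lower-bound, two-case contradiction between two arbitrary lifts of the same path instead of comparing an arbitrary lift with the canonical one. The one point needing care is that the basic sheets $B_1,B_2$ must be centered on locally well-targeted representatives --- $\alpha_{[0,s_0]}$ itself need not be one --- but Corollary \ref{lwtclasses} (invoked for exactly this purpose in the paper's proof) supplies such a representative within the class $[\alpha_{[0,s_0]}]$, so this is a one-line repair that does not affect the argument.
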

\begin{proof}
By Theorem \ref{contliftingofpf} and Lemma \ref{basisoftxf}, it suffices to show that $\pf$ has unique path lifting. Let $f,g\colon I\ra \txf$ be paths such that $\pf\circ f=\pf\circ g$. We show that either $\{t\in I|f(t)=g(t)\}$ is either empty or $I$. By Proposition \ref{pathcomponents}, we may assume that $f$ and $g$ have image in $y_0\cdot (\pi X)_{x_0}$ where $y_0\in F(x_0)$. Using Corollary \ref{lwtclasses}, we have $f(t)=y_{0}\cdot [\alpha_t]$ and $g(t)=y_{0}\cdot [\beta_t]$ for locally well-targeted paths $\alpha_t,\beta_t\in \pxxo$. The condition $\pf\circ f=\pf\circ g$ means $\alpha_t(1)=\beta_t(1)$ for each $t\in I$ and thus $\alpha_{t}\ast\beta_{t}^{-1}\in \Omega(X,x_0)$. Let $\ell_t=\left[\alpha_{t}\ast\beta_{t}^{-1}\right]$ so that $h^{-1}(\mathcal{H}(y_0)\ell_t)$ is an open neighborhood of $\alpha_{t}\ast\beta_{t}^{-1}$ in $\Omega(X,x_0)$. Since $\alpha_{t}\ast\alpha_{t}^{-1}$ and $\beta_{t}\ast\beta_{t}^{-1}$ are null-homotopic, it is possible to find an open neighborhood $\mathcal{A}_{t}=\bigcap_{j=1}^{n_{t}}\langle K_{n_{t}}^{j},A_{j}^{t}\rangle$ of $\alpha_t$ and $\mathcal{B}_{t}=\bigcap_{i=1}^{m_{t}}\langle K_{m_t}^{i},B_{i}^{t}\rangle$ of $\beta_{t}$ in $\pxxo$ such that \[\left(\left(\mathcal{A}_{t}\mathcal{A}_{t}^{-1}\right)\cup \left(\mathcal{B}_{t}\mathcal{B}_{t}^{-1}\right)\right)\cap \Omega(X,x_0)\subseteq h^{-1}(\mathcal{H}(y_0))\text{  and  }\mathcal{A}_{t}\mathcal{B}_{t}^{-1}\cap \Omega(X,x_0)\subseteq h^{-1}(\mathcal{H}(y_0)\ell_{t})\]Since $\alpha_t,\beta_t$ are locally well-targeted, there is an open neighborhood $U_t\subseteq A_{n_t}^{t}$ of $\alpha_t(1)$ (resp. $V_t\subseteq B_{m_t}^{t}$ of $\beta_t(1)$) such that for each $u\in U_t$ (resp. $v\in V_t$), there is a {\wt} path $\delta\in \mathcal{A}_{t}$ with $\delta(1)=u$ (resp. $\gamma\in \mathcal{B}_{t}$ with $\gamma(1)=v$). According to Lemma \ref{basisoftxf}, for each $t\in I$,\[B(y_{0}\cdot[\alpha_t],\mathcal{A}_t,U_t)\text{  and  }B(y_{0}\cdot[\beta_t],\mathcal{B}_t,V_t)\]are open neighborhoods of $y_{0}\cdot[\alpha_t]$ and $y_{0}\cdot[\beta_t]$ in $y_0\cdot (\pi X)_{x_0}$ respectively. Suppose there are $r,s\in I$ such that $y_{0}\cdot[\alpha_r]\neq y_{0}\cdot[\beta_r]$ and $y_{0}\cdot[\alpha_s]=y_{0}\cdot[\beta_s]$. Without loss of generality, we assume $r<s$. Let $z$ be the greatest lower bound of $A=\{t\in [r,s]|y_{0}\cdot[\alpha_{t}]=y_{0}\cdot[\beta_t]\}=\{t\in [r,s]|[\alpha_{t}\ast\beta_{t}^{-1}]\in \mathcal{H}(y_0)\}$. Since $f$ and $g$ are continuous, there is an $\epsilon>0$ such that $y_{0}\cdot[\alpha_t]\in B(y_{0}\cdot[\alpha_z],\mathcal{A}_z,U_z)$ and $y_{0}\cdot[\beta_t]\in B(y_{0}\cdot[\beta_z],\mathcal{B}_z,V_z)$ for all $t\in (z-\epsilon,z+\epsilon)\cap[0,1]$. We consider two cases:

(1) If $z\in A$ (equivalently $[\alpha_z\ast\beta_{z}^{-1}]\in \mathcal{H}(y_0)$), then $r<z\leq s$ and $\mathcal{H}(y_0)\ell_z=\mathcal{H}(y_0)$. Pick any $t_0\in (r,z)\cap (z-\epsilon,z)$. We have \[y_{0}\cdot[\alpha_{t_0}]\in B(y_{0}\cdot[\alpha_z],\mathcal{A}_z,U_z)\text{ and }y_{0}\cdot[\beta_{t_0}]\in B(y_{0}\cdot[\beta_z],\mathcal{B}_z,V_z)\] and therefore $y_{0}\cdot[\alpha_{t_0}]=y_{0}\cdot[\zetaup]$ for $\zetaup\in \mathcal{A}_z$ and $y_{0}\cdot[\beta_{t_0}]=y_{0}\cdot[\etaup]$ for $\etaup\in \mathcal{B}_z$. Since $\zetaup(1)=\alpha_{t_0}(1)=\beta_{t_0}(1)=\etaup(1)$, we have \[\zetaup\ast\etaup^{-1}\in \mathcal{A}_{z}\mathcal{B}_{z}^{-1}\cap \Omega(X,x_0)\subseteq h^{-1}(\mathcal{H}(y_0)\ell_{z})=h^{-1}(\mathcal{H}(y_0))\]and $y_{0}\cdot[\alpha_{t_0}]=y_{0}\cdot[\zetaup]=y_{0}\cdot[\etaup]=y_{0}\cdot[\beta_{t_0}]$. But $t_0<z$ and $t_0\in A$ contradicting that $z$ is a lower bound for $A$.

(2) If $z\notin A$ (equivalently $[\alpha_z\ast\beta_{z}^{-1}]\notin \mathcal{H}(y_0)$), then $r\leq z< s$ and $\mathcal{H}(y_0)\ell_z \cap \mathcal{H}(y_0)=\emptyset$. Pick any $t_0\in (z,s)\cap (z,z+\epsilon)$ so that, again, $y_{0}\cdot[\alpha_{t_0}]=y_{0}\cdot[\zetaup]$ for $\zetaup\in \mathcal{A}_z$ and $y_{0}\cdot[\beta_{t_0}]=y_{0}\cdot[\etaup]$ for $\etaup\in \mathcal{B}_z$. If $y_{0}\cdot[\alpha_{t_0}]=y_{0}\cdot[\beta_{t_0}]$, then $[\zetaup\ast\etaup^{-1}]\in \mathcal{H}(y_0)$. But this cannot occur since \[\zetaup\ast\etaup^{-1}\in \mathcal{A}_{z}\mathcal{B}_{z}^{-1}\cap \Omega(X,x_0)\subseteq h^{-1}(\mathcal{H}(y_0)\ell_z)\]and $\mathcal{H}(y_0)\ell_z\cap \mathcal{H}(y_0)=\emptyset$. Thus $y_{0}\cdot[\alpha_t]\neq y_{0}\cdot[\beta_{t}]$ for each $t\in [z,s)\cap [z,z+\epsilon)$. That any $y\in (z,s)\cap (z,z+\epsilon)$ is a lower bound for $A$ which is greater than $z$ is a contradiction.
\end{proof}
\subsection{Proof of Theorem \ref{classificationI}}

To complete the proof of Theorem \ref{classificationI}, we define the inverse of monodromy $\mathscr{M}\colon\scovx\ra \mathbf{TopFunc}(\pi^{\tau}X,\set)$ (for locally wep-connected $X$) as the functor $\mathscr{S}\colon\mathbf{TopFunc}(\pi^{\tau}X,\set)\ra \scovx$ given by $\mathscr{S}(F)=\pf$ on objects and $\mathscr{S}(\eta\colon F\ra G)=p_{F,G}$ (as in Proposition \ref{morphisms}) on morphisms. Note that in the case that $F(x)=\emptyset$ for each $X$, $\mathscr{S}(F)$ is the empty semicovering $\emptyset\ra X$. It is clear that $\mathscr{M}(\mathscr{S}(F))=\mathscr{M}(\pf)=F$ and $\mathscr{M}(\mathscr{S}(\eta))=\mathscr{M}(p_{F,G})=\eta$ for a $\spaces$-natural transformation $\eta\colon F\ra G$ since $p_{F,G}$ is given by $p_{F,G}(y)=\eta_{x}(y)$ for $y\in F(x)=p_{F}^{-1}(x)$. Thus $\mathscr{M}\circ \mathscr{S}=Id$.

Suppose $\pyx$ is a semicovering of $X$ and $K=\mathscr{M}p$. We have $\tilde{X}_{K}=\bigcup_{x\in X}K(x)=\bigcup_{x\in X}p^{-1}(x)=Y$ as sets and thus $\mathscr{S}(K)=p_{K}=p$ as functions. To see that the topologies of $Y$ and $\txf$ agree consider the following diagram:\[\xymatrix{ \coprod_{x\in X}p^{-1}(x)\times \pxx \ar[d]_-{\coprod_{y}L_p} \ar[rr]^-{\Theta_{K}} && \tilde{X}_{K} \ar[d]^-{id} \\ \coprod_{y\in Y}\pyy \ar[rr]_-{\coprod_{y} ev_{1}} && Y}\] The left vertical map takes $(y,\alpha)\in p^{-1}(x)\times \pxx$ to the lift $\tilde{\alpha}_{y}\in \pyy$ and is a homeomorphism since $p$ has continuous lifting of paths. The bottom horizontal map is evaluation at $1$ on each summand and is quotient by Propositions \ref{liftingproperties} and \ref{quotienteval}. Since $\Theta_{K}$ is quotient by definition, $id\colon\tilde{X}_{K}\ra Y$ is continuous and open. One could equally have chosen to apply Lemma \ref{generalliftinglemma} to draw this conclusion.

Finally, suppose $p'\colon Y'\ra X$ is another semicovering of $X$ and $f\colon Y\ra Y'$ is a map such that $p'\circ f=p$. Let $K=\mathscr{M}p$, $K'=\mathscr{M}p$, and $\eta=\mathscr{M}f\colon K\ra K'$ so that $\eta_{x}\colon p^{-1}(x)\ra (p')^{-1}(x)$ is the restriction of $f$. The covering morphism $\mathscr{S}(\eta)=p_{K,K'}\colon Y\ra Y'$ is given by $p_{K,K'}(y)=\eta_{x}(y)=f(y)$. Thus $\mathscr{S}\circ \mathscr{M}=Id$.
\subsection{Open covering morphisms} 
It is well-known that the category of covering morphisms $\mathbf{CovMor}(\sG)$ of a connected groupoid $\sG$ (connected in the categorical sense that $\sG(x_1,x_2)\neq\emptyset$ for all $x_1,x_2\in Ob(\sG)$) is naturally equivalent to $\mathbf{Func}(\sG,\set)$ which is often referred to as the category of representations of $\sG$ \cite[Prop. 30]{Hig71}. In particular, a covering morphism $F\colon\sH\ra \sG$ corresponds to the functor $\mathscr{R}F\colon\sG\ra \set$ given by $\mathscr{R}F(x)=Ob(F)^{-1}(x)$ for $x\in Ob(\sG)$ and for $g\in \sG(x_1,x_2)$, $\mathscr{R}F(g)$ is the function $Ob(F)^{-1}(x_1)\ra Ob(F)^{-1}(x_2)$, $y\mapsto t_{\sH}(\tilde{g}_{y})$ where $t_{\sH}\colon\sH\ra Ob(\sH)$ is the target map of $\sH$. 

For a connected (in the categorical sense) $\spaces$-groupoid $\sG$, let $\mathbf{OCovMor}(\sG)$ be the category of open covering morphisms of $\sG$, that is, open $\spaces$-functors $\sH\ra \sG$ whose underlying functors are covering morphisms. In this enriched setting the equivalence $\mathbf{CovMor}(\sG)\simeq \mathbf{Func}(\sG,\set)$ restricts to an equivalence \[\mathscr{R}\colon\mathbf{OCovMor}(\sG)\ra \mathbf{TopFunc}(\sG,\set).\] This is straightforward given the observation that a subbasis set for the topology of \[\set\left(Ob(F)^{-1}(x_1), Ob(F)^{-1}(x_2)\right)\] is of the form $\langle \{y_1\},\{y_2\}\rangle$ and $\mathscr{R}F\colon\sG(x_1,x_2)\ra \set(Ob(F)^{-1}(x_1),Ob(F)^{-1}(x_2))$ is continuous precisely when \[\mathscr{R}F^{-1}\left(\langle \{y_1\},\{y_2\}\rangle\right)=\{g\in \sG(x_1,x_2)|t_{\sH}(\tilde{g}_{y_1})=y_2\}=Im\left(F\colon\sH(y_1,y_2)\ra \sG(x_1,x_2)\right)\] is open in $\sG(x_1,x_2)$. 

By Theorem \ref{embedding} for any space $X$, there is a functor $\pi^{\tau}_{!}\colon\scovx\ra \mathbf{OCovMor}(\pi^{\tau}X)$, $p\mapsto\pi^{\tau}p$. Notice that $\mathscr{R}\circ \pi^{\tau}_{!}=\mathscr{M}$ is given by monodromy. Thus for locally-wep connected $X$, we obtain an alternative classification of semicoverings in terms of open coverings morphisms.
\begin{theorem}
For a locally wep-connected space $X$, $\pi^{\tau}_{!}\colon\scovx\ra \mathbf{OCovMor}(\pi^{\tau}X)$ is a natural equivalence of categories.
\end{theorem}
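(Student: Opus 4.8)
The plan is to deduce the theorem formally from the classification already in hand (Theorem \ref{classificationI}), the equivalence $\mathscr{R}$, and the strict factorization $\mathscr{R}\circ \pi^{\tau}_{!}=\mathscr{M}$ recorded just above, by invoking the two-out-of-three property of equivalences of categories. First I would observe that a locally wep-connected space is in particular wep-connected, hence path connected, so that $\pi^{\tau}X$ is connected in the categorical sense; this is exactly what is needed for the equivalence $\mathscr{R}\colon\mathbf{OCovMor}(\pi^{\tau}X)\ra \mathbf{TopFunc}(\pi^{\tau}X,\set)$ of the preceding discussion to be available.

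Next I would assemble the two inputs. By Theorem \ref{classificationI}, monodromy $\mathscr{M}\colon\scovx\ra \mathbf{TopFunc}(\pi^{\tau}X,\set)$ is an isomorphism of categories, and in particular an equivalence. By the discussion preceding the statement, $\mathscr{R}$ is an equivalence and the triangle commutes on the nose: $\mathscr{R}\circ \pi^{\tau}_{!}=\mathscr{M}$.

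The core of the argument is the elementary fact that equivalences satisfy two-out-of-three. I would make this explicit by fixing a quasi-inverse $\mathscr{R}'$ of $\mathscr{R}$, so that $\mathscr{R}'\circ \mathscr{R}$ is naturally isomorphic to the identity functor on $\mathbf{OCovMor}(\pi^{\tau}X)$. Whiskering this natural isomorphism with $\pi^{\tau}_{!}$ yields $\pi^{\tau}_{!}\cong \mathscr{R}'\circ \mathscr{R}\circ \pi^{\tau}_{!}=\mathscr{R}'\circ \mathscr{M}$. Since $\mathscr{R}'$ and $\mathscr{M}$ are both equivalences, so is their composite $\mathscr{R}'\circ \mathscr{M}$; being naturally isomorphic to an equivalence, $\pi^{\tau}_{!}$ is itself an equivalence.

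Since the reasoning is purely formal, I do not expect a genuine obstacle: the only points requiring care are the bookkeeping facts that $\pi^{\tau}X$ is categorically connected (so $\mathscr{R}$ exists) and that $\mathscr{R}\circ \pi^{\tau}_{!}=\mathscr{M}$ holds as a strict equality rather than merely up to isomorphism, both of which are already in place. If one prefers a more constructive conclusion matching the explicit inverse $\mathscr{S}$ built for Theorem \ref{classificationI}, one may instead transport $\mathscr{S}$ and a quasi-inverse of $\mathscr{R}$ to exhibit an explicit quasi-inverse of $\pi^{\tau}_{!}$; but the two-out-of-three formulation is the most economical route.
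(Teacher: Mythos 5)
Your proposal is correct and follows essentially the same route as the paper, which deduces the theorem from exactly the ingredients you cite: the strict factorization $\mathscr{R}\circ \pi^{\tau}_{!}=\mathscr{M}$, the isomorphism $\mathscr{M}\colon\scovx\ra \mathbf{TopFunc}(\pi^{\tau}X,\set)$ of Theorem \ref{classificationI}, and the equivalence $\mathscr{R}\colon\mathbf{OCovMor}(\pi^{\tau}X)\ra \mathbf{TopFunc}(\pi^{\tau}X,\set)$ available because $\pi^{\tau}X$ is categorically connected. Your explicit two-out-of-three bookkeeping with a quasi-inverse $\mathscr{R}'$ merely spells out what the paper leaves implicit.
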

\begin{corollary}
If $X$ is locally wep-connected and $F\colon\sH\ra \pi X$ is a covering morphism such that each connected component of $\sH$ contains an object $y$ such that $F(\sH(y))$ is an open subgroup of $\pi_{1}^{\tau}(X,F(y))$, then the topology of $X$ lifts to a topology on $Ob(\sH)$ such that $Ob(F)$ is a semicovering map and $\sH\cong \pi^{\tau} Ob(\sH)$ as $\spaces$-groupoids when $\sH(y_1,y_2)$ is viewed as a subspace of $\pi^{\tau}(F(y_1),F(y_2))$.
\end{corollary}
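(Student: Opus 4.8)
The plan is to recognize $\sH$ as the fundamental $\spaces$-groupoid of a semicovering space produced by the classification theorem. Let $\mathscr{R}F\colon\pi X\ra\set$ be the functor associated to the covering morphism $F$ as in Section 5.3, so that $\mathscr{R}F(x)=Ob(F)^{-1}(x)$ and $\mathscr{R}F([\alpha])$ sends $y$ to the target of the unique lift $\tilde{g}_{y}$ of $g=[\alpha]$. First I would promote $\mathscr{R}F$ to a $\spaces$-functor $\pi^{\tau}X\ra\set$. By the criterion recorded just before the statement, this is equivalent to showing that for every pair $y_1,y_2\in Ob(\sH)$ with $F(y_i)=x_i$, the image $F\big(\sH(y_1,y_2)\big)$ is open in $\pi^{\tau}X(x_1,x_2)$; for $y_1,y_2$ in distinct components of $\sH$ this image is empty, hence open.

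The heart of the argument is to spread the single openness hypothesis across a whole component of $\sH$. Fix $y_1,y_2$ in a common component and let $y$, $x=F(y)$, be the object supplied by the hypothesis, so that $F(\sH(y))$ is open in $\pi_{1}^{\tau}(X,x)=\pi^{\tau}X(x,x)$. Choosing morphisms $a\in\sH(y,y_1)$ and $b\in\sH(y,y_2)$ (available since the objects are connected in $\sH$), the assignment $h\mapsto ahb^{-1}$ is a bijection $\sH(y_1,y_2)\ra\sH(y)$, and applying the functor $F$ gives \[F\big(\sH(y_1,y_2)\big)=F(a)^{-1}\,F(\sH(y))\,F(b)=\rho_{F(b)}\Big(\lambda_{F(a)^{-1}}\big(F(\sH(y))\big)\Big).\] Because $\pi^{\tau}X$ is a genuine $\spaces$-groupoid, the translations $\lambda_{F(a)^{-1}}$ and $\rho_{F(b)}$ are homeomorphisms between the relevant hom-spaces, so they carry the open set $F(\sH(y))$ onto an open subset of $\pi^{\tau}X(x_1,x_2)$. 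This establishes that $\mathscr{R}F$ is a $\spaces$-functor.

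With $\mathscr{R}F$ a $\spaces$-functor, Theorem \ref{classificationI} (equivalently the existence result Theorem \ref{upl}) provides the semicovering $p=\mathscr{S}(\mathscr{R}F)=p_{\mathscr{R}F}\colon\tilde{X}_{\mathscr{R}F}\ra X$. Since $\tilde{X}_{\mathscr{R}F}=\bigcup_{x}\mathscr{R}F(x)=Ob(\sH)$ and $p_{\mathscr{R}F}=Ob(F)$ as functions, this is exactly the desired lift of the topology of $X$ to $Ob(\sH)$ making $Ob(F)$ a semicovering (the degenerate case of empty fibers gives the empty semicovering). To finish, set $Y=Ob(\sH)$ with this topology. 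Because $\mathscr{M}p=\mathscr{R}F$ by the identity $\mathscr{M}\circ\mathscr{S}=Id$, the covering morphisms $\pi p\colon\pi Y\ra\pi X$ and $F\colon\sH\ra\pi X$ are associated to the same functor, so the classical equivalence $\mathbf{CovMor}(\pi X)\simeq\mathbf{Func}(\pi X,\set)$ yields a groupoid isomorphism $\Phi\colon\pi Y\ra\sH$ that is the identity on objects and satisfies $F\circ\Phi=\pi p$. I would then upgrade $\Phi$ on hom-sets: on $\pi^{\tau}Y(y_1,y_2)$ the composite $F\circ\Phi$ equals $\pi^{\tau}p$, which by Theorem \ref{embedding} is an open embedding onto $F(\sH(y_1,y_2))$; since $\sH(y_1,y_2)$ carries precisely the subspace topology of this image under $F$, the map $\Phi=F^{-1}\circ\pi^{\tau}p$ is a homeomorphism on each hom-set. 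Hence $\Phi$ is an isomorphism of $\spaces$-groupoids and $\sH\cong\pi^{\tau}Ob(\sH)$.

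The main obstacle is the openness step of the second paragraph: the hypothesis only controls one vertex group per component, and propagating openness to every hom-set relies essentially on $\pi^{\tau}X$ being a $\spaces$-groupoid rather than merely a $\mathbf{qTop}$-groupoid, since only then are the conjugating translations homeomorphisms.
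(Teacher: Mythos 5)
Your proof is correct and is essentially the argument the paper intends: the corollary is stated in Section 5.3 as an unproved consequence of the equivalence $\mathscr{R}\colon\mathbf{OCovMor}(\pi^{\tau}X)\ra \mathbf{TopFunc}(\pi^{\tau}X,\set)$, and your steps---propagating openness of $F(\sH(y))$ to every hom-set by translations, using the openness criterion to make $\mathscr{R}F$ a $\spaces$-functor, constructing $p_{\mathscr{R}F}$ via the classification theorem, and invoking Theorem \ref{embedding} to upgrade the identity-on-objects groupoid isomorphism $\pi\,Ob(\sH)\cong\sH$ to a homeomorphism on hom-sets---constitute exactly that derivation. One small correction to your closing commentary: translations are homeomorphisms in any $\mathbf{qTop}$-groupoid (the paper notes that separate continuity of composition is equivalent to all translations being homeomorphisms), so this step does not actually depend on the jointly continuous ($\spaces$-groupoid) structure of $\pi^{\tau}X$.
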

\subsection{Semicoverings and $G$-sets}
Let $G$ be a topological group and $F$ be a set with the discrete topology. A right action of $G$ on $F$ is a continuous group action $F\times G\ra F$. For each $x\in F$, the restriction $G\ra xG=\{xg\in F|g\in G\}$ of the action induces a continuous bijection from the quotient right coset space $H\bs G$ (where $H$ is the stabilizer at $x$) to $xG$. Thus continuity of the action of $G$ on discrete $F$ is equivalent to all stabilizer subgroups being open in $G$. A \textit{right} $\textit{G}$-\textit{set} is a discrete set $F$ with a right action of $G$. This agrees with the usual notion of right $G$-set when $G$ is discrete. The category of right $G$-sets and $G$-maps $\psi\colon F_1\ra F_2$ is denoted $G\mathbf{Set}$. Note that $G\mathbf{Set}$ is naturally equivalent to $\mathbf{TopFunc}(G,\set)$ when $G$ is viewed as a $\spaces$-groupoid with a single object.

A right $G$-set $F$ is \textit{transitive} if $F=xG$ for some $x\in F$. In this case, $F$ may be identified in $G\mathbf{Set}$ with the discrete right coset space $H\bs G=\{Hg|g\in G\}$ where again $H$ is the stabilizer at $x$. Define the \textit{orbit category} of the topological group $G$ to be the full subcategory $\mathscr{O}_{G}$ of $G\mathbf{Set}$ generated by transitive right $G$-sets. Note that $\mathscr{O}_{G}$ is equivalent to the full subcategory of $G\mathbf{Set}$ generated by $G$-sets $H\bs G$ where $H$ is an open subgroup of $G$. The well-known theory for discrete groups extends to the non-discrete case giving that the objects of $\mathscr{O}_{G}$ correspond to open subgroups of $G$ and isomorphism classes correspond to conjugacy classes of open subgroups of $G$.
\begin{theorem} \label{ClassificationII} Let $X$ be a locally wep-connected space and $x_0\in X$.
\begin{enumerate} 
\item There is an equivalence $\scovx\ra \pi_{1}^{\tau}(X,x_0)\mathbf{Set}$ of categories taking a semicovering $\pyx$ to the fiber $p^{-1}(x_0)$ with action $p^{-1}(x_0)\times \pi_{1}^{\tau}(X,x_0)\ra p^{-1}(x_0)$, $(y,[\alpha])\mapsto y\cdot [\alpha]$ is an equivalence of categories.
\item The equivalence in 1. restricts to an equivalence of categories $\cscovx\simeq\mathscr{O}_{\pi_{1}^{\tau}(X,x_0)}$.
\end{enumerate}
\end{theorem}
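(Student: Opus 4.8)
The plan is to deduce both statements from the main classification (Theorem \ref{classificationI}) together with the classical equivalence between representations of a connected groupoid and representations of a single vertex group, carried out in the $\spaces$-enriched setting. Since $X$ is locally wep-connected it is in particular path connected, so $\pi^{\tau}X$ is a connected $\spaces$-groupoid. Theorem \ref{classificationI} identifies $\scovx$ with $\mathbf{TopFunc}(\pi^{\tau}X,\set)$ via monodromy $\mathscr{M}$, and $\pitx\mathbf{Set}$ was already observed to be naturally equivalent to $\mathbf{TopFunc}(\pitx,\set)$, where $\pitx$ is regarded as a one-object $\spaces$-groupoid. Thus it suffices to produce a natural equivalence
\[ R\colon\mathbf{TopFunc}(\pi^{\tau}X,\set)\ra\mathbf{TopFunc}(\pitx,\set) \]
and to check that the composite $R\circ\mathscr{M}$ is exactly the assignment $p\mapsto p^{-1}(x_0)$ with the stated action.

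First I would define $R$ by restriction to the vertex at $x_0$: a $\spaces$-functor $F\colon\pi^{\tau}X\ra\set$ is sent to the $\pitx$-set $F(x_0)$ with action $y\cdot[\alpha]=F([\alpha])(y)$ for $[\alpha]\in\pi^{\tau}X(x_0,x_0)$. This is a genuine object of $\pitx\mathbf{Set}$: continuity of $F$ on the hom-space $\pi^{\tau}X(x_0,x_0)$ forces the preimage of each subbasic set $\langle\{y\},\{y'\}\rangle$ to be open, and taking $y'=y$ shows every stabilizer is open, which is precisely the condition (recorded in Section 5.4) that the action on a discrete set be continuous. Because $\mathscr{M}p(x_0)=p^{-1}(x_0)$ with $[\alpha]$ acting by $y\mapsto y\cdot[\alpha]$, the composite $R\circ\mathscr{M}$ is exactly the functor named in part 1.

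Next I would build a quasi-inverse $E$. Choose, for each $x\in X$, a morphism $\lambda_{x}\in\pi^{\tau}X(x_0,x)$ with $\lambda_{x_0}$ the identity; such choices exist by path connectedness. Given a $\pitx$-set $S$, let $E(S)$ be the functor with $E(S)(x)=S$ on objects and, for $g\in\pi^{\tau}X(x_1,x_2)$, with $E(S)(g)$ the action of the loop class $\lambda_{x_1}\,g\,\lambda_{x_2}^{-1}\in\pitx$. Functoriality and the natural isomorphisms $R\circ E\cong\mathrm{Id}$ and $E\circ R\cong\mathrm{Id}$ (the latter with components assembled from the $\lambda_x$) are the standard groupoid-representation bookkeeping. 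The one genuinely enriched point, and the step I expect to be the main obstacle, is that $E(S)$ is a $\spaces$-functor, i.e. that each $\pi^{\tau}X(x_1,x_2)\ra\set(S,S)$ is continuous. I would reduce this to the defining feature of $\tau$: in $\pi^{\tau}X$ the translations between hom-spaces are homeomorphisms, so $g\mapsto\lambda_{x_1}g\lambda_{x_2}^{-1}$ is a homeomorphism $\pi^{\tau}X(x_1,x_2)\ra\pitx$, and $E(S)$ on $\pi^{\tau}X(x_1,x_2)$ factors through it followed by the continuous action map $\pitx\ra\set(S,S)$. Verifying that $R$ and $E$ are $\spaces$-natural and mutually quasi-inverse then completes part 1.

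For part 2, I would show that a non-empty semicovering $\pyx$ has $Y$ path connected if and only if the $\pitx$-set $p^{-1}(x_0)$ is transitive. If $Y$ is path connected, any two points of $p^{-1}(x_0)$ are joined by a path in $Y$ whose projection is a loop $\gamma$ at $x_0$ realizing $y_1\cdot[\gamma]=y_2$, so the action is transitive. Conversely, given transitivity, every $y\in Y$ lies in the path component of a fiber point: lift a path in $X$ from $p(y)$ to $x_0$ starting at $y$ (using path connectedness of $X$ and the path-lifting of $p$) to reach $p^{-1}(x_0)$, and transitivity then connects all fiber points, so $Y$ is path connected. By Remark \ref{emptysemicov} a connected semicovering is non-empty and surjective, and the objects of $\mathscr{O}_{\pitx}$ are precisely the (necessarily non-empty) transitive $\pitx$-sets. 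Since $\cscovx$ and $\mathscr{O}_{\pitx}$ are the full subcategories of $\scovx$ and $\pitx\mathbf{Set}$ cut out by these corresponding object-classes, the equivalence of part 1 restricts to the desired equivalence $\cscovx\simeq\mathscr{O}_{\pitx}$.
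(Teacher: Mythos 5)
Your proposal is correct and takes essentially the same route as the paper: the paper's entire proof of part 1 is the observation that the inclusion $\pitx\hookrightarrow \pi^{\tau}X$ is a $\spaces$-equivalence (because $X$ is path connected and translations in $\pi^{\tau}X$ are homeomorphisms), composed with Theorem \ref{classificationI} and the identification $\mathbf{TopFunc}(\pitx,\set)\simeq \pitx\mathbf{Set}$, and your functors $R$ and $E$ are exactly the standard explicit realization of that enriched equivalence, with the continuity of $E(S)$ resting on the same translation-homeomorphism fact. Part 2 likewise matches: the paper invokes, and you prove, the characterization that $Y$ is path connected if and only if the action of $\pitx$ on $p^{-1}(x_0)$ is transitive.
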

\begin{proof}
1. The inclusion $\pi_{1}^{\tau}(X,x_0)\to \pi^{\tau} X$ is a $\spaces$-equivalence since $X$ is path connected and translations in $\pit X$ are continuous. By Theorem \ref{classificationI}, Monodromy gives the first isomorphism in \[\scovx\cong\mathbf{TopFunc}(\pi^{\tau}X,\set)\simeq \mathbf{TopFunc}(\pi_{1}^{\tau}(X,x_0),\set)\simeq \pi_{1}^{\tau}(X,x_0)\mathbf{Set}.\]
2. The equivalence from 1. restricts to an equivalence $\cscovx\simeq \mathscr{O}_{\pi_{1}^{\tau}(X,x_0)}$ due to the fact that given a semicovering $\pyx$, $Y$ is path connected if and only if the action $p^{-1}(x_0)\times \pi_{1}^{\tau}(X,x_0)\ra p^{-1}(x_0)$ is transitive.
\end{proof}
\begin{corollary}
Let $X$ be a locally wep-connected space and $x_0\in X$. There is a Galois correspondence between the equivalence classes of connected semicoverings of $X$ and conjugacy classes of open subgroups of $\pi_{1}^{\tau}(X,x_0)$.
\end{corollary}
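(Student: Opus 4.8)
The plan is to deduce this directly from part 2 of Theorem \ref{ClassificationII} together with the orbit-category bookkeeping recorded in the paragraph preceding it. Recall that Theorem \ref{ClassificationII}(2) supplies an equivalence of categories $\cscovx \simeq \mathscr{O}_{\pitx}$. Since any equivalence of categories induces a bijection between isomorphism classes of objects on either side, the first step is to observe that equivalence classes of connected semicoverings of $X$ are in bijection with isomorphism classes of objects of the orbit category $\mathscr{O}_{\pitx}$.

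Next I would invoke the identification of these isomorphism classes with conjugacy classes of open subgroups. As noted before Theorem \ref{ClassificationII}, $\mathscr{O}_{\pitx}$ is equivalent to the full subcategory of $\pitx\mathbf{Set}$ generated by the coset $G$-sets $H\backslash \pitx$ with $H$ an open subgroup, and two such objects $H_1\backslash \pitx$ and $H_2\backslash \pitx$ are isomorphic in $\pitx\mathbf{Set}$ precisely when $H_1$ and $H_2$ are conjugate in $\pitx$. Composing this with the bijection from the previous step yields the asserted correspondence between equivalence classes of connected semicoverings and conjugacy classes of open subgroups.

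To justify the name ``Galois correspondence,'' I would finally check that the bijection is compatible with the natural preorders on each side. On the semicovering side, declare $p \leq p'$ whenever there is a morphism of connected semicoverings $Y \to Y'$ over $X$; under the equivalence such a morphism corresponds to a $\pitx$-map $H_1\backslash \pitx \to H_2\backslash \pitx$, and a short check of well-definedness shows that such a $G$-map exists exactly when $H_1$ is subconjugate to $H_2$ (that is, $H_1 \subseteq g H_2 g^{-1}$ for some $g$). Thus the correspondence intertwines the morphism preorder on connected semicoverings with the subconjugacy preorder on open subgroups; in particular a universal semicovering, being initial in $\cscovx$, corresponds to the trivial subgroup. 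I expect no genuine obstacle here: all of the substantive work already resides in Theorem \ref{ClassificationII} and the preceding orbit-category discussion, so the only care needed is the routine verification that the equivalence respects the two order structures.
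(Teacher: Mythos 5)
Your argument follows exactly the route the paper intends: the corollary is stated without proof, as an immediate consequence of Theorem \ref{ClassificationII}(2) together with the observation, made just before that theorem, that isomorphism classes of objects in the orbit category $\mathscr{O}_{G}$ correspond to conjugacy classes of open subgroups of $G$. Your first two paragraphs reproduce precisely that reasoning, and the preorder-compatibility check in your third paragraph is a reasonable (and essentially correct) elaboration of what ``Galois correspondence'' means. One side remark is wrong, however: a universal semicovering does \emph{not} in general correspond to the trivial subgroup of $\pi_{1}^{\tau}(X,x_0)$. Under the correspondence, connected semicoverings correspond only to \emph{open} subgroups, and the trivial subgroup need not be open; when it is not, it corresponds to no semicovering at all. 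As the paper's final corollary records, a universal semicovering corresponds instead to an open subgroup $S$ that is subconjugate to every open subgroup --- that is, minimal in your subconjugacy preorder --- and such an $S$ can be nontrivial. This does not affect the correspondence itself, but the ``in particular'' claim should be deleted or corrected.
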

\begin{corollary}
If $X$ is {\estc} and $x_0\in X$, then $X$ has a universal semicovering if and only if there is an open subgroup $S$ in $\pi_{1}^{\tau}(X,x_0)$ such that for any other open subgroup $H$ of $\pi_{1}^{\tau}(X,x_0)$, there is a $g\in \pi_{1}^{\tau}(X,x_0)$ such that $gSg^{-1}\subseteq H$.
\end{corollary}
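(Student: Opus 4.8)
Set $G=\pitx$. The plan is to move the whole question into the orbit category $\mathscr{O}_{G}$ by means of Theorem \ref{ClassificationII}. Part (2) of that theorem supplies an equivalence $\cscovx\simeq \mathscr{O}_{G}$ that sends a connected semicovering $\pyx$ with $p(y_0)=x_0$ to its fiber $p^{-1}(x_0)$, regarded as the transitive right $G$-set $S\bs G$, where $S=Im\left(\pit p\colon\pit(Y,y_0)\ra G\right)$ is the stabilizer of $y_0$ (by 3.\ of Lemma \ref{contliftingprops}). By the open-embedding corollary of Theorem \ref{embedding}, $\pit p$ is an open embedding, so $S$ is an open subgroup of $G$; conversely every open subgroup $H\leq G$ arises this way, since $H\bs G$ is an object of $\mathscr{O}_{G}$ and, as noted at the start of this subsection, all point stabilizers of a $G$-set are open. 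Under the equivalence a universal semicovering corresponds to an initial object of $\mathscr{O}_{G}$; the feature I will actually use is that such an object $S\bs G$ must admit a morphism to every object $H\bs G$, and conversely that any object with this property is universal.

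The key computation is therefore the description of the hom-sets of $\mathscr{O}_{G}$. Given open subgroups $S,H\leq G$, a $G$-map $\psi\colon S\bs G\ra H\bs G$ is determined by its value $\psi(S)=Hg$ on the identity coset, and $G$-equivariance forces $\psi(Sg')=Hgg'$ for every $g'$. This assignment is well defined exactly when $Sg'=Sg''$ implies $Hgg'=Hgg''$, i.e.\ when $gSg^{-1}\subseteq H$. Hence \[\mathscr{O}_{G}(S\bs G,\,H\bs G)\neq\emptyset\iff \exists\, g\in G\text{ with }gSg^{-1}\subseteq H.\]

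Combining the two steps, $\mathscr{O}_{G}$ has a universal object $S\bs G$ if and only if there is an open subgroup $S\leq G$ such that for every open subgroup $H\leq G$ there is a $g\in G$ with $gSg^{-1}\subseteq H$. Transporting back along the equivalence of Theorem \ref{ClassificationII}(2) then yields exactly the stated criterion for the existence of a universal semicovering of $X$, with $S=Im\left(\pit p\colon\pit(\tilde{X},\tilde{x}_{0})\ra G\right)$ the image subgroup of the universal semicovering $p\colon\tilde{X}\ra X$. The hypothesis that $X$ be {\estc} enters only to make Theorem \ref{ClassificationII} available.

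I expect the main obstacle to be bookkeeping rather than any deep difficulty. The direction of the inclusion $gSg^{-1}\subseteq H$ is sensitive to the left/right coset conventions, and one must keep the fiber modeled as the right coset space $S\bs G$ with the right $G$-action fixed in this subsection in order for the inclusion to come out as written. A second point to state carefully is the operative universal property: what matters is the \emph{existence} of a morphism $S\bs G\ra H\bs G$ for each open $H$, which is precisely what the displayed hom-set computation controls and which corresponds classically to a connected semicovering that covers every other connected semicovering of $X$.
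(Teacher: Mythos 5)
Your proposal is correct and is essentially the argument the paper intends: the corollary appears with no written proof as an immediate consequence of Theorem \ref{ClassificationII}(2), and your translation into the orbit category $\mathscr{O}_{G}$ together with the hom-set computation $\mathscr{O}_{G}(S\bs G,H\bs G)\neq\emptyset$ if and only if $gSg^{-1}\subseteq H$ for some $g\in G$ is exactly the deduction that is being left implicit. Your decision to read the universal property as weak initiality (existence of a morphism to every connected semicovering) is also the right one --- under strict initiality the statement would already fail for $X=S^{1}$, where deck transformations prevent uniqueness of morphisms out of $\mathbb{R}$ --- so flagging and resolving that convention matches the paper's evident intent.
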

\end{document}